\newtheorem*{rep@theorem}{\rep@title}
\newcommand{\newreptheorem}[2]{%
\newenvironment{rep#1}[1]{%
 \def\rep@title{#2 \ref{##1}}%
 \begin{rep@theorem}}%
 {\end{rep@theorem}}}
\newtheorem{lemma}{Lemma}[section]
\newtheorem{thm}[lemma]{Theorem}
\newtheorem{prop}[lemma]{Proposition}
\newtheorem{cor}[lemma]{Corollary}
\theoremstyle{definition}
\newtheorem{defi}[lemma]{Definition}
\newtheorem{quest}[lemma]{Question}
\newtheorem{example}[lemma]{Example}
\newtheorem{rem}[lemma]{Remark}
\newtheorem{claim}{Claim}
\theoremstyle{definition}
\newcommand\norm{\bBigg@{0.8}}
 \newcommand{\indnorm}[2][flex]{\csname #1l\endcsname\|#2%
                                 \csname #1r\endcsname\|\mathclose{}}
                                  \newcommand{\indnorml}[4][flex]{\csname #1l\endcsname\|#2%
                                 \csname #1r\endcsname\|_{#3}^{#4}\mathclose{}}
\newcommand{\sv}[2][flex]{\indnorm[#1]{#2}}
\newcommand{\genrel}[3][flex]{\csname #1l\endcsname\langle #2 \mathbin{\csname #1m\endcsname|} #3\csname #1r\endcsname\rangle}
\def\quand{\quad\text{and}\quad}
\DeclareMathOperator{\GL}{GL}
\DeclareMathOperator{\sgn}{sgn}
\DeclareMathOperator{\eu}{eu}
\def\eub#1{%
  \eu_b^{#1}}
\DeclareMathOperator{\orc}{or}
\DeclareMathOperator{\ubc}{UBC}
\DeclareMathOperator{\comp}{comp}
\DeclareMathOperator{\im}{im}
\DeclareMathOperator{\Aut}{Aut}
\DeclareMathOperator{\id}{id}
\DeclareMathOperator{\Homeo}{Homeo}
\newcommand{\HH}{\operatorname{H}}
\newcommand{\CC}{\operatorname{C}}
\DeclareMathOperator{\alt}{alt}
\def\linf{\ell^{\infty}}
\def\linfalt{\linf_{\alt}}
\newcommand{\qand}{%
  \qquad\text{and}\qquad}
\newcommand{\N}{\ensuremath {\mathbb{N}}}
\newcommand{\R} {\ensuremath {\mathbb{R}}}
\newcommand{\C} {\ensuremath {\mathbb{C}}}
\newcommand{\Q} {\ensuremath {\mathbb{Q}}}
\newcommand{\Z} {\ensuremath {\mathbb{Z}}}
\newcommand{\Fp} {\ensuremath {\mathbb{F}_p}}
\newcommand{\K} {\ensuremath {\mathbb{K}}}
\renewcommand{\rho}{\varrho}
\def\phi{\varphi}
\def\args{\;\cdot\;}
\def\actson{\curvearrowright}
\long\def\forget#1{}
\begin{document}

\title{Bounded cohomology and binate groups}

\author[]{Francesco Fournier-Facio}
\address{Department of Mathematics, ETH Z\"urich, Z\"urich, Switzerland}
\email{francesco.fournier@math.ethz.ch}

\author[]{Clara L\"oh}
\address{Fakult\"{a}t f\"{u}r Mathematik, Universit\"{a}t Regensburg, Regensburg, Germany}
\email{clara.loeh@ur.de}

\author[]{Marco Moraschini}
\address{Department of Mathematics, University of Bologna, Bologna, Italy}
\email{marco.moraschini2@unibo.it}

\thanks{}

\keywords{bounded cohomology, boundedly acyclic groups, binate groups, pseudo-mitotic groups, Thompson groups}
\subjclass[2020]{Primary: 18G90}
\date{\today.\ 
  Clara L\"oh and Marco Moraschini have been supported by the CRC~1085 \emph{Higher Invariants}
  (Universit\"at Regensburg, funded by the~DFG).
  The results in this paper are part of Francesco Fournier-Facio's PhD project}
  
\maketitle

\begin{abstract}
A group is boundedly acyclic if its bounded cohomology with
trivial real coefficients vanishes in all positive degrees. Amenable
groups are boundedly acyclic, while the first non-amenable examples
were the group of compactly supported homeomorphisms of~$\R^n$
(Matsu\-moto--Morita) and mitotic groups (L\"oh). 
We prove that binate (alias pseudo-mitotic) groups are boundedly
acyclic, which provides a unifying approach to the aforementioned
results.  Moreover, we show that binate groups are universally
boundedly acyclic.

We obtain several new examples of boundedly acyclic groups as well as
computations of the bounded cohomology of certain groups acting on the
circle.  In particular, we discuss how these results suggest that the
bounded cohomology of the Thompson groups $F$, $T$, and $V$ is
as simple as possible. 
\end{abstract}

\section{Introduction}

Bounded cohomology is defined via the topological dual of the
simplicial resolution. It was introduced by Johnson and Trauber in the
context of Banach algebras~\cite{Johnson}, then extended by Gromov to
topological spaces~\cite{vbc}.  Since then it has become a fundamental
tool in several fields, including the geometry of manifolds~\cite{vbc},
rigidity theory~\cite{rigidity}, the dynamics of circle
actions~\cite{ghys}, and stable commutator length~\cite{calegari}. 

Despite a good understanding in degree $2$ and a partial understanding
in degree $3$, the full bounded cohomology of a group seems to be hard
to compute~\cite[Section~7]{our}. Therefore, it is fundamental to
produce alternative resolutions that compute the bounded cohomology of
a group.  In this respect, amenable groups played a fundamental role
in the approach of Ivanov~\cite{ivanov}. One can also exploit a larger
class of groups for computing bounded cohomology, namely the class of
\emph{boundedly acyclic groups}~\cite{BAc,Ivanov_bac_covers}:

\begin{defi}\label{defi:bac}
Let $n \geq 1$. A group~$\Gamma$ is said to be \emph{$n$-boundedly
  acyclic} if $\HH^i_b(\Gamma; \R) \cong 0$ for all~$i \in \{1,\dots,n\}$.
The group~$\Gamma$ is \emph{boundedly acyclic} if it is $n$-boundedly
acyclic for every $n \geq 1$.
\end{defi}

Amenable groups are boundedly acyclic~\cite{Johnson,vbc}. The first
non-amenable example, due to Matsumoto and Morita~\cite{MM} is 
the group~$\Homeo_c(\R^n)$ of compactly supported homeomorphisms
of~$\R^n$. Their proof relies on the \emph{acyclicity} of this group,
which for the purposes of this paper will always be intended with
respect to the integers:

\begin{defi}\label{defi:acyclic}
A group $\Gamma$ is said to be \emph{acyclic} if $\HH_n(\Gamma; \Z)
\cong 0$ for all~$n \geq 1$.
\end{defi}

It was shown by Mather that $\Homeo_c(\R^n)$ is acyclic~\cite{Mather},
and thus the proof of bounded acyclicity reduces to the proof of
injectivity of the comparison map from bounded to
ordinary cohomology. The same approach was employed by L\"oh to prove
that \emph{mitotic} groups are boundedly acyclic~\cite{Loeh}.  This
class was introduced by Baumslag, Dyer, and Heller~\cite{BDH} to
produce embedding results into finitely generated acyclic
groups. Bounded acyclicity of mitotic groups, together with
co-amenability of ascending HNN extensions, eventually led to
finitely generated and finitely presented examples of non-amenable
boundedly acyclic groups~\cite{our}.

The two bounded acyclicity results mentioned above are similar in
spirit but independent of one another, since $\Homeo_c(\R^n)$ is
\emph{not} mitotic~\cite{pseudonotmitotic}. On the other hand, there
is a larger framework that includes both $\Homeo_c(\R^n)$ and mitotic
groups: \emph{binate groups} (see Section~\ref{s:binate} for
the definition).  This class was introduced by Berrick~\cite{Berrick} 
and independently by Varadarajan~\cite{Varadarajan},
under the name \emph{pseudo-mitotic}. They proved
that binate groups are acyclic, thus providing a unified approach to
the proofs of Mather and Baumslag--Dyer--Heller, as well as several
new and interesting examples of acyclic groups, mainly among groups of
homeomorphisms (see Section~\ref{ss:examples}).
We adapt this unification to bounded cohomology:

\begin{thm}[Theorem~\ref{thm:pseudo:mit:bac}]\label{thm:binatebac}
All binate groups are boundedly acyclic.
\end{thm}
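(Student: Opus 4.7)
The proof I have in mind follows the induction scheme of L\"oh for mitotic groups, now reformulated entirely in the binate setting. Write $\Gamma$ for the binate group and proceed by induction on $n \geq 1$ to show $\HH^n_b(\Gamma;\R) = 0$. The base case $n=1$ is immediate, since $\HH^1_b(\Gamma;\R) = 0$ holds for any group with trivial real coefficients.

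For the inductive step, fix a bounded cocycle $\alpha \in \ZZ^n_b(\Gamma;\R)$ and an arbitrary finitely generated subgroup $H \leq \Gamma$. I plan to exhibit a bounded cochain $\beta_H \in \CC^{n-1}_b(H;\R)$ with $\delta \beta_H = \alpha|_{H^n}$ and $\|\beta_H\|_\infty \leq c_n \|\alpha\|_\infty$ for a constant $c_n$ depending only on $n$. Invoking the binate structure at $H$, let $\phi \colon H \to \Gamma$ be the homomorphism with $[\phi(H), H] = 1$ and $u \in \Gamma$ such that $u\phi(h)u^{-1} = h\phi(h)$ for all $h \in H$. Three homomorphisms $H \to \Gamma$ play a role: the inclusion $i$, the commuting map $\phi$, and the product $m(h) = h\phi(h)$, which is a homomorphism precisely because $i(H)$ and $\phi(H)$ commute and is conjugate to $\phi$ via $u$.

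At the bounded cochain level, $m^*\alpha - \phi^*\alpha$ is a bounded coboundary, with explicit norm-controlled primitive coming from the standard chain homotopy witnessing triviality of the action of inner automorphisms on bounded cohomology. On the other hand, $m = \mu \circ \Delta$, where $\mu \colon H \times H \to \Gamma$, $(h_1,h_2) \mapsto h_1 \phi(h_2)$, is a homomorphism and $\Delta$ is the diagonal. Applying the Alexander--Whitney/shuffle decomposition to $\mu^*\alpha$ and then pulling back along $\Delta$ expresses $m^*\alpha$ as $i^*\alpha + \phi^*\alpha$ plus a sum of mixed terms of bi-degrees $(p,q)$ with $p+q = n$ and $p,q \geq 1$; each such term is (at the cochain level) a cup product of bounded cochains on $H$ pulled back from $\Gamma$. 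By the inductive hypothesis, each of the lower-degree pulled-back classes on $\Gamma$ is a bounded coboundary with primitive of norm at most $c_{n-1}\|\alpha\|_\infty$, so every mixed term is itself a bounded coboundary on $H$ with primitive of norm controlled by a universal multiple of $\|\alpha\|_\infty$. Equating the two expressions for $m^*\alpha$ isolates $i^*\alpha = \alpha|_H$ as a bounded coboundary and yields $\beta_H$ with the required bound.

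Finally, since $\Gamma$ is the directed union of its finitely generated subgroups and the family $\{\beta_H\}$ is uniformly norm-bounded, a Banach--Alaoglu argument extracts a weak-$*$ cluster point $\beta \in \CC^{n-1}_b(\Gamma;\R)$ with $\delta \beta = \alpha$, completing the induction. The main obstacle is the quantitative character of the whole argument: the classical Berrick--Varadarajan proof of acyclicity produces primitives without any norm control, so each ingredient has to be carried out at the $\ell^\infty$-cochain level with explicit, degree-dependent constants (the inner-automorphism chain homotopy, the shuffle/Alexander--Whitney decomposition, and the inductively supplied primitives for lower-degree pulled-back classes). Ensuring that these constants depend only on $n$ and not on $H$, $\alpha$, or the particular binate data $(\phi_H, u_H)$, is the delicate point that must be handled carefully to make the weak-$*$ assembly step succeed.
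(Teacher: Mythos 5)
There is a genuine gap in the treatment of the ``mixed terms,'' and it sits exactly where the paper's proof has to work hardest. You decompose $m^*\alpha = \Delta^*\mu^*\alpha$ into $i^*\alpha + \phi^*\alpha$ plus bidegree-$(p,q)$ terms with $p,q\geq 1$, and you propose to kill the mixed terms using the inductive hypothesis $\HH^p_b(\Gamma;\R)\cong 0$ for $p<n$. But those terms do not live on $\Gamma$: they are components of a class on $H\times H$, i.e.\ they involve $\HH^p_b(H;\R)$ and $\HH^q_b(H;\R)$ for an \emph{arbitrary} finitely generated subgroup $H\leq\Gamma$ (equivalently, they factor through the subgroups $i(H)$ and $\phi(H)$ of $\Gamma$, not through $\Gamma$ itself). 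Vanishing of $\HH^{<n}_b(\Gamma;\R)$ says nothing about these: $H$ can be a free group, whose bounded cohomology in degrees $2$ and $3$ is continuum-dimensional. This is precisely why the paper's key technical statement (Proposition~\ref{prop:ubc:homo}) is formulated for a \emph{chain} of homomorphisms $H\to H'\to K\to\Gamma\to P$ in which the intermediate map is already known to vanish on homology in degrees $1,\dots,n-1$ and the outer maps satisfy $\ubc$ in degrees $\leq n-1$; these hypotheses are then arranged by iterating the binate structure along a tower $\Gamma_0\leq\Gamma_1\leq\cdots$ of finitely generated subgroups, with the length of the tower depending on $n$. A single application of the binate structure at $H$, as in your sketch, cannot make the mixed terms disappear.

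A second, independent problem is the cochain-level K\"unneth decomposition itself. There is no K\"unneth formula in bounded cohomology, and the dual of the shuffle map does not write an element of $\linf\bigl((H\times H)^{n+1}\bigr)$ as a finite sum of elementary tensors of bounded cochains on the factors; so the assertion that each mixed term ``is a cup product of bounded cochains on $H$'' is not available at the $\ell^\infty$ level. This is why Matsumoto--Morita, L\"oh, and the present paper all dualize: they work with $\ell^1$-chains (genuinely finite sums, for which the Eilenberg--Zilber and Alexander--Whitney maps admit explicit $\ell^1$-norm bounds), prove the uniform boundary condition for $P$ via the tower, and then invoke Theorem~\ref{thm:MM:ubc} together with the Berrick--Varadarajan acyclicity theorem. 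Your overall architecture (the conjugation homotopy controlling $m^*\alpha-\phi^*\alpha$, the factorization of $m$ through $\mu$ and the diagonal, the Banach--Alaoglu assembly over finitely generated subgroups) is a correct skeleton dual to the paper's, but the two points above are where the actual content lies, and as written the proposal does not close them.
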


We remark that in general binate groups are non-amenable, since they
typically contain free subgroups.  However, there are a few exceptions
(Section~\ref{sss:amenable:binate}).

Binate groups reflect enough of group theory to serve as a faithful
testing class for open problems such as the Bass Conjecture, a
modified version of the Baum--Connes Conjecture or the Kervaire
Conjecture~\cite{berrick_dichotomy}. By Theorem~\ref{thm:binatebac},
boundedly acyclic groups also serve
as conjecture testers. For instance, if the Bass Conjecture holds for
all boundedly acyclic groups, then it holds for all groups. This is
especially interesting since amenable groups, which serve as the
prototypical example of boundedly acyclic groups, are known to
satisfy the Bass Conjecture~\cite{Bass}. 

The bounded acyclicity of binate groups is a phenomenon that is not
strictly linked to real coefficients. Indeed we prove:

\begin{thm}[Corollary~\ref{cor:ubac}]
\label{thm:binateubac}
Binate groups are \emph{universally boundedly acyclic}: If
$\Gamma$ is a binate group, then for every complete valued field~$\K$
and every~$n \geq 1$, we have $\HH^n_b(\Gamma; \K) \cong 0$.
\end{thm}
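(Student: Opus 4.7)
The plan is to revisit the proof of Theorem~\ref{thm:binatebac} and observe that it adapts verbatim from $\R$ to an arbitrary complete valued field~$\K$. The corollary label already hints that no substantively new argument is required: one only needs to check that the construction producing a bounding cochain for every bounded cocycle is coefficient-agnostic.

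The expected proof of Theorem~\ref{thm:binatebac} takes a bounded $n$-cocycle $c \in \ZZ^n_b(\Gamma;\R)$ and, using the binate structure, produces a bounded $(n-1)$-cochain $b$ with $\delta b = c$. Binate groups come equipped, for every finitely generated subgroup $H \leq \Gamma$, with a homomorphism $\phi \colon H \to \Gamma$ and an element $u \in \Gamma$ satisfying the binate relation $h = [u,\phi(h)]$ for all $h \in H$. At the level of the bar complex this yields combinatorial identities forcing the restriction of~$c$ to~$H$ to agree, up to an explicit coboundary, with a translate of itself. Exhausting~$\Gamma$ by finitely generated subgroups and telescoping the resulting identities produces a series of bounded cochains whose partial sums, by completeness of~$\K$, converge in the Banach space $\CC^{n-1}_b(\Gamma;\K)$ to the desired primitive.

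The point is that the entire construction is algebraic: the relevant identities live in the group ring $\Z[\Gamma]$ and are therefore valid for cochains with values in any abelian coefficient module. Passing to $\K$-coefficients needs only two features of~$\K$: a norm compatible with the group action (automatic for trivial coefficients in a valued field), and completeness to take the limit of the telescoping series. No step of the argument invokes ordering, averaging, or the archimedean property of~$\R$.

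The main obstacle is therefore bookkeeping: one must verify that the explicit norm estimates in the proof of Theorem~\ref{thm:binatebac} — bounds on the sup-norms of the cochains appearing in the telescope — are stated in a way that uses only $|\args|$ as a valuation on the coefficient field, rather than any special feature of~$\R$. Since these bounds ultimately reduce to summing absolute values of integer coefficients coming from the bar differential, they translate unchanged to any complete valued field, archimedean or non-archimedean, yielding $\HH^n_b(\Gamma;\K) \cong 0$ for every $n \geq 1$.
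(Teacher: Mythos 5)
Your proposal rests on the claim that the proof of Theorem~\ref{thm:binatebac} is ``coefficient-agnostic'' and adapts verbatim to any complete valued field. This is the gap: the actual proof of Theorem~\ref{thm:binatebac} does not produce a primitive for a bounded cocycle by a telescoping series in $\CC^{n-1}_b(\Gamma;\K)$. It works on the $\ell^1$ chain complex, establishing the uniform boundary condition for the inclusions of finitely generated subgroups into the binate group, and then invokes the Matsumoto--Morita duality (Theorem~\ref{thm:MM:ubc}): $n$-$\ubc$ is equivalent to injectivity of the comparison map in degree $n+1$, and injectivity plus acyclicity gives vanishing. That duality step is where the field enters essentially: it relies on the pairing between $(\CC_*(\Gamma),\|\cdot\|_1)$ and $(\CC_b^*(\Gamma),\|\cdot\|_\infty)$ together with Hahn--Banach/Open Mapping arguments over $\R$. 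Over a non-Archimedean complete valued field the $\ell^1$-norm is not the natural object, Hahn--Banach fails outside the spherically complete case, and the Matsumoto--Morita equivalence is simply not available in the form you need. So ``the bounds reduce to summing absolute values of integer coefficients'' does not rescue the argument, because the argument you would be transporting is not the one the paper actually runs.

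The paper takes a different and cleaner route, which you should compare against. It first proves a general characterization (Theorem~\ref{thm:ubac}): a group is universally boundedly acyclic if and only if it is $\R$-boundedly acyclic and $\Z$-acyclic. By Ostrowski's theorem a complete valued field is $\R$, $\C$, or non-Archimedean. The case of $\C$ reduces to $\R$ since $\C\cong\R^2$ as normed $\R$-vector spaces. The non-Archimedean case is handled by an entirely separate mechanism: a cited injectivity result for the comparison map $\HH^n_b(\Gamma;\K)\to\HH^n(\Gamma;\K)$ that requires $\HH_{n-1}(\Gamma;\Z)$ to be finitely generated (Lemma~\ref{lem:nonarch}), combined with $\Z$-acyclicity and the Universal Coefficient Theorem to kill the target. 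The corollary for binate groups then follows by feeding in the Berrick--Varadarajan acyclicity theorem together with Theorem~\ref{thm:binatebac}. In particular, integral acyclicity is a genuine extra input in the non-Archimedean case, not something that falls out of re-running the real-coefficient proof; your proposal never uses it, which is a sign that the claimed uniform argument cannot be correct as stated.
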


More generally, we characterize universally boundedly acyclic groups as
those groups that are simultaneously acyclic and boundedly acyclic
(Theorem~\ref{thm:ubac}). In this sense, Theorem \ref{thm:binateubac} is a
combination of the acyclicity result of Berrick and Varadarajan,
together with Theorem \ref{thm:binatebac}, but it also contains both
results in its statement. 

\subsection*{Hereditary properties of boundedly acyclic groups}

By analogy with the amenable case, it is interesting to check which
group-theoretic constructions preserve bounded acyclicity.  It is
known that extensions, as well as quotients with boundedly acyclic
kernels, do~\cite{BAc}.  It is therefore natural to wonder whether
these two results extend to a $2$-out-of-$3$ property for bounded
acyclicity and group extensions.

Using the fact that every group embeds $2$-step subnormally into a
binate group, we show that this cannot hold. More precisely:

\begin{thm}[Theorem \ref{thm:normal subgroups}]
There exists a boundedly acyclic group~$\Gamma$ with a normal subgroup~$H$ such that
$\Gamma/H$ is boundedly acyclic, but
$\HH^n_b(H; \R)$ is continuum-dimensional for every~$n \geq 2$.
\end{thm}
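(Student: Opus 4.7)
The plan is to combine Theorem~\ref{thm:binatebac} with the fact, recalled in the introduction, that every group embeds $2$-step subnormally into a binate group. First, I would fix a group $K$ for which $\HH^n_b(K; \R)$ is continuum-dimensional in every degree $n \geq 2$; concrete candidates come from non-abelian free or non-elementary hyperbolic groups via known non-vanishing / dimension computations, or from suitable infinite direct sums of such groups. Applying the $2$-step subnormal embedding to $K$ produces a binate group $\Gamma$ together with a normal subgroup $H \triangleleft \Gamma$ and an inclusion $K \triangleleft H$. Theorem~\ref{thm:binatebac} then immediately yields that $\Gamma$ is boundedly acyclic, settling the first requirement.

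The remaining tasks are (i)~verifying that $\Gamma/H$ is boundedly acyclic, and (ii)~showing that $\HH^n_b(H; \R)$ is continuum-dimensional for every $n \geq 2$. For (i), I would arrange the embedding so that $\Gamma/H$ is itself binate: in the standard construction one builds $\Gamma$ as a tower of mitotic-type extensions whose first layer is $H$, so $\Gamma/H$ is again a directed union of mitotic extensions (starting from the trivial group) and is therefore binate, whence bounded acyclicity of $\Gamma/H$ follows from Theorem~\ref{thm:binatebac}. For (ii), the strategy is to engineer the construction so that $H/K$ is amenable and acts trivially on $\HH^*_b(K; \R)$—for instance, by taking $H$ to be a semidirect product of $K$ with an amenable group acting on $K$ by inner automorphisms. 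Monod's Hochschild--Serre-type isomorphism for amenable quotients in bounded cohomology then identifies $\HH^n_b(H; \R)$ with $\HH^n_b(K; \R)^{H/K} = \HH^n_b(K; \R)$, transferring the continuum dimension from $K$ to $H$.

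The main obstacle is precisely to match the abstract existence of the $2$-step subnormal embedding with the concrete structural requirements of (i) and (ii) simultaneously: one has to select, or suitably refine, a version of Berrick's construction in which $H/K$ is amenable, its action on $\HH^*_b(K; \R)$ is trivial, and $\Gamma/H$ is itself binate. An alternative route, bypassing the Hochschild--Serre isomorphism entirely, would be to ensure that $K$ is a retract of $H$ in the construction—say, by building $H$ from $K$ via a free or wreath-type product that splits—since this would directly exhibit $\HH^n_b(K; \R)$ as a direct summand of $\HH^n_b(H; \R)$, again yielding continuum dimension in every degree $n \geq 2$.
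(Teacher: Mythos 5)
Your overall strategy coincides with the paper's: take a group $K$ with large bounded cohomology, embed it $2$-step subnormally into a binate group via Proposition~\ref{prop:2step}, and use Theorem~\ref{thm:binatebac} for the ambient group and for the quotient, while extracting the large bounded cohomology of the intermediate normal subgroup from that of $K$. However, two of your justifications have genuine gaps. First, your proposed witnesses for $K$ do not work: for non-abelian free groups (and non-elementary hyperbolic groups) the continuum-dimensionality of $\HH^n_b$ is known only in degrees $2$ and $3$; from degree $4$ onwards this is a well-known open problem, and passing to infinite direct sums does not obviously help. One has to invoke the specific constructions of groups with large bounded cohomology in \emph{all} degrees $n \geq 2$ from the literature~\cite{Loeh,our}, which is what the paper does.

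Second, your argument that $\Gamma/H$ is binate rests on the wrong construction. The binate tower of Example~\ref{ex:tower} does not produce a $2$-step subnormal embedding: the first layer is not normal in the direct limit, so there is no quotient ``tower of mitotic extensions starting from the trivial group'' to speak of. The $2$-step subnormal embedding comes from the cone $CK = K^{\Q} \rtimes \Aut(\Q)$ of Example~\ref{ex:cone}, with $K \triangleleft K^{\Q} \triangleleft CK$; here the quotient $CK/K^{\Q} \cong \Aut(\Q)$ is dissipated (Example~\ref{ex:dissipators}), hence binate and boundedly acyclic. Finally, for the intermediate subgroup your primary route through amenability of $H/K$ and a Hochschild--Serre argument is both unnecessary and not obviously arrangeable within this construction; your ``alternative route'' is the correct one and is exactly the paper's argument: $K^{\Q} \cong K \times K^0$ retracts onto $K$, so $\HH^n_b(K;\R)$ is a direct summand of $\HH^n_b(K^{\Q};\R)$, which is therefore continuum-dimensional for all $n \geq 2$.
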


We also look at directed unions of boundedly acyclic groups, and show
that these are boundedly acyclic under an additional technical
requirement (Proposition~\ref{prop:dirun}), which is however not
needed in degree~$2$ (Corollary~\ref{cor:degree2dirun}).

\subsection*{Application to Thompson groups}

The advantage of Theorem \ref{thm:binatebac} is that the class of
binate groups is flexible enough that one can construct several
concrete examples of boundedly acyclic groups. We use this to study
the bounded cohomology of certain analogues of the classical Thompson
groups $F$, $T$ and~$V$.  The amenability question for~$F$ is one of
the most influential open questions in modern group theory. It is
therefore natural to wonder whether $F$ is at least boundedly
acyclic. It is known that $F$ is $2$-boundedly acyclic, but nothing
seems to be known in higher degrees.  Using
Theorem~\ref{thm:binatebac}, we show that a countably singular
analogue of the Thompson group~$F$ is boundedly acyclic
(Proposition~\ref{prop:OF}).

Moreover, we prove that if $F$ is $n$-boundedly acyclic, then the
bounded cohomology of~$T$ is generated by the real Euler class and its
cup powers, up to degree~$n$ (Corollary~\ref{cor:HbT}). In particular,
we obtain: 

\begin{thm}[Corollary~\ref{cor:HbT}/\ref{cor:HbTr}]\label{thm:thompson:intro}
  If the Thompson group~$F$ is boundedly acyclic, then
  $\HH^*_b(T;\R)$ (with the cup-product structure) is isomorphic
  to the polynomial ring~$\R[x]$ with~$|x| =2$ and the bounded
  Euler class of~$T$ is a polynomial generator of~$\HH^*_b(T;\R)$.
  Moreover, the canonical semi-norm on~$\HH_b^*(T;\R)$ then is a norm.
\end{thm}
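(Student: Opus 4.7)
The plan is to exploit that $F$ sits inside $T$ as the stabilizer of a basepoint $[0] \in S^1$ for the natural circle action of $T$; since $F$ fixes $[0]$, the restriction of the bounded Euler class $e_b \in \HH^2_b(T;\R)$ to $F$ vanishes. The core step is to establish a Gysin-type long exact sequence
\[
\cdots \to \HH^{n-2}_b(T;\R) \xrightarrow{\cup\, e_b} \HH^n_b(T;\R) \xrightarrow{\res^T_F} \HH^n_b(F;\R) \to \HH^{n-1}_b(T;\R) \to \cdots,
\]
which I would extract from a short exact sequence of Banach $T$-modules $0 \to \R \to \ell^\infty(T/F;\R) \to Q \to 0$ given by the inclusion of constants, combined with a Shapiro-type isomorphism $\HH^*_b(T;\ell^\infty(T/F;\R)) \cong \HH^*_b(F;\R)$. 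The technical crux is to identify the connecting homomorphism of this long exact sequence with cup product by $e_b$, reflecting the geometric fact that $T/F$ is the $T$-orbit of a dyadic rational on $S^1$ and that the extension class of the coefficient sequence realizes the bounded Euler class.

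Granted the Gysin sequence, the computation becomes an induction on the degree. Under the hypothesis that $\HH^k_b(F;\R) = 0$ for $1 \le k \le n$, the sequence forces $\cup\, e_b \colon \HH^{k-2}_b(T;\R) \to \HH^k_b(T;\R)$ to be an isomorphism for $2 \le k \le n+1$. Seeded by $\HH^0_b(T;\R) = \R$ and the vanishing $\HH^1_b(T;\R) = 0$, this yields $\HH^{2j}_b(T;\R) = \R \cdot e_b^j$ and $\HH^{2j+1}_b(T;\R) = 0$ in the relevant range. Passing to the limit under the full bounded acyclicity of $F$ produces the graded ring isomorphism $\HH^*_b(T;\R) \cong \R[e_b]$ with $|e_b| = 2$, in which $e_b$ is a polynomial generator.

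For the final norm claim, each graded piece $\HH^{2k}_b(T;\R)$ is one-dimensional, so the canonical semi-norm is a norm if and only if $\|e_b^k\| > 0$ for every $k \ge 1$. This positivity follows from explicit cocycle bounds for cup powers of the bounded Euler class of circle actions: representatives for $e_b^k$ on $T$ can be obtained from standard Euler cocycles on the lift $\widetilde{T} \to \Homeo^+(\R)$, and classical estimates show that these classes are non-trivially detected by the $\ell^\infty$-semi-norm. The main obstacle throughout is the construction of the Gysin sequence and the identification of its connecting homomorphism with $\cup\, e_b$; once that is in place, the rest of the argument is formal.
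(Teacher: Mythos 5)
Your proposal hinges entirely on a Gysin-type exact sequence whose construction you defer, and this is precisely where it breaks down. A single short exact sequence of coefficient modules $0 \to \R \to \linf(T/F) \to Q \to 0$ can only produce a long exact sequence whose connecting homomorphisms raise degree by $1$ and whose third terms are $\HH^*_b(T;Q)$; it cannot by itself yield the Gysin shape you write, which requires a degree-$2$ map (cup with $\eub{T}$) and a degree-$(-1)$ ``integration along the fibre'' map $\HH^n_b(F;\R) \to \HH^{n-1}_b(T;\R)$. To realize $\eub{T} \in \HH^2_b(T;\R)$ as an extension class you would need at least a $2$-extension of $\R$ by $\R$ through modules of the form $\linf(S^{k+1})$, and to identify the intermediate groups $\HH^*_b(T;Q)$ you are forced to resolve $Q$ further by $\linf(S^2), \linf(S^3), \dots$. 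At that point you are reconstructing, term by term, the full resolution argument --- and you then need the bounded acyclicity not just of the point stabilizer $F$ but of the stabilizers of circularly ordered $k$-tuples for all relevant $k$ (these are direct powers of $F$, handled via Theorem~\ref{thm:ext}), a hypothesis your sketch never invokes. As written, the ``technical crux'' you flag is not a deferred verification but the entire content of the theorem, and the specific mechanism you propose for it does not work.

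The paper avoids the Gysin sequence altogether (Proposition~\ref{prop:circle}): since every module $\linf(S^{k+1})$, $S = \Z[1/2]/\Z$, is $n$-boundedly acyclic (finitely many orbits, stabilizers isomorphic to powers of $F$, Eckmann--Shapiro), the complex $\linf(S^{*+1})$ is a boundedly acyclic resolution, so $\HH^*_b(T;\R)$ is computed by the invariants of the alternating subcomplex. Transitivity on circularly ordered tuples forces $\linfalt(S^{k+1})^T$ to be $\R$ for $k$ even and $0$ for $k$ odd, with vanishing differentials; the even generator $f_2$ is the orientation cocycle, i.e.\ twice the bounded Euler class, and the cup powers are matched with the higher generators $f_{2k}$ by an explicit combinatorial computation (Appendix~\ref{appx:eupowers}). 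Your treatment of the norm statement is also thinner than needed: the clean argument is not ``classical cocycle estimates'' but the evaluation argument of Corollary~\ref{cor:HbTr} --- the cup powers of the ordinary Euler class are non-trivial in $\HH^*(T;\R)$ by Ghys--Sergiescu, so they pair non-trivially with homology classes, which bounds $\|\eub{T}^{\cup k}\|_\infty$ away from zero on each one-dimensional graded piece.
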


Therefore, the bounded acyclicity of~$F$
would make $T$ into the first group of type~$F_\infty$ that is not
boundedly acyclic, and whose bounded cohomology ring can be completely
and explicitly be computed. Similarly, if $F$ is boundedly acyclic,
then the bounded cohomology ring of~$T^{\times r}$ is a polynomial ring
in $r$~generators of degree~$2$ (Corollary~\ref{cor:HbTr}).

Independently, Monod and Nariman recently established
analogous results for the bounded cohomology of the group of
orientation-preserving homeomorphisms of~$S^1$~\cite{monodnariman}.

\subsection*{A note from the future}

After this paper was finished, Monod proved that the Thompson
group~$F$ is boundedly acyclic~\cite{monod:thompson}.  Therefore,
Theorem~\ref{thm:thompson:intro} shows that the bounded cohomology of
Thompson group~$T$ is isomorphic to the polynomial ring~$\R[x]$
with~$|x| =2$ and that the bounded Euler class of~$T$ is a polynomial
generator of~$\HH^*_b(T;\R)$.

\subsection*{Organisation of this article}

We recall the definition of bounded cohomology and the uniform
boundary condition in Section~\ref{s:bc}. Binate groups are surveyed
in Section~\ref{s:binate}. In Section~\ref{s:hereditary}, we study
hereditary properties of boundedly acyclic groups.
Section~\ref{s:ubac} is devoted to universal bounded acyclicity.  The
applications to Thompson groups are discussed in
Section~\ref{s:thompson}. Finally,
Appendix~\ref{appendix:proof:pseudo} contains the proof of
Theorem~\ref{thm:binatebac}. 

\subsection*{Acknowledgements}

We wish to thank Jon Berrick, Jonathan Bowden, Amir Khodayan Karim,
Kevin Li, Yash Lodha, Antonio L\'{o}pez Neumann, Nicolas Monod, Sam
Nariman and George Raptis for helpful discussions.

\section{Bounded cohomology}\label{s:bc}

We quickly recall basic notions concerning bounded cohomology.

\subsection{Definition of bounded cohomology}\label{ss:bc}

Let $\Gamma$ be a group and let $\R \to \linf(\Gamma^{*+1})$ be the
bounded simplicial $\Gamma$-resolution of~$\R$.  More generally, if
$V$ is a normed $\Gamma$-module, we consider the
complex~$\linf(\Gamma^{*+1},V)$ and set
\[ \CC_b^*(\Gamma;V) := \linf(\Gamma^{*+1},V)^\Gamma.
\]
The \emph{bounded cohomology} of~$\Gamma$ with coefficients in~$V$
is defined as
\[ \HH_b^*(\Gamma;V) := \HH^* \bigl( \CC_b^*(\Gamma;V)\bigr).  
\]
The norm on~$\CC_b^*(\Gamma;V)$ induces a semi-norm on~$\HH_b^*(\Gamma;V)$,
the so-called \emph{canonical semi-norm}.

The canonical inclusion~$\CC_b^*(\Gamma;V) \hookrightarrow \CC^*(\Gamma;V)$
induces a natural transformation between bounded cohomology and
ordinary cohomology, the \emph{comparison map}
\[ \comp_{\Gamma,V}^* \colon \HH_b^*(\Gamma;V) \to \HH^*(\Gamma;V).
\]
Further information on the bounded cohomology of groups (and
spaces) can be found in the literature~\cite{vbc,ivanov,Frigerio}.

In Section~\ref{s:ubac}, we will also be dealing with bounded
cohomology over different valued fields.  Recall that an
\emph{absolute value} on a field $\K$ is a multiplicative map $|\cdot|
: \K \to \R$ such that $|x| = 0$ if and only if $x = 0$; and the
triangle inequality holds: $|x + y| \leq |x| + |y|$.  We say that $\K$
is a \emph{complete valued field} if the metric induced by the
absolute value is complete.  One can then define bounded cohomology
over $\K$ in exactly the same way.

If the strong triangle inequality $|x + y| \leq \max \{ |x|, |y| \}$ holds
for all~$x,y \in \K$, then $\K$ is said to be \emph{non-Archimedean}.
Concerning the bounded cohomology over non-Archimedean fields~\cite{nonarch}, 
we will only use the following result:

\begin{lemma}[{\cite[Corollary 9.38]{nonarch}}]
\label{lem:nonarch}
Let $\Gamma$ be a group, let $n \geq 1$, and suppose that
$\HH_{n-1}(\Gamma; \Z)$ is finitely generated.  Then the comparison
map
$$\comp_{\Gamma,\K}^n : \HH^n_b(\Gamma; \K) \to \HH^n(\Gamma; \K)$$
is injective.
\end{lemma}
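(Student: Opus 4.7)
The plan is to adapt the classical Matsumoto--Morita--Gromov strategy to the non-Archimedean setting: finite generation of~$\HH_{n-1}(\Gamma; \Z)$ yields a uniform boundary condition, which together with a non-Archimedean extension principle forces injectivity of the comparison map.

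First, I would invoke the standard consequence that the hypothesis on~$\HH_{n-1}(\Gamma; \Z)$ implies the $(n-1)$-uniform boundary condition for~$\Gamma$: there exists a constant~$K>0$ such that every boundary~$b \in \BB_{n-1}(\Gamma; \Z)$ admits a filling~$c \in \CC_n(\Gamma; \Z)$ with~$\partial c = b$ and $\|c\|_1 \leq K \|b\|_1$. This is the standard link between finite generation of integral homology and the norm-theoretic behaviour of the bar resolution, and is where the hypothesis enters. Now let~$\phi \in \CC_b^n(\Gamma; \K)$ be a bounded cocycle whose class lies in the kernel of~$\comp_{\Gamma,\K}^n$, so that~$\phi = \delta \psi$ for some (a priori unbounded)~$\psi \in \CC^{n-1}(\Gamma; \K)$. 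In particular~$\phi$ vanishes on~$\ZZ_n(\Gamma; \Z)$, so the recipe~$\tilde{\psi}(\partial c) := \phi(c)$ defines a $\K$-linear functional~$\tilde{\psi} \colon \BB_{n-1}(\Gamma; \Z) \to \K$, and by the~$(n-1)$-UBC we have~$|\tilde{\psi}(b)| \leq K \|\phi\|_\infty \|b\|_1$; that is, $\tilde{\psi}$ is bounded in the~$\ell^1$-operator norm.

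The final step is to extend~$\tilde{\psi}$ to a bounded $\K$-linear functional on all of~$\CC_{n-1}(\Gamma; \Z)$, which supplies a bounded primitive~$\psi'$ of~$\phi$ and hence forces~$[\phi] = 0$ in~$\HH_b^n(\Gamma; \K)$. I would first use the splitting~$\CC_{n-1} = \ZZ_{n-1} \oplus \BB_{n-2}$, valid over~$\Z$ because~$\BB_{n-2}$ is a subgroup of the free abelian group~$\CC_{n-2}$, and then exploit that~$\ZZ_{n-1}/\BB_{n-1} = \HH_{n-1}(\Gamma; \Z)$ is finitely generated to obtain a splitting~$\ZZ_{n-1} \otimes \K \cong (\BB_{n-1} \otimes \K) \oplus V$ with~$V$ finite-dimensional over~$\K$. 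Extending~$\tilde{\psi}$ by zero on~$V \oplus (\BB_{n-2} \otimes \K)$ then produces a candidate bounded primitive.

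The main obstacle is this last step: unlike the Archimedean case, classical Hahn--Banach fails over general non-Archimedean fields, so one cannot simply invoke it to extend~$\tilde{\psi}$. The key is to lean on the finite-dimensionality of~$V \cong \HH_{n-1}(\Gamma; \Z) \otimes \K$: once the extension reduces to a finite-dimensional complement, it can be handled either via Ingleton's non-Archimedean Hahn--Banach theorem (in the spherically complete case) or by an explicit orthogonal construction compatible with the ultrametric. Verifying that the zero-extension on this complement genuinely yields a functional whose~$\ell^\infty$-norm is finite on the basis of~$\CC_{n-1}(\Gamma; \Z)$ -- and hence a bounded cochain -- is the technical heart of the argument; everything else amounts to bookkeeping with the UBC constant and the canonical semi-norm.
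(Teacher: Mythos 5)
The paper does not actually prove this lemma --- it is imported from the reference on bounded cohomology over non-Archimedean fields --- so your argument can only be measured against the statement itself. As written it has a genuine gap, located exactly where the hypothesis is supposed to act. Your first step asserts that finite generation of $\HH_{n-1}(\Gamma;\Z)$ yields an $(n-1)$-uniform boundary condition with respect to $\ell^1$-norms. This implication is false. Take $n=2$ and $\Gamma=F_2$: then $\HH_1(F_2;\Z)\cong\Z^2$ is finitely generated, but no constant $K$ as in your step exists. Indeed, let $q$ be a homogeneous quasimorphism on $F_2$ with defect $D$ that is unbounded on $[F_2,F_2]$ (e.g.\ a Brooks quasimorphism), extended linearly to $\CC_1(F_2;\R)$. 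Since $|q(y)-q(xy)+q(x)|\le D$, one has $|q(\partial c)|\le D\cdot\|c\|_1$ for every $c\in\CC_2(F_2;\Z)$, so a $(1,K)$-$\ubc$ would force $|q(z)|\le DK\cdot\|z\|_1$ for every integral boundary $z$. But for $g\in[F_2,F_2]$ with $q(g)\neq 0$, the chain $[g^m]$ is an integral boundary of $\ell^1$-norm~$1$ with $q([g^m])=m\cdot q(g)\to\infty$. (Equivalently, over $\R$, such a UBC would embed $\HH^2_b(F_2;\R)$ into $\HH^2(F_2;\R)\cong 0$ by Theorem~\ref{thm:MM:ubc}.) So the uniform constant your whole argument feeds on does not exist --- even though the conclusion of the lemma does hold for $F_2$, which already shows the mechanism must be different.

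The underlying issue is that you are running the Archimedean $\ell^1/\ell^\infty$ duality in a setting whose estimates are ultrametric. Over a non-Archimedean $\K$ one has $|\phi(c)|\le\|\phi\|_\infty\cdot\max_\sigma|c_\sigma|$ for $c=\sum_\sigma c_\sigma\cdot\sigma$, and $|m|_\K\le 1$ for every $m\in\Z$; hence every \emph{integral} chain, however large its $\ell^1$-norm, pairs boundedly with bounded cochains, and every $\Z$-linear map between the (free abelian) integral chain groups induces a $\K$-linear map of operator norm at most~$1$ for the sup-norms on coefficients. This is what should replace both your UBC step and your Hahn--Banach worries: the relevant uniform boundary condition is measured in the sup-norm of coefficients, where finite generation of $\HH_{n-1}(\Gamma;\Z)$ genuinely gives uniform control (its role is to bound torsion exponents, the only source of large denominators when filling boundaries with small coefficients), and the zero-extension of $\tilde\psi$ along a $\Z$-defined splitting of $\CC_{n-1}(\Gamma;\Z)$ is then automatically bounded, with no need for Ingleton's theorem. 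Your overall architecture --- define $\tilde\psi(\partial c):=\phi(c)$ on boundaries, bound it, extend it --- is the right shape, but the key estimate is asserted in a normed category where it fails, and the strong triangle inequality, which is what actually makes the lemma true, is never invoked where it is needed.
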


\subsection{The uniform boundary condition}
\label{ss:ubc}

We recall the uniform boundary condition, originally due to Matsumoto
and Morita~\cite{MM}, and some of its variations~\cite{Loeh}.

\begin{defi}[Uniform boundary condition]\label{defi:ubc}
Let $n \in \N$ and let $\kappa \in \R_{> 0}$. A group $\Gamma$
satisfies the $(n, \kappa)$-\emph{uniform boundary condition}, or
simply $(n, \kappa)$-$\ubc$, if for every $z \in \im \partial_{n+1}
\subset \CC_n(\Gamma; \R)$ there exists a chain~$c \in
\CC_{n+1}(\Gamma; \R)$ with
$$
\partial_{n+1} c = z \quand \|c\|_1 \leq \kappa \cdot \|z\|_1.
$$
A group~$\Gamma$ satisfies $n$-$\ubc$ if it has $(n, \kappa)$-$\ubc$ for some
$\kappa \in \R_{> 0}$.
\end{defi}

The uniform boundary condition can lead to bounded acyclicity:

\begin{thm}[{\cite[Theorem~2.8]{MM}}]\label{thm:MM:ubc}
Let $\Gamma$ be a group and let $n \in \N$. Then, the following are
equivalent:
\begin{enumerate}
\item The group $\Gamma$ satisfies $n$-$\ubc$;

\item The comparison map $\comp_{\Gamma,\R}^{n+1} \colon
  \HH_b^{n+1}(\Gamma; \R) \to \HH^{n+1}(\Gamma; \R)$ is injective.
\end{enumerate}
In particular: Every acyclic group that satisfies~$\ubc$ in all
positive degrees is boundedly acyclic.
\end{thm}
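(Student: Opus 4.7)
The plan is to exploit the duality between the bounded cochain complex and the $\ell^1$-completed chain complex: $\CC_b^*(\Gamma;\R)$ can be identified with the topological $\R$-linear dual of the $\ell^1$-completion of the chain complex~$\CC_*(\Gamma;\R)$ computing group homology, so that a bounded cochain in degree~$n$ is exactly a linear functional on $\CC_n(\Gamma;\R)$ that is continuous for the $\ell^1$-norm. Under this identification, $(n,\kappa)$-$\ubc$ translates to the statement that on $\im \partial_{n+1}$ the filling semi-norm $z \mapsto \inf\{\|c\|_1 : \partial_{n+1} c = z\}$ is dominated by $\kappa$ times the restricted $\ell^1$-norm. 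The whole proof is a careful Hahn--Banach argument built on this dictionary.

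For (1)$\Rightarrow$(2), suppose $\Gamma$ satisfies $(n,\kappa)$-$\ubc$ and let $[f] \in \ker\comp_{\Gamma,\R}^{n+1}$. Pick a bounded cocycle representative $f$ and a possibly unbounded cochain $g \in \CC^n(\Gamma;\R)$ with $\delta g = f$. Define $\tilde g \colon \im \partial_{n+1} \to \R$ by $\tilde g(\partial c) := f(c)$; this is well-defined, since $\partial c_1 = \partial c_2$ forces $f(c_1-c_2) = g(\partial(c_1-c_2)) = 0$. For any $z \in \im \partial_{n+1}$, UBC supplies a filling $c_0$ with $\|c_0\|_1 \leq \kappa \|z\|_1$, whence
$$
|\tilde g(z)| = |f(c_0)| \leq \|f\|_\infty \|c_0\|_1 \leq \kappa \|f\|_\infty \|z\|_1.
$$
Thus $\tilde g$ is $\ell^1$-bounded on $\im \partial_{n+1}$, and the Hahn--Banach theorem extends it to a bounded functional $g' \in \CC_b^n(\Gamma;\R)$ satisfying $(\delta g')(c) = g'(\partial c) = f(c)$; hence $[f] = 0$.

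For (2)$\Rightarrow$(1), I would argue the contrapositive. If $n$-$\ubc$ fails, then for every $k$ there is $z_k \in \im \partial_{n+1}$ with $\|z_k\|_1 \leq 1$ but every filling $c$ of~$z_k$ has $\|c\|_1 \geq k$. Passing to a linearly independent subsequence and using Hahn--Banach on a suitable algebraic complement, one constructs a linear functional $\tilde g$ on $\im \partial_{n+1}$ that is continuous with respect to the filling semi-norm but not with respect to the $\ell^1$-norm. Setting $f(c) := \tilde g(\partial c)$ produces an $\ell^1$-bounded cochain which is automatically a cocycle and is an ordinary coboundary of any algebraic extension of $\tilde g$ to~$\CC_n(\Gamma;\R)$; however a bounded primitive $g' \in \CC_b^n(\Gamma;\R)$ with $\delta g' = f$ would restrict to an $\ell^1$-bounded extension of $\tilde g$, contradicting the construction. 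Therefore $[f]$ is a non-zero element of $\ker \comp_{\Gamma,\R}^{n+1}$.

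The ``in particular'' statement then follows immediately: acyclicity and the universal coefficient theorem give $\HH^m(\Gamma;\R) = 0$ for all $m \geq 1$, and $\ubc$ in every positive degree combined with the equivalence forces $\comp_{\Gamma,\R}^m$ to be injective for $m \geq 2$, so $\HH_b^m(\Gamma;\R) = 0$ for $m \geq 2$; the case $m = 1$ is automatic since bounded homomorphisms $\Gamma \to \R$ vanish. The main obstacle I expect is the (2)$\Rightarrow$(1) direction: one has to construct the discontinuous $\tilde g$ precisely enough that the resulting class in $\HH_b^{n+1}(\Gamma;\R)$ is genuinely non-trivial rather than merely given as the coboundary of an unbounded primitive, which amounts to a careful separation of the filling and $\ell^1$ norms via Hahn--Banach.
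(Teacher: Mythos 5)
The paper itself does not prove this statement (it is quoted from Matsumoto--Morita), so your argument has to stand on its own. Your direction (1)$\Rightarrow$(2) does: the well-definedness of $\tilde g$ on $\im \partial_{n+1}$, the $\ell^1$-bound $|\tilde g(z)|\le \kappa\,\|f\|_\infty\,\|z\|_1$ obtained from a $\ubc$-filling, and the Hahn--Banach extension to a bounded primitive constitute exactly the standard proof. The ``in particular'' deduction is also fine, and only uses this direction.

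The gap is in (2)$\Rightarrow$(1), precisely where you predicted. Write $p$ for the filling norm and $q$ for the restricted $\ell^1$-norm on $V := \im\partial_{n+1}$, so that $q \leq (n+2)\,p$ always and failure of $n$-$\ubc$ means $p \not\leq \kappa q$ for every $\kappa$. Your reduction is correct: a functional on $V$ that is $p$-continuous but not $q$-continuous pulls back along $\partial_{n+1}$ to a bounded cocycle representing a nonzero class in $\ker\comp_{\Gamma,\R}^{n+1}$. But ``passing to a linearly independent subsequence and using Hahn--Banach on a suitable algebraic complement'' does not produce such a functional: extending from a linearly independent family gives no control on $p$-continuity of the extension, and controlling the values on the $z_k$ individually says nothing about the behaviour of the functional on their span, which is where $p$-continuity must be checked. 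The statement you need is true, but it requires a completeness argument. For instance: if every $p$-continuous functional were $q$-continuous, then the evaluation functionals given by the $z_k$ (with $q(z_k)\leq 1$ and $p(z_k)\geq k$) would form a pointwise bounded family on the Banach space $(V,p)^{*}$, hence a uniformly bounded family by Banach--Steinhaus; but Hahn--Banach gives $\sup\{|\phi(z_k)| : \|\phi\|_{p^{*}}\leq 1\} = p(z_k)\geq k$, a contradiction. (Equivalently, apply the Open Mapping Theorem to the bounded bijection $(V,q)^{*}\to (V,p)^{*}$.) With that lemma supplied, your argument closes; without it, the key functional $\tilde g$ has not actually been constructed.
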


In the proof of Theorem~\ref{thm:binatebac}, it will be useful to
extend the definition of $\ubc$ from groups to group
homomorphisms~\cite[Definition~4.5]{Loeh}.

\begin{defi}[$\ubc$ for homomorphisms]\label{defi:ubc:homo}
Let $n \in \N$ and let $\kappa \in \R_{> 0}$. A group homomorphism
$\varphi \colon H \to \Gamma$ satisfies the $(n, \kappa)$-uniform
boundary condition, or simply $(n, \kappa)$-$\ubc$, if there exits a
linear map
$$
S \colon \partial_{n+1}(\CC_{n+1}(H; \R)) \to \CC_{n+1}(\Gamma; \R)
$$
with 
$$
\partial_{n+1} \circ S = \CC_n(\varphi; \R) \quand \sv{S} \leq \kappa.
$$
Here $\sv{S}$ is the operator norm of~$S$ with respect to the $\ell^1$-norms.
\end{defi}

The uniform boundary condition will be more systematically
reviewed in a forthcoming paper~\cite{liloehmoraschini}.

\section{Binate groups (alias pseudo-mitotic groups)}
\label{s:binate}

We recall basic notions, properties, and examples of binate (alias
pseudo-mitotic) groups. We begin with the original definition given by
Berrick~\cite{Berrick}:

\begin{defi}[Binate]\label{defi:binate}
Let $\Gamma$ be a group. We say that $\Gamma$ is \emph{binate} if for
every finitely generated subgroup $H \leq \Gamma$ there exists a
homomorphism~$\phi \colon H \to \Gamma$ and an element $g \in \Gamma$
such that for every $h \in H$, we have $$h = [g, \phi(h)] =
g^{-1}\phi(h)^{-1}g\phi(h).$$
\end{defi}

We will rather work with the equivalent notion of pseudo-mitotic
groups, introduced by Varadajan~\cite{Varadarajan}: Here an extra
homomorphism $H \to \Gamma$ is taken as part of the structure, which
leads to more transparent proofs. We refer the reader to the
literature~\cite[Remark 2.3]{BerrickVaradarajan} for a proof of the
equivalence, and point out that the terminology \emph{binate} is more
commonly used.

\begin{defi}[Pseudo-mitosis]\label{defi:pseudo:mitosis}
Let $\Gamma$ be a a group and let $H \leq \Gamma$ be a subgroup.  We
say that $H$ has a \emph{pseudo}-\emph{mitosis} in $\Gamma$ if there
exist homomorphisms $\psi_0 \colon H \to \Gamma$, $\psi_1 \colon H
\to \Gamma$ and an element $g \in \, \Gamma$ such that
\begin{enumerate}
\item For every $h \in H$, we have $h \psi_1(h) = \psi_0(h)$;

\item For all $h, h' \in \, H$, we have $[h, \psi_1(h')] = 1$;

\item For every $h \in H$, we have $\psi_1(h) = g^{-1} \psi_0(h) g$.
\end{enumerate}
\end{defi}

Here is what the definition intuitively means. There exists a
homomorphism $\psi_1 \colon H \to \Gamma$ whose image commutes with
$H$. This induces a homomorphism
$$H \times H \to \Gamma \colon (h, h') \mapsto h \psi_1(h').$$
Precomposing it with the diagonal inclusion $h \mapsto (h, h)$, we get
a homomorphism $\psi_0 \colon h \mapsto h \psi_1(h)$.  In terms of
acyclicity the crucial condition is the third item: $\psi_0$ and
$\psi_1$ are conjugate inside $\Gamma$.

\begin{defi}[Pseudo-mitotic group]\label{defi:pseudo:mitotic:groups}
A group $\Gamma$ is said to be \emph{pseudo-mitotic} if all finitely
generated subgroups of~$\Gamma$ admit a pseudo-mitosis in~$\Gamma$.
\end{defi}

Varadarajan~\cite{Varadarajan} and Berrick~\cite{Berrick}
independently showed the following fundamental result:

\begin{thm}[{\cite[Theorem~1.7]{Varadarajan}}]\label{thm:pseudo:mitotic:are:acyclic}
All pseudo-mitotic groups are acyclic.
\end{thm}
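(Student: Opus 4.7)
The plan is to show $\HH_n(\Gamma; \Z) = 0$ for every $n \geq 1$. Since group homology commutes with directed colimits and every cycle has finite support, it suffices to prove that for every finitely generated subgroup $H \leq \Gamma$ and every class $\alpha \in \HH_n(H; \Z)$, the inclusion-induced image $\iota_*(\alpha) \in \HH_n(\Gamma; \Z)$ vanishes. Fix such an $H$ with pseudo-mitosis data $(\psi_0, \psi_1, g)$. Condition~(2) makes $\iota(H)$ and $\psi_1(H)$ commute in $\Gamma$, so we obtain a homomorphism $\mu \colon H \times H \to \Gamma$, $(h, h') \mapsto \iota(h)\psi_1(h')$. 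Condition~(1) is the factorisation $\mu \circ \Delta = \psi_0$ through the diagonal, and condition~(3), since inner automorphisms act trivially on homology, gives $\psi_{0*} = \psi_{1*}$.

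I would next extract the key algebraic identity via the Alexander--Whitney / K\"unneth decomposition
\[
\Delta_*(\alpha) = \alpha \otimes 1 + 1 \otimes \alpha + \bar\Delta(\alpha),
\]
where $\bar\Delta(\alpha)$ lies in $\bigoplus_{i+j=n,\,1 \leq i,j \leq n-1} \HH_i(H;\Z) \otimes \HH_j(H;\Z)$ (modulo Tor). Applying $\mu_*$, using $\mu_*(\alpha \otimes 1) = \iota_*(\alpha)$, $\mu_*(1 \otimes \alpha) = \psi_{1*}(\alpha)$, and $\mu_*(\Delta_*(\alpha)) = \psi_{0*}(\alpha) = \psi_{1*}(\alpha)$, one obtains the crucial identity
\[
\iota_*(\alpha) = -\mu_*\bigl(\bar\Delta(\alpha)\bigr).
\]
In degree $n = 1$ the right-hand side is empty, so $\iota_*(\alpha) = 0$; this already shows that $\Gamma$ is perfect.

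The induction on $n$ then proceeds. Assuming $\HH_k(\Gamma;\Z) = 0$ for $1 \leq k < n$, we must show each summand $\mu_*(\alpha_{(i)} \otimes \alpha_{(j)})$ with $1 \leq i,j \leq n-1$ vanishes. By naturality of K\"unneth, such a summand is a Pontryagin-type product, along the commuting subgroups $\iota(H)$ and $\psi_1(H)$, of the classes $\iota_*(\alpha_{(i)})$ and $\psi_{1*}(\alpha_{(j)})$. The main obstacle is that, although each factor vanishes in $\HH_*(\Gamma;\Z)$ by the inductive hypothesis, the Pontryagin product through commuting subgroups does not in general factor through $\HH_*(\Gamma;\Z) \otimes \HH_*(\Gamma;\Z)$, so vanishing of the factors does not automatically propagate outside special settings.

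To overcome this, my plan is to iterate the argument on the finitely generated subgroup $K = \iota(H)\cdot\psi_1(H) \leq \Gamma$ that contains every mixed term: each $\mu_*(\alpha_{(i)} \otimes \alpha_{(j)})$ lifts to a class in $\HH_n(K;\Z)$, to which the same identity applies within $K$, producing a further sum involving still-lower-degree classes. Combined with the commutator identity $\iota(h) = [g, \psi_1(h)]$ extracted from conditions~(1) and~(3), which provides explicit chain-level bounding data in $\Gamma$, the iterated pseudo-mitotic structure realises an Eilenberg-swindle-type argument that forces every correction term to bound, closing the induction.
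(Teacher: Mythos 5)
Your reduction to finitely generated subgroups, the construction of the homomorphism $\mu\colon H\times H\to\Gamma$, the identity $\iota_*(\alpha)=-\mu_*\bigl(\bar\Delta(\alpha)\bigr)$ obtained from conditions (1)--(3) together with the triviality of conjugation on homology, and the degree-one case are all correct; this is indeed the skeleton of Varadarajan's argument, which the paper cites rather than reproves but recapitulates (in the bounded setting) in Proposition~\ref{prop:ubc:homo}. The genuine gap is in how you dispose of the cross terms. You correctly identify the obstacle --- $\mu_*(\alpha_{(i)}\otimes\alpha_{(j)})$ need not vanish merely because $\iota_*(\alpha_{(i)})$ and $\psi_{1*}(\alpha_{(j)})$ do --- but the proposed remedy, iterating the identity inside $K=\iota(H)\cdot\psi_1(H)$ and appealing to an ``Eilenberg-swindle-type argument'' driven by the commutator identity, is not an argument. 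Applying the same identity to a class in $\HH_n(K;\Z)$ produces new cross terms of the same degree pattern in $\HH_*(K\times K;\Z)$, with no visible decrease in complexity and no reason for the process to terminate; and the identity $h=[g,\phi(h)]$ supplies explicit bounding data only in degree one (it shows every element is a commutator), not chain-level primitives in higher degrees.

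The resolution that actually works, and that Proposition~\ref{prop:ubc:homo} mirrors, is to make the cross terms die at the \emph{source} rather than at the target. One inducts on the degree $n$ along a tower $\Gamma_0\leq\Gamma_1\leq\cdots$ of finitely generated subgroups in which each inclusion is a pseudo-mitosis: by the inductive hypothesis, the inclusion $f$ of $\Gamma_0$ into a sufficiently deep stage satisfies $\HH_s(f)=0$ for all $s\in\{1,\dots,n-1\}$, so by naturality of the K\"unneth decomposition the map $(f\times f)_*$ annihilates $\bar\Delta(\alpha)$ (including the $\Tor$ terms), and then the relation $\HH_n(\gamma_g)\circ\HH_n(\psi_1\circ f)=\HH_n(i\circ f)+\HH_n(\psi_1\circ f)$ forces $\HH_n(i\circ f)=0$ for the next pseudo-mitosis $i$ in the tower. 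In other words, the cross-term problem is not solved for a single pseudo-mitosis of $H$ in $\Gamma$; it is solved for the composite of sufficiently many pseudo-mitoses, which suffices because $\HH_n(\Gamma;\Z)$ is the colimit of $\HH_n$ over the finitely generated subgroups. Without this tower-plus-induction structure your argument does not close.
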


In the present article, we prove that pseudo-mitotic groups are also
examples of boundedly acyclic groups (Theorem~\ref{thm:binatebac}):

\begin{thm}\label{thm:pseudo:mit:bac}
All pseudo-mitotic groups are boundedly acyclic. 
\end{thm}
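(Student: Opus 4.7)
The plan is to use the Matsumoto--Morita criterion (Theorem~\ref{thm:MM:ubc}): since pseudo-mitotic groups are acyclic by Theorem~\ref{thm:pseudo:mitotic:are:acyclic}, it suffices to prove that every pseudo-mitotic group $\Gamma$ satisfies $n$-$\ubc$ for every $n \geq 1$. I would establish this by induction on $n$, thought of as a quantitative refinement of the chain-level argument of Berrick and Varadarajan. The inductive statement to carry along is: there exists a constant $\kappa_n > 0$ (depending only on $n$) such that for every pseudo-mitotic group $\Gamma$, every finitely generated subgroup $H \leq \Gamma$, and every cycle $z \in \ZZ_n(H; \R)$, there is a chain $c \in \CC_{n+1}(\Gamma; \R)$ with $\partial c = z$ in $\CC_n(\Gamma; \R)$ and $\|c\|_1 \leq \kappa_n \cdot \|z\|_1$. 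Because every simplicial chain in $\CC_n(\Gamma; \R)$ is supported on a finitely generated subgroup, this immediately yields $n$-$\ubc$ for $\Gamma$.

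The key input at each inductive stage is the pseudo-mitosis of $H$ in $\Gamma$: homomorphisms $\psi_0, \psi_1 \colon H \to \Gamma$ and an element $g \in \Gamma$ satisfying the three conditions of Definition~\ref{defi:pseudo:mitosis}. Because $\psi_1(H)$ commutes with $H$ pointwise, the map $\mu \colon H \times H \to \Gamma$, $(h, h') \mapsto h \cdot \psi_1(h')$, is a group homomorphism; its restrictions to the two factors are the inclusion $i$ and $\psi_1$, while $\mu \circ \Delta$ on the diagonal equals $\psi_0$. Pushing the Eilenberg--Zilber/Alexander--Whitney chain equivalence forward along $\mu$ produces, at the chain level, an explicit chain homotopy between $(\psi_0)_*$ and a combination of $i_*$, $(\psi_1)_*$, and a ``cross-term'' operator landing in $\mu_*\bigl(\CC_p(H;\R)\otimes\CC_q(H;\R)\bigr)$ for $p + q = n$ with $0 < p, q < n$. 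On the other hand, the relation $\psi_0(h) = g \cdot \psi_1(h) \cdot g^{-1}$ produces the standard bar-complex chain homotopy between the two induced chain maps, with operator norm bounded purely in terms of $n$. Combining these two chain homotopies expresses $i_*(z)$ as the boundary of a bounded chain in $\CC_{n+1}(\Gamma; \R)$, plus a sum of $\mu$-pushforwards of cycles in strictly lower positive degrees, which admit bounded fillings inside $\Gamma$ by the inductive hypothesis applied to the ambient finitely generated subgroups.

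The main obstacle is bookkeeping rather than conceptual novelty: one must track the operator norms of every ingredient --- the shuffle and Alexander--Whitney maps, the pushforward along $\mu$, and the bar-complex conjugation chain homotopy --- and arrange the induction so that the accumulated constants combine into a single $\kappa_n$ depending only on $n$, not on $\Gamma$ or on the choice of $H$. The notion of $\ubc$ for homomorphisms (Definition~\ref{defi:ubc:homo}) is tailored exactly for this type of estimate: each chain-level construction above encodes as a $\ubc$-bound for a suitable homomorphism (inclusion, product, or conjugation), and the inductive step essentially amounts to composing such bounds along a bounded diagram of homomorphisms. Once the bookkeeping is in place, Theorem~\ref{thm:MM:ubc} transports the chain-level bounds to bounded acyclicity of $\Gamma$ in all positive degrees.
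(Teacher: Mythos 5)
Your proposal follows essentially the same route as the paper: reduce to the uniform boundary condition via Theorem~\ref{thm:MM:ubc} and the acyclicity of pseudo-mitotic groups (Theorem~\ref{thm:pseudo:mitotic:are:acyclic}), then run an induction on the degree with constants depending only on $n$, using the homomorphism $\mu(h,h') = h\cdot\psi_1(h')$, the K\"unneth/Eilenberg--Zilber decomposition of the diagonal, and the norm-$(n{+}1)$ bar-complex chain homotopy for conjugation by $g$. This is exactly the architecture of Proposition~\ref{prop:ubc:homo} and its deployment in Appendix~\ref{appendix:proof:pseudo}, which in turn quantifies Varadarajan's acyclicity argument in the style of the mitotic case.

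One step in your sketch would fail as literally stated and needs repair: the disposal of the cross-terms. You propose to fill the lower-degree cycles appearing in the cross-terms ``inside $\Gamma$'' (the ambient pseudo-mitotic group) by the inductive hypothesis and then push forward along $\mu$. But $\mu$ is only defined on $H \times H$ for the finitely generated subgroup $H$ whose pseudo-mitosis you are invoking --- it is a homomorphism precisely because $\psi_1(H)$ commutes with $H$, and it does not extend to $\Gamma \times \Gamma$. So a filling of a factor of a cross-term is only usable if it lives inside the domain of (some) $\mu$, i.e., inside a finitely generated subgroup that still admits a pseudo-mitosis further up; a filling scattered through all of $\Gamma$ cannot be pushed through $\mu$. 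This is exactly why the paper's Proposition~\ref{prop:ubc:homo} works with a four-term chain $H \to H' \to K \to \Gamma \to P$, where the middle maps are required to kill homology and satisfy controlled $\ubc$ \emph{before} the pseudo-mitosis $i\colon\Gamma\to P$ is applied, and why the proof of Theorem~\ref{thm:binatebac} constructs a tower $\Gamma_0 \leq \Gamma_1 \leq \cdots$ of finitely generated subgroups, each included in the next by a pseudo-mitosis, landing the controlled primitive only at a sufficiently high stage $\Gamma_{j_n}$. Once you reorganize your induction around such a tower (so that the lower-degree fillings are produced inside an intermediate finitely generated group on which the next pseudo-mitosis is available), the argument closes and the constants still depend only on $n$.
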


Since the proof is rather technical and it closely follows the ones of
Matsumoto--Morita~\cite{MM} and L\"oh~\cite{Loeh}, we postpone it to
Appendix~\ref{appendix:proof:pseudo}.

\begin{rem}
\label{rem:2BAc}
It is an easy consequence of Theorem~\ref{thm:pseudo:mitotic:are:acyclic}
that pseudo-mitotic groups are
$2$-boundedly acyclic, namely that if $\Gamma$ is a pseudo-mitotic
group, then $\HH^2_b(\Gamma; \R) \cong 0$.  Indeed, if $\Gamma$ is a
pseudo-mitotic group and $h \in \Gamma$, by definition there exist 
homomorphisms~$\psi_0, \psi_1 \colon \langle h \rangle \to \Gamma$ and
an element~$g \in \Gamma$ such that
$$h = \psi_0(h) \psi_1(h)^{-1} = \psi_0(h) g^{-1} \psi_0(h)^{-1} g =
[\psi_0(h)^{-1}, g].$$
In fact, this commutator expression is the one appearing in the
definition of binate groups (Definition~\ref{defi:binate}).  Hence,
every element in a pseudo-mitotic group is a commutator and so the
second comparison map is injective~\cite{bavard}.  This shows that
$\HH^2_b(\Gamma; \R)$ embeds into~$\HH^2(\Gamma; \R)$, which vanishes
by Theorem~\ref{thm:pseudo:mitotic:are:acyclic}.
\end{rem}

\subsection{Examples}
\label{ss:examples}

We present several examples of pseudo-mitotic
groups. A more detailed discussion of these examples
can be found in Berrick's work~\cite{BerrickTop}.

We start with a combinatorial construction of pseudo-mitotic groups
containing a given group.

\begin{example}[Binate tower]
\label{ex:tower}
Let $H$ be a group. Set $H_0 := H$, and construct~$H_{i+1}$
inductively by performing an HNN-extension of~$H_i \times H_i$ so that
the embedding~$H_i \to H_{i+1}$ is a pseudo-mitosis. More precisely,
if
$$H_{i+1} := \langle H_i \times H_i; g_{i+1} \mid g_{i+1}^{-1}(h, h)
g_{i+1} = (1, h) : h \in H_i \rangle,$$
then $h \mapsto (h, 1)$ is a pseudo-mitotic embedding of~$H_i$ in~$H_{i+1}$.

By construction, the direct limit of the $H_i$ is pseudo-mitotic. It
is the initial object in a category of pseudo-mitotic groups
containing $H$~\cite{Berrick}.
\end{example}

This example shows that every group embeds into a pseudo-mitotic
group. We will see in the next section that a less canonical construction leads to
embeddings with more special properties (Proposition~\ref{prop:2step}).

The following allows to construct new binate groups from old ones:

\begin{example}
Let $(\Gamma_i)_{i \in I}$ be a family of binate groups. Then their
direct product $\prod\limits_{i \in I} G_i$ is binate
\cite[Proposition 1.7]{pseudonotmitotic}.
\end{example}

We will soon see that $\Homeo_c(\R^n)$ is binate. Therefore the
previous example shows that $\Homeo_c(\R^n)^{\N}$ is binate, whence
boundedly acyclic. A direct proof of bounded acyclicity is given by
Monod and Nariman~\cite{monodnariman}.

For comparison, note that an arbitrary direct product of amenable
groups need not be amenable.  For instance, if $\Gamma$ is a
non-amenable residually finite group, such as a non-abelian free
group, then $\Gamma$ embeds into the direct product of its finite
quotients, which is therefore not amenable.

\subsubsection{Dissipated groups}

Let us move to more concrete examples.  Varadarajan proved that the
group $\Homeo_c(\R^n)$ of compactly supported homeomorphisms of $\R^n$
is pseudo-mitotic~\cite[Theorem 2.2]{Varadarajan}. Following
Berrick~\cite{BerrickTop}, we show here that this is just an instance
of the behaviour of a larger class of groups: Dissipated boundedly
supported transformation groups.

\begin{defi}[Boundedly supported group]\label{def:bddsupport}
Let $\Gamma$ be a group acting faithfully on a set $X$, which is
expressed as a directed union of subsets~$(X_i)_{i \in I}$.
For each~$i$, let $\Gamma_i := \{ g \in \Gamma \mid g \text{ is supported on } X_i
\}$. We say that $\Gamma$ is \emph{boundedly supported} if $\Gamma$ is the
directed union of the~$\Gamma_i$.
\end{defi}

The key property that makes certain boundedly supported groups
pseudo-mitotic is the following:

\begin{defi}[Dissipators]\label{defi:dissipators}
Let $\Gamma \actson X$ and $(X_i, \Gamma_i)_{i \in I}$ be as in
Definition~\ref{def:bddsupport}.  Let $i \in I$. A \emph{dissipator}
for~$\Gamma_i$ is an element~$\rho_i \in \Gamma$ such that
\begin{enumerate}
\item $\rho_i^k(X_i) \cap X_i = \emptyset$ for all $k \geq 1$.
\item For all $g \in \Gamma_i$, the bijection of $X$ defined by
\begin{equation}\label{eq:dissipated}\tag{$*$}\phi_i(g) := 
\begin{cases}
\rho_i^k g \rho_i^{-k} & \text{on } \rho^k(X_i), \mbox{ for every } k \geq 1; \\
\id & \text{elsewhere}
\end{cases}
\end{equation}
is in $\Gamma$. 
\end{enumerate}
If for each~$i \in I$ there exists a dissipator
for~$\Gamma_i$, we say that $\Gamma$ is \emph{dissipated}.
\end{defi}

In order for $\rho_i$ to be a dissipator, the element $\phi_i(g)$
needs to belong to~$\Gamma$, and the boundedly supported hypothesis
implies that there exists~$j \in I$ such that $\rho_i^k(X_i) \subset
X_j$ for all~$k \geq 1$.  Figure~\ref{fig:dissipator} illustrates this
situation.

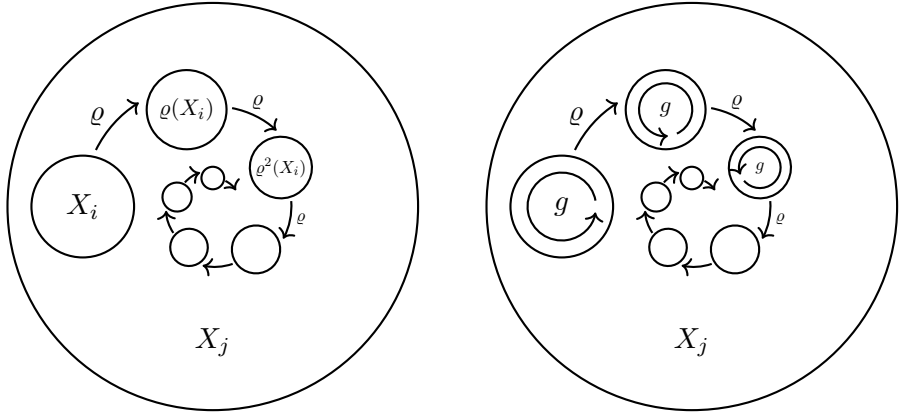
\begin{figure}
  \def\scaleXi{1.4/1.8}
  \def\rotXi{-75}
  \def\radiusXi{0.75}
  \def\codeXi{}
  \def\rhoXi{\rho}
  \def\labelXi{X_i}
  \def\pscaleXi{1}
  \def\protXi{0}
  \def\subsetXi{%
    \begin{scope}[scale={\pscaleXi},rotate={\protXi}]
      \draw (-1.9,0) circle (\radiusXi);
      \draw[->] (154:1.9) arc (154:122:1.7);
      \draw (142:2.15) node[scale={\pscaleXi}] {$\rhoXi$};
      \draw (-1.9,0) node[scale={\pscaleXi}] {$\labelXi$};
      \begin{scope}[shift={(-1.9,0)}]
        \codeXi%
      \end{scope}
    \end{scope}
    \edef\protXi{\rotXi+\protXi}
    \edef\pscaleXi{\scaleXi*\pscaleXi}
  }
  \begin{center}
    \begin{tikzpicture}[x=0.9cm,y=0.9cm,thick]
      \draw (0,0) circle (3);
        \def\pscaleXi{1}
        \def\protXi{0}
        \def\codeXi{}
        \def\rhoXi{\rho}
        \def\labelXi{X_i}
        \subsetXi
        \def\labelXi{\rho(X_i)}
        \subsetXi
        \def\labelXi{\rho^2(X_i)}
        \subsetXi
        \def\labelXi{}
        \def\rhoXi{}
        \subsetXi
        \subsetXi
        \subsetXi
        \subsetXi

        \draw (0,-2) node {$X_j$};
        
      \begin{scope}[shift={(7,0)}]
        \draw (0,0) circle (3);

        \def\pscaleXi{1}
        \def\protXi{0}
        \def\codeXi{%
          \draw[->] (10:0.5) arc (10:350:0.5);
          \draw (0,0) node[scale={\pscaleXi}] {$g$};
        }
        \def\rhoXi{\rho}
        \def\labelXi{}
        \subsetXi
        \subsetXi
        \subsetXi
        \def\codeXi{}
        \def\rhoXi{}
        \subsetXi
        \subsetXi
        \subsetXi
        \subsetXi

        \draw (0,-2) node {$X_j$};
        
      \end{scope}
    \end{tikzpicture}
  \end{center}

  \caption{dissipation, schematically;\newline
    left: the subsets~$\rho^k(X_i)$;
  right: the action of~$\phi(g)$.}
  \label{fig:dissipator}
\end{figure}

The presence of dissipators is enough to ensure that the group is
pseudo-mitotic:

\begin{prop}[{\cite[Section 3.1.6]{BerrickTop}}]
\label{prop:dissipated}
Dissipated groups are pseudo-mitotic.
\end{prop}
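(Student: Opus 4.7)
The plan is to take an arbitrary finitely generated subgroup $H \leq \Gamma$ and exhibit a pseudo-mitosis of $H$ in $\Gamma$ directly from the dissipator data. First, I would use the boundedly supported hypothesis: since $\Gamma$ is the directed union of the $\Gamma_i$, each of the finitely many generators of $H$ lies in some $\Gamma_{i_k}$, and by directedness there exists a single $i \in I$ with $H \leq \Gamma_i$. Then I would fix the dissipator $\rho := \rho_i$ and the associated bijection $\phi := \phi_i$ from~\eqref{eq:dissipated}. A preliminary check shows that $\phi \colon \Gamma_i \to \Gamma$ is a group homomorphism, because every element of $\Gamma_i$ preserves $X_i$ (being supported there) and hence $\phi(g)$ preserves each piece $\rho^k(X_i)$, so compositions are computed block-wise on the partition of $X$ into the pieces $\rho^k(X_i)$ for $k \geq 1$ and their complement.

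Next, I would set $\psi_1 := \phi|_H$ and $\psi_0(h) := h \cdot \psi_1(h)$, and dispatch conditions~(1) and~(2) of Definition~\ref{defi:pseudo:mitosis} quickly. Condition~(1) holds by construction. For~(2), any $h \in H \subseteq \Gamma_i$ is supported on $X_i$, while $\psi_1(h')$ is supported on $\bigcup_{k \geq 1} \rho^k(X_i)$, which is disjoint from $X_i$ by the defining property of the dissipator; since $\Gamma$ acts faithfully on $X$, this disjoint-support argument forces $h$ and $\psi_1(h')$ to commute. The same observation also shows that $\psi_0$ is a homomorphism.

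The main step is condition~(3), for which I would take $g := \rho^{-1}$ and verify $\psi_1(h) = \rho \psi_0(h) \rho^{-1}$ pointwise, using the partition of $X$ into $X_i$, the translates $\rho^k(X_i)$ for $k \geq 1$, and the global complement of $\bigcup_{k \geq 0} \rho^k(X_i)$. For $x \in \rho^k(X_i)$ with $k \geq 2$, the point $\rho^{-1}(x)$ lies in $\rho^{k-1}(X_i)$, where $h$ acts trivially, so $\psi_0(h)$ restricts to $\psi_1(h) = \rho^{k-1} h \rho^{-(k-1)}$ there, and conjugating by $\rho$ produces $\rho^k h \rho^{-k} = \psi_1(h)(x)$. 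For $x \in \rho(X_i)$, one has $\rho^{-1}(x) \in X_i$ where $\psi_1(h)$ is trivial and $\psi_0(h) = h$, so $\rho \psi_0(h) \rho^{-1}$ acts as $\rho h \rho^{-1} = \psi_1(h)$. For $x \in X_i$ or $x$ in the global complement, the key observation is that $\rho^{-1}(x)$ cannot lie in any $\rho^k(X_i)$ with $k \geq 0$, again by the dissipator property $\rho^j(X_i) \cap X_i = \emptyset$ for $j \geq 1$; hence $\psi_0(h)$ fixes $\rho^{-1}(x)$, so $\rho \psi_0(h) \rho^{-1}$ acts as the identity at $x$, matching $\psi_1(h)$.

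I expect the main obstacle to be precisely this case analysis on the partition of $X$: it is bookkeeping rather than a conceptual subtlety, but a single case handled carelessly would break condition~(3). Once it is complete, $H$ admits a pseudo-mitosis in $\Gamma$, and since $H$ was an arbitrary finitely generated subgroup, $\Gamma$ is pseudo-mitotic.
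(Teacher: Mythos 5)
Your proof is correct and follows essentially the same route as the paper: take $i$ with $H \leq \Gamma_i$, set $\psi_1 = \phi_i|_H$ and $g = \rho_i^{-1}$, and exploit the disjointness of supports for condition~(2). The only cosmetic difference is that the paper defines $\psi_0 := \rho_i^{-1}\psi_1\rho_i$ and notes that condition~(1) then holds ``by construction,'' whereas you define $\psi_0(h) := h\psi_1(h)$ and verify condition~(3) pointwise --- the underlying identity $\rho_i^{-1}\phi_i(h)\rho_i = h\,\phi_i(h)$ is the same, and your case analysis checks out.
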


\begin{proof}
Let $\Gamma \actson X$ and $(X_i, \Gamma_i, \rho_i)_{i \in I}$ be as in the
definition of a dissipated group (Definition~\ref{defi:dissipators}).
Let $H \leq \Gamma$ be a finitely
generated subgroup. Since $\Gamma$ is boundedly supported, there
exists an~$i \in I$ such that $H \leq \Gamma_i$. Notice that $H$ commutes
with $\phi_i(H)$ (as defined in Equation~\eqref{eq:dissipated}) since
their supports are disjoint in $X$. Hence, if we define $\psi_1 \colon H
\to \Gamma$ as $\psi_1(h) := \phi_i(h)$, it is immediate to check that
$\psi_1$ is a homomorphism and that $[h', \psi_1(h)] = 1$ for all $h,
h' \in \, H$. We then set $\psi_0 := \rho_i^{-1} \psi_1 \rho_i \colon
H \to \Gamma$ and $g := \rho_i^{-1} \in \Gamma$. By construction, this
implies that $h \psi_1(h) = \psi_0(h)$ for all~$h \in \, H$. Hence,
$\psi_0, \psi_1$ and $g$ are the witnesses of a pseudo-mitosis of~$H$
in~$\Gamma$.
\end{proof}

A more topological version of this criterion is described by Sankaran
and Varadarajan~\cite[Theorem 1.5]{VaradarajanHomeo}.  Many boundedly
supported groups are dissipated, and quite surprisingly this is
usually easy to check.  We list some examples for which dissipators
can be computed directly.  More details and further constructions can
be found in Berrick's paper~\cite[Section 3.1.6]{BerrickTop} and the
references therein, as well as in the one of
Sankaran--Varadarajan~\cite{VaradarajanHomeo}.

\begin{example}[Dissipated groups]\label{ex:dissipators}
The following groups are dissipated:
\begin{enumerate}
\item The group~$\Homeo_c(\R^n)$ of compactly supported homeomorphisms of~$\R^n$
  is dissipated.  This is already contained
  in a paper of Schreier and Ulam~\cite{dissipators}, where they
  study this phenomenon for the (isomorphic) group of homeomorphisms
  of the~$n$-ball in~$\R^n$ fixing a neighbourhood of the boundary.
  Acyclicity was shown by Mather~\cite{Mather}, and the proof
  serves as a model for the proof of acyclicity of pseudo-mitotic
  groups~\cite{Varadarajan}.
\item The previous example generalizes to certain groups of boundedly
  supported homeomorphisms of topological manifolds~\cite{dissipatedmfd}
  and $C^1$-manifolds~\cite{dissipatedc1}.
\item Let $C$ be the standard Cantor set, embedded in~$[0, 1]$. Then
  the group of homeomorphisms of~$C$ that are the identity in a
  neighbourhood of $0$ and~$1$ is dissipated~\cite[Theorem
    2.4]{VaradarajanHomeo}.
\item Let $\Q$ be endowed with the topology as subspace of $\R$.
  Then, the group of homeomorphisms of~$\Q$ having support contained
  in some interval $[a, b]$ with $a < b \in \Q$ is dissipated. The
  same holds for the space of irrational numbers~\cite[Theorem
    1.13]{VaradarajanHomeo}.
\item Forgetting the topology, denote by~$\Aut(\Q)$ the group of
  bijections of~$\Q$ whose support is contained in some interval $(a,
  b)$ with $a < b \in \Q$. Then $\Aut(\Q)$ is
  dissipated~\cite[Theorem~3.2]{pseudonotmitotic}. The same holds for
  groups of bijections of infinite sets with similar properties.
\end{enumerate}
\end{example}

\subsubsection{Flabby groups}
Another source of examples are flabby groups:

\begin{defi}[Flabby group]
A group $\Gamma$ is \emph{flabby} if there exist homomorphisms $\oplus
: \Gamma \times \Gamma \to \Gamma$ and $\tau : \Gamma \to \Gamma$ such
that the following holds: For every finitely generated subgroup $H
\leq \Gamma$ there exist $a, b, c \in \Gamma$ such that for all~$h \in H$:
\begin{enumerate}
\item $h \oplus 1 = a^{-1}ha$;
\item $1 \oplus h = b^{-1}hb$;
\item $h \oplus \tau(h) = c^{-1}\tau(h)c$.
\end{enumerate}
\end{defi}

Flabby groups were defined by Wagoner~\cite{flabby}, who proved
that they are acyclic.  In fact, the following stronger result is
true:

\begin{lemma}[{\cite[Section 3.3]{Berrick}}]
Flabby groups are pseudo-mitotic.
\end{lemma}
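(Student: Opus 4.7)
The plan is to construct the pseudo-mitosis data $(\psi_0, \psi_1, g)$ directly from the flabby operations $\oplus$ and $\tau$. Given a finitely generated subgroup $H \leq \Gamma$, since $\tau$ is a homomorphism the subgroup $K := \langle H, \tau(H)\rangle$ is still finitely generated, so applying the flabby hypothesis to~$K$ produces $a, b, c \in \Gamma$ for which the three identities of the flabby definition hold on all of~$K$; in particular $h \oplus 1 = a^{-1}ha$, $1 \oplus \tau(h) = b^{-1}\tau(h)b$, and $h \oplus \tau(h) = c^{-1}\tau(h)c$ will be available for every~$h \in H$.

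The candidates will be the ``$\tau$-twists''
\[ \psi_0(h) := a\,(h \oplus \tau(h))\,a^{-1}, \qquad \psi_1(h) := a\,(1 \oplus \tau(h))\,a^{-1}, \]
both of which are honest homomorphisms $H \to \Gamma$ as compositions of the homomorphisms $\tau$, $\oplus$, and conjugation by~$a$. Item~(1) of Definition~\ref{defi:pseudo:mitosis} follows by factoring $h \oplus \tau(h) = (h \oplus 1)(1 \oplus \tau(h))$ (multiplicativity of $\oplus$ in its first coordinate) and then using $a(h \oplus 1)a^{-1} = h$ to conclude $\psi_0(h) = h\,\psi_1(h)$. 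Item~(2) is also immediate: the images of $h \mapsto h \oplus 1$ and $h' \mapsto 1 \oplus h'$ commute elementwise in~$\Gamma$, because $\oplus$ is a homomorphism out of a \emph{direct} product, so conjugating by~$a$ yields $[h,\psi_1(h')] = 1$ for all $h, h' \in H$.

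The main obstacle is item~(3), namely finding a single $g \in \Gamma$ that simultaneously conjugates $\psi_0$ to $\psi_1$ on all of~$H$. Here the flabby witnesses $b$ and $c$ must be used in a coordinated way: substituting the identities for~$K$ gives
\[ \psi_0(h) = a c^{-1}\tau(h) c a^{-1}, \qquad \psi_1(h) = a b^{-1}\tau(h) b a^{-1}, \]
so both homomorphisms coincide with conjugation of~$\tau(h)$ by certain elements. The telescoping choice $g := a c^{-1} b a^{-1}$ then produces $g^{-1}\psi_0(h)\,g = \psi_1(h)$ by a direct cancellation of the central $c$-factors. This witnesses a pseudo-mitosis of~$H$ in~$\Gamma$, and since $H$ was an arbitrary finitely generated subgroup, $\Gamma$ is pseudo-mitotic.
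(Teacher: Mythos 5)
Your proof is correct and follows essentially the same route as the paper's: the same $\psi_0(h) = a(h \oplus \tau(h))a^{-1}$ and $\psi_1(h) = a(1 \oplus \tau(h))a^{-1}$, the same factorization $h\oplus\tau(h) = (h\oplus 1)(1\oplus\tau(h))$, and the same use of the three flabby identities to produce the conjugator. Your additional care in enlarging $H$ to $K = \langle H, \tau(H)\rangle$ so that identity~(2) may legitimately be applied to $\tau(h)$ is a precision the paper leaves implicit, and your conjugator $g = ac^{-1}ba^{-1}$ is the one that actually checks out (the paper's $g = ac^{-1}b$ omits the trailing $a^{-1}$).
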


\begin{proof}
Let $\Gamma$ be a flabby group and $H \leq \Gamma$ a finitely
generated subgroup. Let $\oplus, \tau, a, b, c$ be as in the
definition of flabby group. We define $\psi_1(h) := a(1 \oplus
\tau(h))a^{-1}$. Then, since $1 \oplus \Gamma$ commutes with $\Gamma
\oplus 1$, we have $[h', \psi_1(h)] = 1$ for all $h, h' \in H$.  Let
$\psi_0(h) := h \psi_1(h)$ for every $h \in H$. Then:
$$
\psi_0(h) = ac^{-1} b \psi_1(h) b^{-1} c a^{-1}
$$
for every $h \in \, H$. By setting $g := a c^{-1} b$, we get the thesis.
\end{proof}

The definition of a flabby group is more restrictive than that of a
pseudo-mitotic group, since the homomorphisms $\oplus$ and $\tau$
impose some uniformity in the choices of the homomorphisms $\psi_0$
and $\psi_1$. Still, there are several examples of flabby groups in
the literature.

\begin{example}[Flabby groups]\label{ex:flabby}
The following groups are flabby:
\begin{enumerate}
\item To study the algebraic $K$ theory of a ring $R$, Wagoner~\cite{flabby}
  embeds $R$ into another ring~$CR$ called the
  \emph{cone over $R$}. Then the direct limit general linear group
  $\GL(CR)$ is shown to be flabby, whence acyclic.
\item Building on the work of Wagoner, several other examples of
  flabby groups are exhibited by de la~Harpe and McDuff~\cite{flabby2},
  and they all have the
  following flavour.  Let $V$ be an (infinite-dimensional) Hilbert
  space, and let $$V = S_0 \supset S_1 \supset \cdots \supset S_i
  \supset \cdots$$ be a chain of closed subspaces such that
  $S_i/S_{i-1}$ is isomorphic to $V$ for all $i$. Let $\GL(V)$ be the
  group of continuous linear isomorphisms of $V$, and let $\Gamma_i :=
  \{ g \in \GL(V) \mid g(S_i^\perp) = S_i^\perp \}$. Then the direct
  limit of the $\Gamma_i$'s is flabby.  The same holds if one restricts
  to unitary operators.
\item Certain groups of automorphisms of measure spaces fall into the
  framework of the previous item, and thus are flabby~\cite{flabby2}.
\end{enumerate}
\end{example}

\subsubsection{Mitotic groups}

Pseudo-mitotic groups were introduced by
Varadarajan~\cite{Varadarajan} as a generalization of a more
restricted class, that of \emph{mitotic groups}, introduced by
Baumslag, Dyer and Heller~\cite{BDH}. Let us recall the definition:

\begin{defi}[Mitosis]\label{defi:mitosis}
Let $\Gamma$ be a group and let $H \leq \Gamma$ be a subgroup.  We
say that $H$ has a \emph{mitosis} in $\Gamma$ if there exist elements
$s, d \in \Gamma$ such that
\begin{enumerate}
\item[(1')] For every $h \in \, H$, we have $h \cdot s^{-1} h s = d^{-1} h d$;

\item[(2')] For all $h, h' \in \, H$, we have $[h, s^{-1} h' s] = 1$.
\end{enumerate}
\end{defi}

\begin{defi}[Mitotic]\label{defi:mitotic:groups}
A group $\Gamma$ is said to be \emph{mitotic} if all finitely
generated subgroups of~$\Gamma$ admit a mitosis in $\Gamma$.
\end{defi}

The main examples of mitotic groups are algebraically closed
groups~\cite[Theorem 4.3]{BDH}; moreover, a functorial embedding
analogous to Example \ref{ex:tower} is also possible for mitotic
groups. 

If $H$ has a mitosis in~$\Gamma$, then $H$ also has a pseudo-mitosis
in~$\Gamma$: Indeed, we can choose $\psi_0$ to be conjugation by~$d$,
$\psi_1$ to be conjugation by~$s$, and $g := s^{-1} d$. Therefore,
every mitotic group is pseudo-mitotic. On the other hand, the class of
pseudo-mitotic groups is strictly larger as proved by Sankaran and
Varadarajan~\cite{pseudonotmitotic}.  Since
Theorem~\ref{thm:binatebac} generalizes the bounded acyclicity of
mitotic groups~\cite{Loeh} to the class of pseudo-mitotic groups, let
us show an explicit example of a group that is pseudo-mitotic but not
mitotic:

\begin{lemma}
\label{lem:notmitotic}
The group $\Homeo_c(\R)$ is not mitotic.
\end{lemma}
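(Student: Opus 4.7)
The plan is to exhibit a specific $2$-generated subgroup of $\Homeo_c(\R)$ that does not admit a mitosis. Fix $h_1, h_2 \in \Homeo_c(\R)$ with $\text{supp}(h_1) = [0, 1]$, $\text{supp}(h_2) = [1/2, 3/2]$, and $h_i(x) > x$ on the interior of the support of $h_i$; set $H := \langle h_1, h_2 \rangle$. (Every element of $\Homeo_c(\R)$ is automatically orientation-preserving, since an orientation-reversing homeomorphism cannot coincide with the identity near $\pm\infty$.) Assume for contradiction that $s, d \in \Homeo_c(\R)$ realise a mitosis of $H$.

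The first step is to show that $s$ stabilises both $[0, 1]$ and $[1/2, 3/2]$ setwise. Writing $h_1' := s^{-1} h_1 s$ (with support the interval $s^{-1}([0, 1])$), condition $(1')$ forces $h_1 \cdot h_1' = d^{-1} h_1 d$ to have connected support $d^{-1}([0, 1])$, while condition $(2')$ forces $h_1$ and $h_1'$ to commute. A case analysis on the position of $s^{-1}([0, 1])$ relative to $[0, 1]$ rules out every configuration except equality: disjointness violates the connectedness of $\text{supp}(h_1 h_1')$, while each case of partial overlap or proper containment uses the commutation relation together with the strict inequality $h_1(x) > x$ on $(0, 1)$ to force that $h_1'$ fix some point of its own open support, which is impossible since $h_1'$ is conjugate to $h_1$. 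Therefore $s([0, 1]) = [0, 1]$, and the symmetric argument gives $s([1/2, 3/2]) = [1/2, 3/2]$.

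For the second step, set $h_2' := s^{-1} h_2 s$; condition $(2')$ applied to the pair $(h_1, h_2)$ says that $h_1$ and $h_2'$ commute. By the first step, $h_2'$ has support $[1/2, 3/2]$ and, since $s$ is orientation-preserving, satisfies $h_2'(x) > x$ on $(1/2, 3/2)$; in particular $h_2'(1) > 1$. On the other hand, for $x \in (1/2, 1)$, if $h_2'(x) \geq 1$ then $h_1$ fixes $h_2'(x)$, so $h_1(h_2'(x)) = h_2'(x) = h_2'(h_1(x))$, and the injectivity of $h_2'$ gives $h_1(x) = x$, contradicting $h_1(x) > x$. Hence $h_2'(x) < 1$ on $(1/2, 1)$, and continuity yields $h_2'(1) \leq 1$, contradicting $h_2'(1) > 1$.

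The main obstacle is the first step: the case analysis must check every possible relative position of the interval $s^{-1}([0, 1])$ against $[0, 1]$ (disjoint, strictly containing, strictly contained, or partially overlapping). Although each subcase is a short dynamical computation based on the same principle as the second step, one must exhaust all configurations to conclude that $s$ preserves each support setwise, which is what makes the cross-commutation argument in the second step bite.
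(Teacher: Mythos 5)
Your strategy is sound and genuinely different from the paper's. The paper takes $H$ to be a finitely generated group acting \emph{minimally} on $(0,1)$ (concretely, Thompson's group $F$): minimality forces the support of $s^{-1}Hs$ to be disjoint from $(0,1)$, and then the diagonal subgroup $\{h\cdot s^{-1}hs \mid h \in H\}$ is supported on a disconnected set, so it cannot be conjugate to $H$ inside $\Homeo_c(\R)$. You instead take two explicit one-bump homeomorphisms with overlapping supports and run an elementary fixed-point analysis. What the paper's route buys is brevity (two paragraphs, one invariant); what yours buys is self-containedness --- no appeal to the existence of a minimal action --- at the price of the case analysis in your first step. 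That analysis does close up: commutation of $h_1$ with $h_1'$ forces each to permute the other's fixed-point set, and since both are fixed-point-free on the interiors of their supports, $s^{-1}([0,1])$ must be either $[0,1]$ itself or an interval whose interior misses $(0,1)$; your connectedness/fixed-point arguments then dispose of the second branch. Your second step is complete and correct, and can even be shortened: since $h_1$ and $h_2'$ commute, $h_2'$ permutes $\mathrm{Fix}(h_1)=\R\setminus(0,1)$, hence preserves $(0,1)$ setwise and fixes $1$, contradicting $h_2'(1)>1$ immediately.

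Two small points in the first step need patching. First, ``disjointness violates the connectedness of $\mathrm{supp}(h_1h_1')$'' fails when $[0,1]$ and $s^{-1}([0,1])$ are adjacent closed intervals sharing an endpoint: the union is then connected. That subcase is still contradictory, but for a different reason --- $h_1h_1'$ fixes the shared endpoint, which is interior to its support, whereas $d^{-1}h_1d$ is fixed-point-free on the interior of its (single-interval) support. Second, in the case $s^{-1}([0,1])\subsetneq(0,1)$ the fixed point produced by the commutation argument lies in the open support of $h_1$, not of $h_1'$; the mechanism is the symmetric one. With these adjustments the case analysis is exhaustive and the proof is complete.
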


\begin{proof}
Let $H$ be a finitely generated group acting minimally on~$(0, 1)$:
For concreteness, one could take $H$ to be the Thompson group~$F$. We
embed~$H$ inside~$\Homeo_c(\R)$ by letting $H$ act trivially on~$\R
\setminus (0, 1)$.  Let $s \in \Homeo_c(\R)$ be an element such that
$H$ commutes with~$s^{-1}Hs$. Note that $s^{-1}Hs$ is supported
on~$s^{-1} (0, 1)$ and also acts minimally. So in order to commute,
$s^{-1}(0, 1)$ must be disjoint from~$(0, 1)$.

It follows that the diagonal group~$\{ h s^{-1} h s \mid h \in H \}$ is
supported on the disconnected set~$(0, 1) \cup s^{-1}(0, 1)$, and
therefore cannot be conjugate to~$H$ by an element in~$\Homeo_c(\R)$.
This shows that $\Homeo_c(\R)$ is not mitotic.
\end{proof}

With a little more work, this kind of argument can be applied to many
of the groups from Example~\ref{ex:dissipators}. Indeed, it is known 
that $\Homeo_c(\R^n)$ is not mitotic~\cite{pseudonotmitotic}, while
it is pseudo-mitotic (Example~\ref{ex:dissipators}).

\subsubsection{Amenable examples}
\label{sss:amenable:binate}

Most of the examples that we have seen so far are non-amenable, since
they contain free subgroups.  In particular, their bounded acyclicity
does not follow from the classical result for amenable groups.
However, there are some exceptions. The first one is due to Berrick:

\begin{example}
Hall's countable
universal locally finite group~\cite{hall}
is pseudo-mitotic~\cite[Section 3.1]{Berrick}.
Being locally finite, it is amenable.
\end{example}

Locally finite groups cannot be dissipated, since by construction the
dissipators must have infinite order.  In the next example, we
construct a dissipated amenable group.

\begin{example}
  We start with $\Gamma_1 = X_1 = \Z$, with the action given by left
  translation. Of course, $\Gamma_1$ is amenable.

  Next, let $X_2$ be the disjoint union of countably many copies
  of~$X_1$ indexed by~$\Z$, which contains a distinguished copy
  of~$X_1$, indexed by~$0$.  The direct product~$\Gamma_1^{\Z}$ acts
  on~$X_2$ coordinate-wise.  Let $\rho_1$ be the bijection of~$X_2$
  shifting the copies of~$X_1$. We set $\Gamma_2$ to be the group
  generated by the direct product~$\Gamma_1^{\Z}$ and~$\rho_1$.  Note
  that $\Gamma_2$ splits as a semidirect product~$\Gamma_1^{\Z}
  \rtimes \langle \rho_1 \rangle$, so it is $2$-step-solvable.
  Moreover, given~$g \in \Gamma_1$, the element~$\varphi_1(g)$ from
  Definition~\ref{defi:dissipators} is just~$(g_n)_{n \in \Z} \in
  \Gamma_1^{\Z} \leq \Gamma_2$ with $g_n = g$ for~$n \geq 0$ and $g_n
  = 0$ otherwise.

  By induction, if $\Gamma_i$ and $X_i$ have been constructed, we
  construct~$X_{i+1}$ as the disjoint union of $\Z$-many copies
  of~$X_i$, we let $\rho_i$ be the shift, and define~$\Gamma_{i+1}$ as
  the group generated by the direct product of the $\Gamma_i$
  and~$\rho_i$.  Then $\Gamma_{i+1} \cong \Gamma_i^{\Z} \rtimes \Z$ is
  $(i+1)$-step-solvable.  Moreover, given~$g \in \Gamma_i$, the
  element~$\varphi_i(g)$ again belongs to~$\Gamma_i^{\Z} \leq
  \Gamma_{i+1}$.

  The directed union~$\Gamma$ of the~$\Gamma_i$ acts on the directed
  union~$X$ of the~$X_i$.  This action is boundedly supported by
  definition, and the~$\rho_i$ are dissipators by construction.
  Finally, $\Gamma$ is a directed union of solvable groups, so it is
  amenable.
\end{example}

\begin{rem}
  In the construction, one cannot pick any amenable group~$\Gamma_1$,
  since a direct power of amenable groups need not be amenable in
  general.  Indeed, we strongly use the fact that a direct power of an
  $i$-step-solvable group is still $i$-step-solvable.  By the same
  argument, we could have started with any group satisfying an
  amenable law.
\end{rem}

\section{Hereditary properties of boundedly acyclic groups}\label{s:hereditary}

In this section, we discuss the stability of bounded acyclicity under
certain operations.  We will present new results concerning normal
subgroups and directed unions.  The case of normal subgroups will make
use of pseudo-mitotic groups, showcasing their versatility compared to
mitotic groups.

\subsection{Extensions}

We start with the operation of taking extensions of boundedly acyclic
groups. This behaves particularly well:

\begin{thm}[{\cite[Corollary 4.2.2]{BAc}}]
\label{thm:ext}

Let $1 \to N \to \Gamma \to Q \to 1$ be an exact sequence of groups,
where $\phi : \Gamma \to Q$ denotes the quotient map, and let $n \in
\N$. Suppose that $N$ is $n$-boundedly acyclic. Then $\Gamma$ is
$n$-boundedly acyclic if and only if $Q$ is $n$-boundedly acyclic.
\end{thm}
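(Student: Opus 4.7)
The plan is to establish the stronger claim that, under the assumption that $N$ is $n$-boundedly acyclic, the inflation map
\[ \phi^* \colon \HH^i_b(Q;\R) \longrightarrow \HH^i_b(\Gamma;\R) \]
is an isomorphism for every $i \in \{0, 1, \dots, n\}$. From this, the "if and only if" statement of the theorem is immediate: in every positive degree~$i \le n$, the space $\HH^i_b(\Gamma;\R)$ vanishes exactly when $\HH^i_b(Q;\R)$ vanishes. Note that this strengthens Gromov's classical mapping theorem, which gives the isomorphism in all degrees under the stronger assumption that $N$ be amenable.

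To prove this isomorphism, I would invoke the Hochschild--Serre type spectral sequence in bounded cohomology associated to the short exact sequence $1 \to N \to \Gamma \to Q \to 1$. Its $E_2$-page reads
\[ E_2^{p,q} \cong \HH^p_b\bigl(Q; \HH^q_b(N;\R)\bigr), \]
it converges to $\HH^{p+q}_b(\Gamma;\R)$, and along the bottom row the edge homomorphism coincides with inflation. By the hypothesis on~$N$ we have $\HH^q_b(N;\R) \cong 0$ for $1 \le q \le n$, while $\HH^0_b(N;\R) \cong \R$ carries the trivial $Q$-action. Consequently, in every total degree $i \le n$ the only potentially non-zero entries lie on the row~$q = 0$, so all incoming and outgoing differentials vanish in this range. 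The spectral sequence therefore yields $\HH^i_b(\Gamma;\R) \cong E_2^{i,0} \cong \HH^i_b(Q;\R)$, with the isomorphism induced by~$\phi^*$.

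The main technical obstacle is the rigorous construction of the spectral sequence in the Banach framework, since the coefficient $\HH^q_b(N;\R)$ a priori only carries the canonical \emph{semi}-norm, which may be degenerate, and because the $Q$-action on these (reduced) spaces needs to be handled carefully. One way to deal with this is to pass to the reduced bounded cohomology so as to obtain a genuine Banach $Q$-module, and then verify that the vanishing hypothesis is preserved. An alternative that sidesteps spectral sequences is a direct chain-level argument by induction on~$i \le n$: one constructs a $Q$-equivariant bounded chain map between a standard bounded resolution of~$\R$ over~$Q$ and the $N$-invariants of the bounded bar resolution of~$\Gamma$, extending the construction one degree at a time using the vanishing of $\HH^*_b(N;\R)$ to annihilate the obstruction class that appears at each stage.
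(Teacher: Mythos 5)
You should first note that the paper does not actually prove this statement: it is imported as \cite[Corollary~4.2.2]{BAc}, so there is no internal proof to compare against, and your argument has to stand on its own. Its overall strategy --- show that inflation $\HH^i_b(Q;\R)\to\HH^i_b(\Gamma;\R)$ is an isomorphism for $i\le n$ --- is the right one and is indeed how the cited result is established. But there is a genuine gap at the step where you kill the rows $1\le q\le n$. In any Hochschild--Serre-type spectral sequence for bounded cohomology, the entries that must vanish are not controlled by $\HH^q_b(N;\R)$ alone: before one can even write $E_2^{p,q}\cong\HH^p_b\bigl(Q;\HH^q_b(N;\R)\bigr)$, the relevant first-page terms have the form $\HH^q_b\bigl(N;\linf(Q^{p+1},\R)\bigr)$, where $N$ acts trivially on $Q^{p+1}$. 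Passing from $\HH^q_b(N;\R)\cong 0$ to $\HH^q_b(N;\linf(S,\R))\cong 0$ for an arbitrary trivial $N$-set $S$ is the crux of the whole theorem: a cocycle valued in $\linf(S,\R)$ is a uniformly bounded family of real cocycles, and one must produce primitives with \emph{uniformly} bounded norm. This is exactly where the Open Mapping Theorem enters (finiteness of the vanishing modulus, cf.\ Lemma~\ref{lem:vanmod:finite} of this paper). Your proposal treats this as a peripheral ``technical obstacle'' about semi-norms, but it is the mathematical heart of the statement, and it is not supplied.

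Moreover, neither of your two workarounds closes the gap as stated. Monod's spectral sequence identifies the $E_2$-page in the form you quote only under the extra hypothesis that the canonical semi-norm on $\HH^*_b(N;\R)$ is a norm, and passing to reduced bounded cohomology destroys the exactness needed to run a spectral sequence at all, so option (a) does not obviously work. Option (b) --- a $Q$-equivariant chain-level construction, in effect using $\linf(\Gamma^{*+1})^N$ as a resolution of $\R$ over $Q$ --- is essentially the actual proof in \cite{BAc}, but ``annihilating the obstruction at each stage'' with bounded control requires precisely the uniform vanishing with $\linf(S,\R)$-coefficients described above. So the skeleton of your argument is correct, but the decisive quantitative input that makes the rows vanish is missing, and without it the proof does not go through.
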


In particular, the class of $n$-boundedly acyclic groups is closed
under extensions. This generalizes the classical results for
extensions with amenable kernel~\cite{vbc} and amenable
quotient~\cite{Johnson, coamenable}.

A natural question is then whether a $2$-out-of-$3$ property holds. Namely:

\begin{quest}
\label{q:2:3}
In an extension~$1 \to N \to \Gamma \to Q \to 1$, suppose that
$\Gamma$ and $Q$ are $n$-boundedly acyclic. Is $N$ necessarily
$n$-boundedly acyclic?
\end{quest}

A characterization of when this occurs is available~\cite[Corollary
  4.2.1]{BAc}, but it is given in terms of vanishing of bounded
cohomology with a larger class of coefficients, and so it does not
settle Question \ref{q:2:3} in either direction.  We will answer
Question~\ref{q:2:3} in the negative in Theorem~\ref{thm:normal
  subgroups}.

\subsection{Normal subgroups}

Pseudo-mitotic groups are varied enough that they allow for several
strong embedding constructions. The following is the most relevant
one:

\begin{example}[Cone over a group]\label{ex:cone}
If $\Gamma$ is a group, let $\Gamma^{\Q}$ be the group of functions
from $\Q$ to $\Gamma$ that map all numbers outside some finite
interval to the neutral element. The group~$C\Gamma := \Gamma^{\Q}
\rtimes \Aut(\Q)$, where $\Aut(\Q)$ (defined in Example
\ref{ex:dissipators}) acts on $\Gamma^{\Q}$ by shifting the
coordinates, is pseudo-mitotic, in fact dissipated~\cite[Section
  3.5]{Berrick}. The group~$C \Gamma$ is called the \emph{cone over
  $\Gamma$}, and was introduced by Kan and Thurston~\cite{KanThurston}
as a key step in the proof of their celebrated theorem.
\end{example}

\begin{prop}[{\cite[Section 3.5]{Berrick}}]\label{prop:2step}
Every group embeds $2$-step subnormally in a pseudo-mitotic
group. More precisely, for every group $\Gamma$ there exists a
group~$\Gamma^0$ such that $C\Gamma \cong (\Gamma \times \Gamma^0) \rtimes
\Aut(\Q)$ is pseudo-mitotic.
\end{prop}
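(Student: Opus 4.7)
The pseudo-mitoticity of the cone $C\Gamma = \Gamma^{\Q} \rtimes \Aut(\Q)$ is supplied for free by Example~\ref{ex:cone}, so the plan is to focus entirely on producing a $2$-step subnormal embedding of~$\Gamma$ into~$C\Gamma$ and on identifying the complementary factor~$\Gamma^0$.

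First, I would embed~$\Gamma$ into~$\Gamma^{\Q}$ by concentration at a single rational: send $g \in \Gamma$ to the function $f_g \in \Gamma^{\Q}$ with $f_g(0) := g$ and $f_g(q) := 1$ for every $q \neq 0$. Since the group operation on~$\Gamma^{\Q}$ is pointwise, this is a well-defined injective homomorphism whose image, being supported on the singleton~$\{0\} \subset \Q$, lies inside~$\Gamma^{\Q}$. Identifying~$\Gamma$ with its image, I would then set
\[ \Gamma^0 := \{ f \in \Gamma^{\Q} \mid f(0) = 1 \}. \]

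Next, I would verify that $\Gamma^{\Q}$ decomposes as an internal direct product~$\Gamma \times \Gamma^0$. Every $f \in \Gamma^{\Q}$ factors uniquely as $f = f_{f(0)} \cdot f'$ with $f' \in \Gamma^0$; the two subgroups have trivial intersection, and elements of~$\Gamma$ and~$\Gamma^0$ commute because their supports in~$\Q$ are disjoint. Feeding the resulting isomorphism $\Gamma^{\Q} \cong \Gamma \times \Gamma^0$ into $C\Gamma = \Gamma^{\Q} \rtimes \Aut(\Q)$ yields the claimed description $C\Gamma \cong (\Gamma \times \Gamma^0) \rtimes \Aut(\Q)$; the action of~$\Aut(\Q)$ is transported along the isomorphism and need not respect the direct product structure, but this is immaterial since only the underlying group must agree.

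Finally, to produce the subnormal chain, I would observe that~$\Gamma$ is normal in~$\Gamma^{\Q}$ as a direct factor and that~$\Gamma^{\Q}$ is normal in~$C\Gamma$ as the kernel of the canonical projection $C\Gamma \twoheadrightarrow \Aut(\Q)$. Therefore $\Gamma \trianglelefteq \Gamma^{\Q} \trianglelefteq C\Gamma$, which is the desired $2$-step subnormal embedding into a pseudo-mitotic group. There is no genuine obstacle: the proposition is essentially a repackaging of Example~\ref{ex:cone}, and the only mild verification is the disjoint-support commutation in~$\Gamma^{\Q}$, which is immediate from the pointwise product.
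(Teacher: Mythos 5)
Your proof is correct and follows essentially the same route as the paper: invoke Example~\ref{ex:cone} for pseudo-mitoticity of~$C\Gamma$, embed $\Gamma$ into~$\Gamma^{\Q}$ as the functions supported at~$0$, and take $\Gamma^0$ to be the functions vanishing at~$0$, giving $\Gamma^{\Q} \cong \Gamma \times \Gamma^0$ and the subnormal chain $\Gamma \trianglelefteq \Gamma^{\Q} \trianglelefteq C\Gamma$. The only difference is that you spell out the internal direct product verification and the normality statements, which the paper leaves implicit.
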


\begin{proof}
By Example~\ref{ex:cone}, it suffices to show that $\Gamma^{\Q} \cong
\Gamma \times \Gamma^0$, for some group~$\Gamma^0$.  The
group~$\Gamma$ embeds normally in~$\Gamma^{\Q}$ as the subgroup of
functions $\Q \to \Gamma$ that map every non-zero rational to the
identity.  We then set $\Gamma^0$ to be the subgroup of functions that
map~$0 \in \Q$ to~$1 \in \Gamma$.
\end{proof}

In particular, every group embeds $2$-step subnormally in a boundedly
acyclic group.  Embeddings into boundedly acyclic groups have been
considered before~\cite{Loeh, our}, but Proposition~\ref{prop:2step}
goes one step further, and provides a strong negative answer to
Question~\ref{q:2:3}:

\begin{thm}\label{thm:normal subgroups}
There exists a boundedly acyclic group~$\Gamma$ with a normal
subgroup~$H$ such that $\Gamma/H$ is boundedly acyclic, but
$\HH^n_b(H; \R)$ is continuum-dimensional for every~$n \geq 2$.
\end{thm}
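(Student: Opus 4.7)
The plan is to apply the cone construction from Example~\ref{ex:cone} to a carefully chosen building block. We will pick a group $K$ whose bounded cohomology $\HH^n_b(K;\R)$ has continuum dimension in every degree $n \geq 2$, and then set
\[
  \Gamma := CK = K^{\Q} \rtimes \Aut(\Q), \qquad H := K^{\Q},
\]
so that $H$ is normal in $\Gamma$ with quotient $\Gamma/H \cong \Aut(\Q)$.

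First, I would verify that $\Gamma$ and $\Gamma/H$ are boundedly acyclic. By Example~\ref{ex:cone}, $\Gamma = CK$ is pseudo-mitotic (in fact, dissipated). By Example~\ref{ex:dissipators} item~(5), $\Aut(\Q)$ is likewise pseudo-mitotic. Applying Theorem~\ref{thm:pseudo:mit:bac} to both of them immediately yields bounded acyclicity.

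Second, I would transfer the continuum-dimensional bounded cohomology of $K$ to $H = K^{\Q}$. The key observation is that the inclusion $\iota \colon K \hookrightarrow K^{\Q}$ sending $k$ to the function supported at $\{0\} \subset \Q$ with value $k$ is split by the evaluation homomorphism $\mathrm{ev}_0 \colon K^{\Q} \to K$, since $(fg)(0) = f(0)g(0)$ makes $\mathrm{ev}_0$ into a group homomorphism and by construction $\mathrm{ev}_0 \circ \iota = \mathrm{id}_K$. By functoriality of bounded cohomology, $\iota^{*} \circ \mathrm{ev}_0^{*} = \mathrm{id}$ on $\HH_b^{*}(K;\R)$, so $\mathrm{ev}_0^{*} \colon \HH_b^n(K;\R) \to \HH_b^n(H;\R)$ is injective. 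Hence the continuum dimension of $\HH_b^n(K;\R)$ transfers to $\HH_b^n(H;\R)$ for every $n \geq 2$.

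The main obstacle is producing the building block $K$ with continuum-dimensional bounded cohomology in every degree $n \geq 2$ simultaneously. My plan here is to assemble $K$ as a free product
\[ K := \bigast_{n \geq 2} K_n, \]
where each $K_n$ has $\HH_b^n(K_n;\R)$ of continuum dimension. In a free product, each factor $K_n$ is a retract via the projection killing all other free factors, so the same functoriality argument as above embeds $\HH_b^n(K_n;\R)$ as a direct summand of $\HH_b^n(K;\R)$. Suitable $K_n$ exist for each fixed $n$: non-abelian free groups handle $n = 2$ (Brooks) and $n = 3$ (Soma), while for higher $n$ one can take finite products of such groups and invoke the external cup product to transport continuum-many independent classes in degree~$2$ into continuum-many independent classes in each higher degree. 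The delicate point is arranging linear independence of these cup products, but only existence of $K$ is needed for the statement.
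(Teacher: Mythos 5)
Your construction is exactly the paper's: there, one takes a group $H$ with large bounded cohomology, forms $\Gamma := CH \cong H^{\Q}\rtimes\Aut(\Q)$, observes that $C H$ and $\Aut(\Q)$ are pseudo-mitotic (hence boundedly acyclic), and transfers the continuum-dimensional $\HH^n_b$ from $H$ to the normal subgroup $H^{\Q}$ via precisely the retraction argument you give. The only divergence is in producing the building block: the paper simply cites the literature for the existence of (countable, even finitely generated) groups with continuum-dimensional $\HH^n_b(\,\cdot\,;\R)$ in \emph{every} degree $n\geq 2$, whereas you sketch a free-product/cup-power construction whose acknowledged ``delicate point'' (linear independence of continuum-many cup products in higher degrees) is left open --- invoking the known examples of groups with large bounded cohomology would close that gap cleanly.
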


Groups such as $H$ above are said to have \emph{large bounded
  cohomology}: Countable~\cite{Loeh} and even finitely
generated~\cite{our} examples are known to exist.

\begin{proof}
Let $H$ be a group with large bounded cohomology.  Then, for every
group~$\Lambda$, the direct product $H \times \Lambda$ also has
large bounded cohomology (as it retracts onto a group with large
bounded cohomology).  By Proposition \ref{prop:2step} and Example
\ref{ex:dissipators}, this implies that the pseudo-mititotic group
$\Gamma := C H$ provides the desired example.
\end{proof}

Even without the additional hypothesis on the quotient, it seems that
Theorem~\ref{thm:normal subgroups} gives the first example of a
non-boundedly acyclic normal subgroup of a boundedly acyclic group.
Indeed, subgroups of amenable groups are amenable, and several of the
non-amenable examples of boundedly acyclic groups available in the
literature are simple, so they cannot provide counterexamples.  For
instance, $\Homeo_c(\R^n)$ is simple, as are many other groups of
boundedly supported homeomorphisms~\cite{dissipatedc1}.

\subsection{Quotients}

An intriguing open problem is whether boundedly acyclic groups are
closed under passage to quotients~\cite[Section 3.2]{our}.  One of the
main difficulties about this problem is that mitotic groups behave
extremely well with respect to quotients:

\begin{lemma}[{\cite[Page 16]{BDH}}]
\label{lem:quot:m}
Mitotic groups are closed under passage to quotients.
\end{lemma}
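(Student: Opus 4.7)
The plan is to argue directly from the definition of mitosis by lifting and projecting. Let $\pi \colon \Gamma \to \Gamma/N$ denote the quotient map, assume $\Gamma$ is mitotic, and let $\overline{H} \leq \Gamma/N$ be a finitely generated subgroup. I want to exhibit a mitosis for~$\overline{H}$ inside~$\Gamma/N$.

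First I would lift: choose a finite generating set~$\{\overline{h}_1,\dots,\overline{h}_k\}$ of~$\overline{H}$, pick preimages~$h_1,\dots,h_k \in \Gamma$ with $\pi(h_i)=\overline{h}_i$, and let $H := \langle h_1,\dots,h_k\rangle \leq \Gamma$. By construction $H$ is finitely generated and $\pi(H)=\overline{H}$. Since $\Gamma$ is mitotic, Definition~\ref{defi:mitosis} supplies elements $s,d \in \Gamma$ realising a mitosis of~$H$ in~$\Gamma$, i.e.\ satisfying
\[
  h\cdot s^{-1}hs = d^{-1}hd
  \qquad\text{and}\qquad
  [h, s^{-1}h's] = 1
\]
for all $h,h' \in H$. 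Set $\overline{s}:=\pi(s)$ and $\overline{d}:=\pi(d)$.

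Next I would verify that $(\overline{s},\overline{d})$ realises a mitosis of~$\overline{H}$ in~$\Gamma/N$. The point is that both conditions in Definition~\ref{defi:mitosis} are word-equalities, hence preserved by the homomorphism~$\pi$. Applying $\pi$ to the two identities above gives
\[
  \overline{h}\cdot \overline{s}^{-1}\overline{h}\,\overline{s}
    = \overline{d}^{-1}\overline{h}\,\overline{d}
  \qquad\text{and}\qquad
  [\overline{h},\overline{s}^{-1}\overline{h}'\overline{s}] = 1
\]
for all $\overline{h},\overline{h}' \in \pi(H)=\overline{H}$. Since every element of~$\overline{H}$ arises as $\pi(h)$ for some $h \in H$, the conditions~(1') and~(2') of Definition~\ref{defi:mitosis} hold for all of~$\overline{H}$, so $\overline{H}$ admits a mitosis in~$\Gamma/N$. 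As $\overline{H}$ was an arbitrary finitely generated subgroup, $\Gamma/N$ is mitotic.

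There is no real obstacle here: the only thing to be slightly careful about is the lifting step, which is what ensures that the chosen~$H \leq \Gamma$ is finitely generated \emph{and} surjects onto~$\overline{H}$, so that the universally-quantified mitosis conditions in~$\Gamma$ descend to universally-quantified conditions in~$\Gamma/N$. This is precisely the feature of the definition (equational rather than, say, involving commutator length bounds or conjugacy of subgroups) that makes mitoticity stable under quotients, and it is what fails in the bounded-cohomological setting.
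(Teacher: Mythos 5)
Your argument is correct: the paper itself does not reproduce a proof (it only cites Baumslag--Dyer--Heller), and your lift-generators-then-project argument is exactly the standard one, since both mitosis conditions in Definition~\ref{defi:mitosis} are equational and hence descend along the quotient map once one has a finitely generated lift $H$ with $\pi(H)=\overline{H}$. Nothing further is needed.
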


On the other hand, the same behaviour does not hold for pseudo-mitotic
groups:

\begin{lemma}[{\cite[Theorem~3.3]{pseudonotmitotic}}]
\label{lem:quot:pm}
Pseudo-mitotic groups are not closed under passage to quotients.
\end{lemma}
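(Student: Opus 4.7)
The strategy is to exhibit an explicit counterexample: a pseudo-mitotic group $\Gamma$ together with a normal subgroup $N \triangleleft \Gamma$ such that $\Gamma/N$ is not pseudo-mitotic. Recall that every pseudo-mitotic group is acyclic by Theorem~\ref{thm:pseudo:mitotic:are:acyclic}, so in particular perfect. Since perfectness passes to quotients, every quotient of a pseudo-mitotic group automatically has trivial first integral homology, and so any obstruction to being pseudo-mitotic in the quotient must be detected by higher homology. The goal is therefore reduced to constructing a pseudo-mitotic $\Gamma$ and a normal subgroup $N \triangleleft \Gamma$ with $\HH_n(\Gamma/N;\Z) \neq 0$ for some $n \geq 2$, which by Theorem~\ref{thm:pseudo:mitotic:are:acyclic} suffices to rule out pseudo-mitoticity of $\Gamma/N$.

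My plan is to start from one of the concrete dissipated groups of Example~\ref{ex:dissipators}, such as a group of boundedly supported homeomorphisms of a manifold or the bijection group $\Aut(\Q)$, where one has an explicit handle on generators, relations, and an underlying geometric or combinatorial action. The first step is to identify a sufficiently rich normal subgroup $N$, for example by restricting to elements compatible with some additional combinatorial structure on the underlying space (a parity, a signed-volume, or a rotation-type invariant), so that the quotient $\Gamma/N$ acquires a nontrivial class in integral homology in some degree $\geq 2$. The second step is to detect this class, either by mapping $\Gamma/N$ onto a non-acyclic group whose homology is already known (for instance a finite perfect group with nontrivial Schur multiplier) or through a direct cocycle construction which vanishes on $\Gamma$ but is nonzero on the quotient.

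The main obstacle is precisely the tension between the acyclicity of $\Gamma$ and the desired non-acyclicity of $\Gamma/N$: such pairs are not abundant, because most natural normal subgroups of dissipated or binate groups yield quotients that remain acyclic, and often even pseudo-mitotic (cone-type constructions, as in Example~\ref{ex:cone}, illustrate this inertia). Overcoming this requires a careful choice of $\Gamma$ and $N$ such that the combinatorial structure responsible for the dissipators in $\Gamma$ is genuinely destroyed in the quotient — enough to break the existence of conjugators $g$ witnessing pseudo-mitoses, while still leaving a well-defined quotient group. This is the content of Sankaran and Varadarajan's construction in~\cite[Theorem~3.3]{pseudonotmitotic}, which supplies the desired pair $(\Gamma, N)$ and completes the argument.
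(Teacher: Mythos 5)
The paper offers no proof of this lemma beyond the citation to Sankaran--Varadarajan, so your ultimate reliance on \cite[Theorem~3.3]{pseudonotmitotic} to ``supply the desired pair'' is in line with what the paper itself does; to that extent the approaches coincide, and your logical skeleton (exhibit a quotient with non-trivial integral homology in some degree $\geq 2$ and invoke Theorem~\ref{thm:pseudo:mitotic:are:acyclic}) is indeed the mechanism behind the cited result. Two caveats, though. First, your claim that any obstruction to pseudo-mitoticity of the quotient ``must be detected by higher homology'' is an overstatement: acyclicity is a necessary \emph{consequence} of pseudo-mitoticity, not a characterization of it, so a quotient could a priori fail to be pseudo-mitotic while remaining acyclic. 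This does not damage your argument, since you only use the valid implication (non-acyclic $\Rightarrow$ not pseudo-mitotic), but the reduction you announce is not forced. Second, the construction you sketch (parity or signed-volume invariants, surjections onto finite perfect groups with non-trivial Schur multiplier, bespoke cocycles) is never carried out and is more elaborate than what actually happens. As the paper records in the paragraph following the lemma, in the Sankaran--Varadarajan example the kernel $N$ is the group of finitely supported permutations of $\N$, which is not perfect: $\HH_1(N;\Z) \cong \Z/2$ via the sign homomorphism. The five-term exact sequence for $1 \to N \to \Gamma \to Q \to 1$ with $\Gamma$ acyclic then gives
\[
0 \to \HH_2(Q;\Z) \to \HH_1(N;\Z)_Q \to 0,
\]
and since any action on $\Z/2$ is trivial, $\HH_2(Q;\Z) \cong \Z/2 \neq 0$. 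Thus $Q$ is not acyclic, hence not pseudo-mitotic; the non-acyclicity of the quotient comes for free from the non-perfectness of the kernel, with no further cohomological detection needed.
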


This suggests that pseudo-mitotic groups might be useful for
constructing counterexamples for the problem above. However, the
example considered by Sankaran and
Varadarajan~\cite[Theorem~3.3]{pseudonotmitotic} still produces a
boundedly acyclic quotient. Indeed, in this situation the kernel of
the epimorphism is the group of finitely supported permutations of
$\N$, which is locally finite and therefore amenable.  Hence, by
Theorem \ref{thm:ext} (or simply by Gromov's Mapping Theorem
\cite{vbc}), the quotient is a boundedly acyclic non-pseudo-mitotic
group.

A more interesting situation arises from the context of algebraic
$K$-theory.  Indeed, following Berrick~\cite{Berrick_book} given a
(unital, associative) ring~$R$ one can embed it into its cone~$CR$ as
a two-sided ideal. This leads to a short exact sequence
$$
1 \to \GL(R) \to \GL(CR) \to Q \to 1,
$$
where $Q$ is the direct general linear group over the suspension of~$R$,
usually denoted by~$\GL(SR)$~\cite[page~85]{Berrick_book}.
As discussed in Example~\ref{ex:flabby}.1, the group~$\GL(CR)$ is a flabby
group, whence pseudo-mitotic (in fact this is also a dissipated group as proved by 
Berrick~\cite[pages~84--85]{Berrick_book}). 
On the other hand, since one can compute the $K$-groups of the original ring~$R$ in terms
of the plus construction over~$\GL(SR)$, in general 
$\GL(SR)$ is far from being acyclic~\cite{Berrick_book}.
Hence, a natural question is the following:

\begin{quest}
Let $R$ be a ring. Is the group~$\GL(SR)$ boundedly acyclic?
\end{quest}

A negative answer to this question would show that 
boundedly acyclic groups are not closed under passage to quotients.
On the other hand, here we prove the following:

\begin{prop}
Let $R$ be a ring. Then, the group~$\GL(SR)$ is $3$-boundedly acyclic.
\end{prop}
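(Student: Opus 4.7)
The strategy is to combine the short exact sequence
\[
1 \to \GL(R) \to \GL(CR) \to \GL(SR) \to 1
\]
with the fact that $\GL(CR)$ is flabby (Example~\ref{ex:flabby}.1), hence pseudo-mitotic, and therefore boundedly acyclic by Theorem~\ref{thm:binatebac}. In particular $\GL(CR)$ is $3$-boundedly acyclic, so the extension framework of Theorem~\ref{thm:ext} is in force; what remains is to control what the kernel $\GL(R)$ contributes.

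Degree~$1$ is automatic. For degree~$2$, I would use the low-degree inflation--restriction sequence in bounded cohomology: since $\HH^1_b(\args;\R)\equiv 0$, its leftmost terms vanish and one obtains the injection
\[
\HH^2_b(\GL(SR);\R) \hookrightarrow \HH^2_b(\GL(CR);\R) = 0,
\]
so $\GL(SR)$ is $2$-boundedly acyclic. Extending the sequence one step yields
\[
0 = \HH^2_b(\GL(CR);\R) \to \HH^2_b(\GL(R);\R)^{\GL(SR)} \to \HH^3_b(\GL(SR);\R) \to \HH^3_b(\GL(CR);\R) = 0,
\]
so the required vanishing in degree~$3$ reduces to $\HH^2_b(\GL(R);\R)^{\GL(SR)} = 0$, and it suffices (and would be cleaner) to show the stronger statement that $\GL(R)$ is itself $2$-boundedly acyclic. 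Equivalently, once $\GL(R)$ is known to be $2$-BAc, Theorem~\ref{thm:ext} (combined with the $3$-bounded acyclicity of $\GL(CR)$) does the work through the same $5$-term argument.

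To prove $\HH^2_b(\GL(R);\R) \cong 0$, I would exploit the Eilenberg swindle on the stable linear group $\GL(R) = \varinjlim_n \GL_n(R)$: the block-sum homomorphism and the stable conjugation between $g$ and $g \oplus 1$ make every element of the perfect subgroup $E(R) = [\GL(R),\GL(R)]$ a single commutator, so by Bavard duality every homogeneous quasimorphism on $\GL(R)$ is a homomorphism; hence the kernel of the comparison map $\HH^2_b(\GL(R);\R) \to \HH^2(\GL(R);\R)$ vanishes. A further swindle argument at the level of ordinary $2$-cocycles (or a stabilisation result for $\HH^2(\GL(R);\R)$) is then needed to show that the comparison map itself has trivial image, giving the desired vanishing. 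Alternatively, one could verify directly that $\GL(R)$ has enough Berrick--Varadarajan-style structure (in low degrees only) to mimic the $2$-BAc argument used in Remark~\ref{rem:2BAc} for pseudo-mitotic groups.

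The main obstacle is this last step: while the Eilenberg-swindle intuition makes $\HH^2_b(\GL(R);\R) = 0$ very plausible, rigorously killing the image of the comparison map requires handling the possibly nontrivial abelianisation $K_1(R)$ and the $\GL(SR)$-action carefully. I expect the bulk of the work to lie here; the reduction to $\GL(R)$ via the extension and Theorem~\ref{thm:binatebac} is essentially formal.
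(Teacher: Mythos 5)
Your reduction via the extension $1 \to \GL(R) \to \GL(CR) \to \GL(SR) \to 1$ and the low-degree exact sequence is exactly the paper's: bounded acyclicity of the pseudo-mitotic group $\GL(CR)$ kills the outer terms, and everything comes down to proving $\HH^2_b(\GL(R);\R) \cong 0$. The gap is in that last step, and you have correctly identified where it is but not closed it. The Bavard-duality argument (every element of $E(R)=[\GL(R),\GL(R)]$ is a single commutator, hence no nontrivial homogeneous quasimorphisms) only yields injectivity of the comparison map $\comp^2 \colon \HH^2_b(\GL(R);\R) \to \HH^2(\GL(R);\R)$. That is not enough: $\GL(R)$ is very far from acyclic -- its homology computes the $K$-theory of $R$ via the plus construction, so $\HH^2(\GL(R);\R)$ is typically nonzero -- and you would still have to show that no nonzero ordinary class admits a bounded representative. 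You acknowledge this as ``the main obstacle'' and offer only a hoped-for ``further swindle'' or stabilisation result; stabilisation does not help here, since $H_2(\GL(R);\Z)$ is already stable and can be nonzero. As written, the proof is therefore incomplete.

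The fix is close to what you already have in hand, but you aim it at the wrong criterion. The paper uses the \emph{commuting conjugates} property: for every finitely generated $H \leq \GL(R)$ one has $H \leq \GL_n(R)$ for some $n$, and the permutation matrix $g \in \GL_{2n}(R)$ swapping $e_1,\dots,e_n$ with $e_{n+1},\dots,e_{2n}$ conjugates $H$ to a subgroup supported on a complementary block, so $H$ and $g^{-1}Hg$ commute. The commuting-conjugates criterion then gives $\HH^2_b(\GL(R);\R)\cong 0$ outright, with no reference to the comparison map and no need to control $\HH^2(\GL(R);\R)$ or $K_2(R)$. This is the same block-sum/conjugation structure you invoke for the Eilenberg swindle, just fed into a vanishing theorem for $\HH^2_b$ rather than into Bavard duality.
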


\begin{proof}
Recall that a group extension provides an exact sequence in bounded cohomology
in low degrees~\cite[Corollary~12.4.1 and Example~12.4.3]{monod}, which in this case gives
\begin{align*}
0 \to \HH^2_b(\GL(SR); \R) &\to \HH^2_b(\GL(CR); \R) \to \HH^2_b(\GL(R); \R)^{\GL(SR)} \\
&\to \HH^3_b(\GL(SR); \R) \to \HH^3_b(\GL(CR); \R).
\end{align*}
Using the fact that $\GL(CR)$ is pseudo-mitotic, whence boundedly
acyclic (Theorem~\ref{thm:binatebac}), we then have
$$
\HH^2_b(\GL(SR); \R) \cong 0
$$
and
$$
\HH^2_b(\GL(R); \R)^{\GL(SR)} \cong \HH^3_b(\GL(SR); \R).
$$
We show that $\HH^2_b(\GL(R); \R) \cong 0$, whence the thesis.

It suffices to show that $\GL(R)$ has \emph{commuting
  conjugates}~\cite{cc}; that is, for every finitely generated
subgroup~$H \leq \GL(R)$ there exists~$g \in \GL(R)$ such that $H$ and
$g^{-1} H g$ commute.  Now let $H \leq \GL(R)$ be finitely
generated. Then there exists some~$n \geq 1$ such that $H \leq
\GL_n(R)$.  Let $g \in \GL_{2n}(R) \leq \GL(R)$ be a permutation
matrix that swaps the basis vectors~$e_1, \ldots, e_n$ with~$e_{n+1},
\ldots, e_{2n}$.  Then $g^{-1} H g$ acts trivially on the span
of~$e_1, \ldots, e_n$, and $H$ acts trivially on the span of~$e_{n+1},
\ldots, e_{2n}$; therefore, these subgroups commute.  We conclude that
$\GL(R)$ has commuting conjugates, and so $\HH^2_b(\GL(R); \R) \cong
0$~\cite{cc}. This finishes the proof.
\end{proof}

\subsection{Directed unions}

The operations we have looked at so far are known to preserve
amenability.  This is not surprising since amenable groups are the
most illustrious examples of boundedly acyclic groups.  One further
operation that preserves amenability is that of directed unions.  Here
we study the behaviour of bounded acyclicity under directed unions,
and show that it is preserved under an additional technical
requirement.

To proceed with the proof, it is convenient to consider the following
dual version of UBC:

\begin{defi}\label{defi:vanmod}
Let $n \in \N$, and let $\Gamma$ be a group such that $\HH^n_b(\Gamma;
\R) \cong 0$.  We define the $n$-th \emph{vanishing modulus} of
$\Gamma$ as the minimal $K \in \R_{\geq 0} \cup \{\infty\}$ such that
the following holds:

For each $c \in \ker(\delta^n_b)$ there exists $b \in
\CC^{n-1}_b(\Gamma; \R)$ such that
$$\delta^{n-1}_b(b) = c \qand |b|_\infty \leq K \cdot |c|_\infty.$$
\end{defi}

\begin{example}
Every amenable group~$\Gamma$ has an $n$-th vanishing modulus of $1$,
for all $n \geq 1$.  Indeed, the proof of bounded acyclicity of
amenable groups~\cite[Theorem 3.6]{Frigerio} exhibits a contracting
chain homotopy $\tau$ for the cochain complex~$\CC^*_b(\Gamma; \R)$,
which has norm~$1$ in every degree.  Hence, given $c \in
\ker(\delta^n_b)$, we can just set $b := \tau^n(c)$ and obtain:
$$
\delta^{n-1}_b(b) = \delta^{n-1}_b \tau^n(c) + \tau^{n+1} \delta^{n}_b(c) = c.
$$
\end{example}

In our definition the vanishing modulus takes values in $\R_{\geq 0}
\cup \{\infty\}$. It turns out that only finite values are possible.

\begin{lemma}\label{lem:vanmod:finite}
  Let $n \in \N$ and let $\Gamma$
  be such that $\HH^n_b(\Gamma; \R) \cong 0$.
  Then the $n$-th vanishing modulus of $\Gamma$ is finite.
\end{lemma}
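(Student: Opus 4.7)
The plan is to reduce the claim to a direct application of the open mapping theorem in Banach spaces. The key observation is that the bounded cochain spaces $\CC^k_b(\Gamma; \R) = \linf(\Gamma^{k+1}, \R)^\Gamma$ are Banach spaces when equipped with the supremum norm, since they are closed subspaces (of $\Gamma$-invariant functions) of the Banach spaces $\linf(\Gamma^{k+1}, \R)$. The coboundary $\delta^k_b$ is a bounded linear operator (with operator norm at most $k+2$), and therefore its kernel is a closed subspace of $\CC^k_b(\Gamma; \R)$, hence again a Banach space.

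With these spaces in place, the hypothesis $\HH^n_b(\Gamma; \R) \cong 0$ is precisely the statement that the bounded linear map
\[
\delta^{n-1}_b \colon \CC^{n-1}_b(\Gamma; \R) \longrightarrow \ker(\delta^n_b)
\]
is surjective. The open mapping theorem then guarantees the existence of a constant $K \geq 0$ such that the image of the open unit ball in $\CC^{n-1}_b(\Gamma; \R)$ contains the open ball of radius $1/K$ in $\ker(\delta^n_b)$. Rescaling, for every cocycle $c \in \ker(\delta^n_b)$ one can find $b \in \CC^{n-1}_b(\Gamma; \R)$ with $\delta^{n-1}_b(b) = c$ and $\|b\|_\infty \leq K \cdot \|c\|_\infty$. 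This is exactly the statement that the $n$-th vanishing modulus of $\Gamma$ is at most $K$, and in particular finite.

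There is no serious obstacle here: the argument is an almost formal consequence of a classical Banach-space theorem. The only point to verify is that the bounded cochain complex is genuinely a complex of Banach spaces with continuous differentials, and this is immediate from the definitions. It is worth noting, for the sake of later estimates, that the argument gives no explicit bound on~$K$ beyond existence; quantitative versions would require more hands-on constructions, but the qualitative finiteness asserted in the lemma requires nothing more than the open mapping theorem.
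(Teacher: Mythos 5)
Your proposal is correct and follows essentially the same route as the paper: both arguments observe that vanishing of $\HH^n_b(\Gamma;\R)$ means $\delta^{n-1}_b$ surjects onto the Banach space $\ker\delta^n_b$ and then invoke the Open Mapping Theorem to obtain preimages of controlled norm (the paper phrases this via the induced isomorphism from the quotient $\CC^{n-1}_b(\Gamma;\R)/\ker\delta^{n-1}_b$ and the quotient norm, which is equivalent to your ``image of the unit ball contains a ball'' formulation).
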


\begin{proof}
  This is implicit in the work of Matsumoto and Morita~\cite{MM}: 
  Because of~$\HH_b^n(\Gamma;\R) \cong 0$, we have~$\im \delta_b^{n-1}
  = \ker \delta_b^n$. Hence, the bounded linear map~$\delta_b^{n-1}$
  has closed range; by the Open Mapping Theorem, $\delta_b^{n-1}$
  induces a Banach space isomorphism
  \[ \overline\delta_b^{n-1}
  \colon \CC_b^{n-1}(\Gamma;\R) / \ker \delta_b^{n-1}
  \to \ker \delta_b^n.
  \]
  Let $\varphi^n$ be the inverse of~$\overline \delta_b^{n-1}$. 
  If $c \in \ker\delta_b^n$, then the definition of
  the quotient norm on~$\CC_b^{n-1}(\Gamma;\R) / \ker \delta_b^{n-1}$
  shows that there exists a~$b \in \CC_b^{n-1}(\Gamma;\R)$
  with
  \[ \delta_b^{n-1}(b) = c
  \qand
  |b|_\infty \leq 2 \cdot \|\varphi^n\| \cdot |c|_\infty.
  \]
  Thus, the constant~$2 \cdot \|\varphi^n\|$
  is a finite upper bound for the $n$-th vanishing modulus.
\end{proof}

\begin{prop}
  \label{prop:dirun}
  Let $\Gamma$ be a group that is the directed union of a directed
  family~$(\Gamma_i)_{i \in I}$ of subgroups.  Moreover, let $n \in
  \N$ and suppose that $\HH^n_b(\Gamma_i; \R) \cong 0$ for all $i$,
  and that there is a uniform, finite upper bound for the $n$-th
  vanishing moduli of all the $\Gamma_i$'s.  Then $\HH^n_b(\Gamma; \R)
  \cong 0$.
\end{prop}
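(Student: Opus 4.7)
The plan is to pick any bounded cocycle and produce a bounded primitive by combining the primitives on each $\Gamma_i$ via an ultralimit, using the uniform bound on the vanishing moduli to ensure that the ultralimit is well-defined and that the resulting primitive is bounded.

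More precisely, let $c \in \CC_b^n(\Gamma;\R)$ with $\delta_b^n c = 0$. For each $i \in I$, the restriction~$c_i := c|_{\Gamma_i^{n+1}}$ lies in~$\ZZ_b^n(\Gamma_i;\R)$, and $|c_i|_\infty \leq |c|_\infty$. By hypothesis, $\HH_b^n(\Gamma_i;\R) \cong 0$, and the $n$-th vanishing moduli are uniformly bounded by some~$K \in \R_{\geq 0}$ (finite by Lemma~\ref{lem:vanmod:finite}). Hence there exists $b_i \in \CC_b^{n-1}(\Gamma_i;\R)$ with $\delta_b^{n-1} b_i = c_i$ and $|b_i|_\infty \leq K \cdot |c|_\infty$.

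Since the family~$(\Gamma_i)_{i \in I}$ is directed, the sets~$\{j \in I : j \geq i\}$ form a filter on~$I$; let $\omega$ be an ultrafilter refining this filter. For any $(g_0, \ldots, g_{n-1}) \in \Gamma^n$ there exists $i_0$ with $g_0, \ldots, g_{n-1} \in \Gamma_{i_0}$, and for all $i \geq i_0$ the quantity~$b_i(g_0, \ldots, g_{n-1})$ is defined and bounded by $K \cdot |c|_\infty$. I then set
\[
b(g_0, \ldots, g_{n-1}) := \lim_{i \to \omega} b_i(g_0, \ldots, g_{n-1}),
\]
which exists because the net takes values in a compact interval of~$\R$. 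Clearly $|b|_\infty \leq K \cdot |c|_\infty$.

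It remains to check $\Gamma$-invariance and that $\delta_b^{n-1} b = c$; both follow from the fact that, for any fixed finite tuple of elements of~$\Gamma$, we may pass to an~$i$ large enough to contain all of them in~$\Gamma_i$. Indeed, for $\Gamma$-invariance, given $g, g_0, \ldots, g_{n-1} \in \Gamma$ and $i$ large enough, the $\Gamma_i$-invariance of $b_i$ gives $b_i(gg_0, \ldots, gg_{n-1}) = b_i(g_0, \ldots, g_{n-1})$, and the equality passes to the ultralimit. For the coboundary, the cochain operator is a finite signed sum, which commutes with ultralimits, so
\[
(\delta_b^{n-1} b)(g_0, \ldots, g_n) = \lim_{i \to \omega} (\delta_b^{n-1} b_i)(g_0, \ldots, g_n) = \lim_{i \to \omega} c_i(g_0, \ldots, g_n) = c(g_0, \ldots, g_n),
\]
where the last equality uses that $c_i$ agrees with $c$ on~$\Gamma_i^{n+1}$. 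Thus $c$ is a coboundary, proving $\HH_b^n(\Gamma;\R) \cong 0$. The main conceptual point — the only substantial step — is that the $b_i$'s are not coherent under restriction, so one cannot pass to a direct limit naively; the uniform bound on the vanishing moduli is what allows the ultrafilter/ultralimit construction to replace coherence with a bounded non-constructive choice.
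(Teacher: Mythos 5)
Your proof is correct and is essentially the paper's argument: both extract a global primitive from the uniformly bounded family of local primitives $b_i$ by a non-constructive compactness argument, the uniform bound on the vanishing moduli being exactly what makes this possible. The paper phrases the compactness step via the Banach--Alaoglu Theorem and the finite intersection property of the sets of admissible primitives (after extending each $b_i$ by zero), whereas you take pointwise ultralimits along an ultrafilter refining the directedness filter; these are interchangeable formulations of the same idea, and your verifications of invariance, the coboundary identity, and the norm bound all go through.
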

\begin{proof}
  Let $K < +\infty$ be a common upper bound for the $n$-th vanishing
  moduli of the $\Gamma_i$'s.  We show that the $n$-th vanishing
  modulus of $\Gamma$ is at most $K$. Let $c \in
  \CC_b^n(\Gamma;\R)$ be a bounded cocycle.  For each~$i \in I$, we
  set
  \[ B_i := \bigl\{ b \in \CC_b^{n-1}(\Gamma;\R)
  \bigm| \delta_b^{n-1}(b|_{\Gamma_i}) = c|_{\Gamma_i}
  \text{ and } |b|_\infty \leq K \cdot |c|_\infty
  \bigr\}.
  \]
  It suffices to show that $\bigcap_{i \in I} B_i \neq \emptyset$.  To
  this end, we use the Banach--Alaoglu Theorem: By construction,
  each~$B_i$ is a bounded weak$*$-closed subset
  of~$\CC_b^{n-1}(\Gamma;\R)$ and $B_j \subset B_i$ for all~$j \in I$
  with~$i \leq j$.  Moreover, $B_i \neq \emptyset$: By hypothesis,
  there exists~$b_i \in \CC_b^{n-1}(\Gamma_i;\R)$
  with~$\delta_b^{n-1}(b_i) = c|_{\Gamma_i}$ and $|b_i|_\infty \leq K
  \cdot |c|_\infty$.  We now extend~$b_i$ by~$0$; this extension lies
  in~$B_i$.

  Because the system is directed, the family~$(B_i)_{i\in I}$ satisfies
  the finite intersection property; by the Banach--Alaoglu Theorem,
  therefore the whole intersection~$\bigcap_{i \in I} B_i$ is non-empty.
\end{proof}

The following special case will be used when studying
the Thompson group~$F$ (Lemma \ref{lem:F:F1:bac}):

\begin{cor}
\label{cor:dirun}
Let $\Gamma$ be a group that is the directed union of a directed
family $(\Gamma_i)_{i \in I}$ of subgroups.  Suppose that the
$\Gamma_i$'s are pairwise isomorphic and $n$-boundedly
acyclic.  Then $\Gamma$ is $n$-boundedly acyclic.
\end{cor}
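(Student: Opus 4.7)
The plan is to reduce this immediately to Proposition~\ref{prop:dirun}: the only hypothesis of that proposition that is not explicitly granted by the assumptions is the existence of a uniform finite upper bound for the $n$-th vanishing moduli of the~$\Gamma_i$, and I would obtain this from the pairwise-isomorphism assumption.

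First, I would observe that the $n$-th vanishing modulus (Definition~\ref{defi:vanmod}) is an isomorphism invariant of the group. Indeed, any group isomorphism~$\varphi \colon \Gamma_i \to \Gamma_j$ induces an isometric isomorphism of cochain complexes $\CC_b^*(\Gamma_j;\R) \to \CC_b^*(\Gamma_i;\R)$ intertwining the coboundary maps~$\delta_b^*$; since the vanishing modulus is defined purely in terms of these two pieces of Banach-space data (the sup-norms and the coboundaries), the corresponding moduli for~$\Gamma_i$ and~$\Gamma_j$ coincide.

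Consequently, since all~$\Gamma_i$ are pairwise isomorphic, they share a single common value~$K$ for the $n$-th vanishing modulus. By hypothesis each $\Gamma_i$ is $n$-boundedly acyclic, so Lemma~\ref{lem:vanmod:finite} applies and yields~$K < \infty$. This is precisely the uniform bound required by Proposition~\ref{prop:dirun}, whose conclusion gives~$\HH^n_b(\Gamma;\R) \cong 0$, finishing the proof.

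There is no real obstacle here; the only point worth double-checking is the elementary observation that the vanishing modulus is invariant under group isomorphism, which is a straightforward transport-of-structure along~$\CC_b^{n-1}(\varphi;\R)$ and~$\CC_b^n(\varphi;\R)$.
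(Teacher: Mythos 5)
Your proposal is correct and follows exactly the paper's argument: the paper also deduces the corollary directly from Lemma~\ref{lem:vanmod:finite} and Proposition~\ref{prop:dirun}, with the isomorphism-invariance of the vanishing modulus (which you spell out explicitly) left implicit.
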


\begin{proof}
This follows directly from Lemma~\ref{lem:vanmod:finite} and
Proposition~\ref{prop:dirun}.
\end{proof}

In degree $2$, we may get rid of the uniformity condition in
Proposition~\ref{prop:dirun}, thanks to the following surprising fact:

\begin{prop}
\label{prop:degree2uniform}
Let $\Gamma$ be a $2$-boundedly acyclic group. Then the second
vanishing modulus of~$\Gamma$ is~$1$.
\end{prop}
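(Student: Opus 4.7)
The plan is to unwind the hypothesis $\HH^2_b(\Gamma;\R)\cong 0$ via the classical dictionary between exact bounded $2$-cocycles and bounded quasi-morphisms, and then exploit the elementary fact that a bounded quasi-morphism is pointwise controlled by its defect.

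Concretely, I would fix a bounded cocycle $c\in \ker\delta^2_b\subset \CC^2_b(\Gamma;\R)$. Since $\HH^2_b(\Gamma;\R)\cong 0$, there exists $b\in \CC^1_b(\Gamma;\R)$ with $\delta^1_b b=c$. Written inhomogeneously, this identity reads
\[ c(g,h)=b(g)+b(h)-b(gh) \quad\text{for all }g,h\in\Gamma, \]
so $b$ is a bounded quasi-morphism of defect at most $|c|_\infty$.

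Next, I would iterate this relation along the powers of a fixed element $g\in\Gamma$. A straightforward induction on $n\geq 1$ gives the explicit formula
\[ b(g^n)=n\,b(g)-\sum_{k=1}^{n-1}c(g,g^k), \]
hence $|b(g^n)-n\,b(g)|\leq (n-1)\,|c|_\infty$. Dividing by $n$ and rearranging,
\[ |b(g)|\leq \frac{|b(g^n)|}{n}+\frac{n-1}{n}|c|_\infty \leq \frac{|b|_\infty}{n}+\frac{n-1}{n}|c|_\infty. \]
Since $b$ is bounded, letting $n\to\infty$ forces $|b(g)|\leq |c|_\infty$. Taking the supremum over $g\in\Gamma$ yields $|b|_\infty\leq |c|_\infty$, which is exactly the statement that the second vanishing modulus of $\Gamma$ is at most~$1$.

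The only subtle point -- and the reason the argument genuinely needs the hypothesis $\HH^2_b(\Gamma;\R)\cong 0$, rather than just the existence of \emph{some} (possibly unbounded) primitive -- is that one needs $|b|_\infty<\infty$ in order to annihilate the $|b|_\infty/n$ term in the limit. Beyond this, there is no real obstacle: the whole proof is just the standard homogenisation trick showing that a bounded quasi-morphism is bounded by its defect, combined with the fact that in the inhomogeneous bar complex with trivial real coefficients the coboundary of a $1$-cochain encodes precisely the quasi-morphism defect.
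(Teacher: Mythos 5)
Your argument is correct, and it establishes the inequality that the paper actually uses, namely that the second vanishing modulus is at most~$1$: any bounded primitive $b$ of a bounded cocycle $c$ (which exists because $\HH^2_b(\Gamma;\R)\cong 0$) is a quasimorphism of defect at most $|c|_\infty$, and your estimate $|b(g)|\leq |b|_\infty/n+\tfrac{n-1}{n}|c|_\infty$ followed by $n\to\infty$ yields $|b|_\infty\leq|c|_\infty$. The execution is genuinely different from the paper's: there, one writes down an explicit bounded right inverse $\psi$ of $\delta^1_b$ on $\ker\delta^2_b$, given by $\psi(c)(g_0,g_1)=\sum_{k\geq 0}2^{-(k+1)}\cdot c(1,g_{01}^{2^k},g_{01}^{2^{k+1}})$ with $g_{01}=g_0^{-1}g_1$, and reads off $\|\psi\|\leq 1$ from the geometric series, the identity $\psi(\delta^1_b b)=b$ being a telescoping sum that again requires $b$ to be bounded. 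Both proofs are avatars of the fact that a bounded quasimorphism is dominated by its defect --- yours averages over all powers $g^n$, the paper's telescopes over dyadic powers --- and both use the hypothesis in the same place, to guarantee that the primitive being manipulated is bounded. Your version is slightly more elementary and does not need the injectivity of $\delta^1_b$; the paper's version has the advantage of producing an explicit formula for the primitive in terms of the cocycle.

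Two minor points. First, the vanishing modulus is defined on the homogeneous complex $\linf(\Gamma^{*+1})^\Gamma$, while you compute in the inhomogeneous bar complex; the standard identification between the two is isometric for the sup norms, so nothing is lost, but this should be said. Second, you only prove that the modulus is $\leq 1$, whereas the statement asserts equality. The lower bound is immediate --- a non-zero constant $2$-cocycle $c\equiv\lambda$ has the constant $b\equiv\lambda$ as its unique bounded primitive, since two bounded primitives differ by a bounded homomorphism --- and only the upper bound is needed for Corollary~\ref{cor:degree2dirun}, but the equality should be recorded to match the statement.
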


\begin{proof}
  This is essentially a dual version of a result by Matsumoto and
  Morita~\cite[Corollary~2.7]{MM}.
  First, the map~$\delta_b^1$ is injective, since the only bounded
  homomorphism~$\Gamma \to \R$ is the trivial one.  We consider
  the map 
  \begin{align*}
    \psi \colon \ker\delta_b^2 = \im \delta_b^1
    & \to \CC^1_b(\Gamma;\R)
    \\
    c & \mapsto
    \biggl( (g_0,g_1) \mapsto
    \sum_{k=0}^\infty 2^{-(k+1)} \cdot c (1, g_{01}^{2^k}, g_{01}^{2^{k+1}})
    \biggr),
  \end{align*}
  where we use the abbreviation~$g_{01} := g_0^{-1} \cdot g_1$,
  and claim that $\psi$ is the inverse of~$\delta_b^1$.
  By definition, $\|\psi\| \leq 1$; moreover, $\|\psi\| =1$
  because $\delta_b^1$ sends constant functions to constant functions.

  We are left to prove the claim. Let $c \in \im \delta_b^1$, say~$c =
  \delta_b^1(b)$. We need to show that $\psi(c) = b$: Using $\Gamma$-invariance,
  we obtain for all~$g_0,g_1 \in \Gamma$:
  \begin{align*}
    \bigl(\psi(c)\bigr)(g_0,g_1)
    & =
    \sum_{k=0}^\infty 2^{-(k+1)} \cdot
    \bigl( b(g_{01}^{2^k}, g_{01}^{2^{k+1}}) - b (1, g_{01}^{2^{k+1}}) + b(1, g_{01}^{2^k})\bigr)
    \\
    & =
    \sum_{k=0}^\infty 2^{-(k+1)} \cdot
    \bigl( b(1, g_{01}^{2^{k}}) - b (1, g_{01}^{2^{k+1}}) + b(1, g_{01}^{2^k})\bigr)
    \\
    & =
    \sum_{k=0}^\infty 2^{-k} \cdot b(1,g_{01}^{2^k})
    - \sum_{k=0}^\infty 2^{-(k+1)} \cdot b(1,g_{01}^{2^{k+1}})
    \\
    & = b(1, g_{01}) = b(g_0,g_1).
  \end{align*}
  Note that all series involved are absolutely convergent because $b$ is bounded,
  which is what allows us to change the order of summation. 
\end{proof}

\begin{cor}
\label{cor:degree2dirun}
A directed union of $2$-boundedly acyclic groups is $2$-bound\-edly
acyclic.  \hfill\qedsymbol
\end{cor}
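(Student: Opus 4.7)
The plan is to combine Proposition~\ref{prop:degree2uniform} with Proposition~\ref{prop:dirun}, which is essentially immediate. First I would observe that the obstruction to applying Proposition~\ref{prop:dirun} in general is the need for a \emph{uniform} upper bound on the vanishing moduli of the groups in the directed system. In degree~$n=2$, however, Proposition~\ref{prop:degree2uniform} provides exactly such a uniform bound: every $2$-boundedly acyclic group has second vanishing modulus equal to~$1$, so in particular $K = 1$ works uniformly for any family whatsoever of $2$-boundedly acyclic groups.

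Concretely, let $\Gamma$ be the directed union of a family~$(\Gamma_i)_{i \in I}$ of $2$-boundedly acyclic subgroups. By Proposition~\ref{prop:degree2uniform}, each~$\Gamma_i$ has second vanishing modulus at most~$1$, so the hypothesis of Proposition~\ref{prop:dirun} is satisfied with~$n=2$ and common upper bound~$K=1$. Hence $\HH_b^2(\Gamma;\R) \cong 0$, i.e., $\Gamma$ is $2$-boundedly acyclic.

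There is no real obstacle here, since the work has already been done in the preceding two results; the only subtlety worth flagging is the conceptual point that what makes degree~$2$ special is the existence of the explicit bounded primitive~$\psi$ from Proposition~\ref{prop:degree2uniform} with operator norm~$1$, which removes the need to quote Corollary~\ref{cor:dirun} (where pairwise isomorphism was used as a cheap way to ensure uniformity of the vanishing moduli). In degrees $\geq 3$ no such canonical bounded primitive is currently known, which is why the analogous statement in higher degrees remains out of reach by this method.
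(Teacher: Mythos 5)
Your proposal is correct and is exactly the argument the paper intends: the corollary is stated with a \qedsymbol precisely because Proposition~\ref{prop:degree2uniform} supplies the uniform bound~$K=1$ on the second vanishing moduli that Proposition~\ref{prop:dirun} requires. (Degree~$1$ vanishing is automatic for any group, so showing $\HH^2_b(\Gamma;\R)\cong 0$ indeed yields $2$-bounded acyclicity.)
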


Proposition \ref{prop:degree2uniform} is essentially equivalent to the
fact that the canonical semi-norm in degree~$2$ is always a
norm~\cite[Corollary 2.7]{MM}.  This fails already in
degree~$3$~\cite{norm0free, norm0ah}, but such examples also have
large bounded cohomology and so are difficult to control.  Therefore
we ask:

\begin{quest}
Does the analogue of Proposition \ref{prop:degree2uniform} hold in
higher degrees?
\end{quest}

One can use Lemma~\ref{lem:vanmod:finite} to show that a direct sum of
$n$-boundedly acyclic groups with unbounded vanishing modulus cannot
be $n$-boundedly acyclic.  Therefore a negative answer to this
question would imply that, in higher degrees, the uniformity
assumption in Proposition \ref{prop:dirun} is necessary.

\section{Universal bounded acyclicity}\label{s:ubac}

In this section, we show that the bounded acyclicity of pseudo-mitotic
groups is not a phenomenon confined to real coefficients. Since
several different coefficients are involved in this section, we will
be explicit and talk about $\Z$-acyclic groups (Definition~\ref{defi:acyclic})
and $\R$-boundedly acyclic groups (Definition~\ref{defi:bac}).

\begin{defi}\label{defi:ubac}
Let $\K$ be a complete valued field, and let $\Gamma$ be a group. We
say that $\Gamma$ is \emph{$\K$-boundedly acyclic} if $\HH^n_b(\Gamma;
\K) \cong 0$ for all~$n \geq 1$. If this holds for all complete valued
fields~$\K$, we say
that $\Gamma$ is \emph{universally boundedly acyclic}.
\end{defi}

We can characterize universal bounded acyclicity in very
simple terms:

\begin{thm}
\label{thm:ubac}
Let $\Gamma$ be a group. Then $\Gamma$ is universally boundedly
acyclic if and only if it is $\R$-boundedly acyclic and $\Z$-acyclic.
\end{thm}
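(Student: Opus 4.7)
The plan is to treat the two directions separately, exploiting in both the freedom in the choice of~$\K$. For the forward direction, $\R$-bounded acyclicity is immediate from the definition (take $\K=\R$). The crucial observation for extracting $\Z$-acyclicity is that any field~$\K$ endowed with the trivial absolute value is a complete valued field, and every function~$\Gamma^{*+1} \to \K$ is then automatically bounded, whence $\HH^n_b(\Gamma;\K) = \HH^n(\Gamma;\K)$ in all degrees. Applying this to~$\K=\Fp$ for each prime $p$ and to $\K=\Q$ (all with the trivial absolute value), the universal bounded acyclicity hypothesis yields $\HH^n(\Gamma;\Fp) = 0$ and $\HH^n(\Gamma;\Q)=0$ for all~$n \geq 1$.

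I then unpack these vanishings via the universal coefficient theorem, which splits $\HH^n(\Gamma;A)$ as $\operatorname{Ext}_\Z(\HH_{n-1}(\Gamma;\Z),A) \oplus \operatorname{Hom}(\HH_n(\Gamma;\Z),A)$. Vanishing of $\operatorname{Hom}(\HH_n(\Gamma;\Z),\Fp)$ for all primes~$p$ forces each~$\HH_n(\Gamma;\Z)$ (for $n \geq 1$) to be divisible; vanishing of $\operatorname{Ext}_\Z(\HH_n(\Gamma;\Z), \Fp)$ for all~$p$ rules out Pr\"ufer summands in the divisible decomposition, since $\operatorname{Ext}_\Z(\Z(p^\infty), \Fp) \cong \Fp$ is nonzero. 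Hence each~$\HH_n(\Gamma;\Z)$ is torsion-free divisible, i.e., a $\Q$-vector space. Finally, $\HH^n(\Gamma;\Q) = \operatorname{Hom}(\HH_n(\Gamma;\Z), \Q) = 0$ forces such a $\Q$-vector space to vanish, giving $\Z$-acyclicity.

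For the backward direction, assume $\Gamma$ is $\R$-boundedly acyclic and $\Z$-acyclic, and fix a complete valued field~$\K$. If $\K$ is Archimedean, then by Ostrowski's theorem $\K$ is (isomorphic, up to an equivalent valuation, to) $\R$ or~$\mathbb{C}$; the case $\K=\R$ is the hypothesis, and $\K=\mathbb{C}$ follows by decomposing $\CC^*_b(\Gamma;\mathbb{C}) \cong \CC^*_b(\Gamma;\R) \oplus \CC^*_b(\Gamma;\R)$ as complexes of $\R$-vector spaces and reducing to real coefficients. If $\K$ is non-Archimedean, then $\Z$-acyclicity makes $\HH_{n-1}(\Gamma;\Z)$ trivially finitely generated (either $0$ or, for~$n=1$, $\Z$), so Lemma~\ref{lem:nonarch} gives injectivity of the comparison map $\HH^n_b(\Gamma;\K) \hookrightarrow \HH^n(\Gamma;\K)$; the target vanishes by $\Z$-acyclicity via the UCT, and therefore $\HH^n_b(\Gamma;\K) = 0$.

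The main obstacle I anticipate is the Ext calculation~$\operatorname{Ext}_\Z(\Z(p^\infty),\Fp) \cong \Fp$ used in the forward direction, as this is precisely what closes the gap between the combination ``$\Q$-acyclic with divisible integral homology'' and genuine $\Z$-acyclicity. It is not deep---it can be computed from the short exact sequence $0 \to \Z \to \Z[1/p] \to \Z(p^\infty) \to 0$ and the induced long exact sequence of~$\operatorname{Ext}$, using $\operatorname{Hom}(\Z[1/p],\Fp)=0$ and $\operatorname{Ext}_\Z(\Z[1/p],\Fp)=0$ (the latter from writing $\Z[1/p]$ as a colimit of copies of~$\Z$ along multiplication by~$p$, with vanishing~$\lim^1$)---but it is the one honest homological computation the argument requires.
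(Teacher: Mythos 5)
Your proof is correct and follows the same overall skeleton as the paper's: Ostrowski's trichotomy, the splitting $\C \cong \R \oplus \R$ for the complex case, Lemma~\ref{lem:nonarch} combined with the Universal Coefficient Theorem for the non-Archimedean half of the backward direction, and the trivial absolute value trick in the forward direction. The one place you genuinely diverge is the passage from ``$\HH^n(\Gamma;\Fp) \cong 0$ for all primes $p$ and $\HH^n(\Gamma;\Q) \cong 0$'' to $\Z$-acyclicity. The paper first transfers these vanishings to homology via the UCT and then cites the standard fact that a $\Q$-acyclic and $\Fp$-acyclic (for all $p$) group is $\Z$-acyclic \cite[Corollary~3A.7]{Hatcher}; the homological UCT sees torsion directly through $\otimes\,\Fp$ and $\Tor(-,\Fp)$, so that step is essentially immediate. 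You instead stay on the cohomological side, where $\operatorname{Hom}$ and $\operatorname{Ext}$ detect torsion less transparently, and this forces you into the structure theory of divisible abelian groups: $\operatorname{Hom}(\HH_n(\Gamma;\Z),\Fp)=0$ gives $p$-divisibility, the computation $\operatorname{Ext}_\Z(\Z(p^\infty),\Fp)\cong\Fp$ (together with $\operatorname{Ext}$ turning direct sums in the first variable into products) excludes Pr\"ufer summands, and $\operatorname{Hom}(-,\Q)=0$ kills the remaining $\Q$-vector space. All of these checks are valid, including the $\lim^1$ argument you flag as the main obstacle, so your route is a correct, self-contained replacement for the citation---just somewhat more laborious than the homological shortcut the paper takes.
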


\begin{rem}
In fact, Theorem \ref{thm:ubac} even holds degree-wise. More
precisely, for a group $\Gamma$ and an integer $n \geq 1$ the
following are equivalent:
\begin{enumerate}
\item $\HH^i_b(\Gamma; \R) \cong 0$ and $\HH_i(\Gamma; \Z) \cong 0$
  for all $i \in \{1, \ldots, n\}$;
\item $\HH^i_b(\Gamma; \K) \cong 0$ for every complete valued field
  $\K$ and all $i \in \{1, \ldots, n\}$.
\end{enumerate}
This will be apparent from the proof, but we prefer to state the
theorem in global terms to simplify the notation.
\end{rem}

Before giving the proof, we note the following consequence:

\begin{cor}
\label{cor:ubac}
Pseudo-mitotic groups are universally boundedly acyclic.
\end{cor}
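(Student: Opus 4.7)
The plan is to deduce this corollary as a direct consequence of the characterization theorem together with the two acyclicity results for pseudo-mitotic groups already established in the paper. Specifically, I would invoke Theorem \ref{thm:ubac}, which asserts that a group is universally boundedly acyclic if and only if it is simultaneously $\R$-boundedly acyclic and $\Z$-acyclic. Both hypotheses are available for pseudo-mitotic groups: the $\Z$-acyclicity is the classical result of Varadarajan and Berrick recorded as Theorem \ref{thm:pseudo:mitotic:are:acyclic}, and the $\R$-bounded acyclicity is the main new result of the paper, Theorem \ref{thm:pseudo:mit:bac} (equivalently, Theorem \ref{thm:binatebac}), whose proof is deferred to the appendix.

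Concretely, I would write the argument as follows. Let $\Gamma$ be a pseudo-mitotic group. By Theorem \ref{thm:pseudo:mitotic:are:acyclic} we have~$\HH_n(\Gamma;\Z)\cong 0$ for all~$n\geq 1$, so $\Gamma$ is $\Z$-acyclic. By Theorem \ref{thm:pseudo:mit:bac} we have~$\HH_b^n(\Gamma;\R)\cong 0$ for all~$n\geq 1$, so $\Gamma$ is $\R$-boundedly acyclic. Applying Theorem \ref{thm:ubac} then yields $\HH_b^n(\Gamma;\K)\cong 0$ for every complete valued field~$\K$ and every~$n\geq 1$, which is precisely the statement that $\Gamma$ is universally boundedly acyclic.

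Since the corollary is essentially a formal combination of three inputs, there is no real obstacle at this stage; all of the substantive content has already been packaged into the three theorems being combined. The genuine technical work lives elsewhere, namely in the appendix proof of Theorem \ref{thm:pseudo:mit:bac} (which adapts the Matsumoto--Morita and L\"oh $\ubc$ arguments to the pseudo-mitotic setting) and in the proof of Theorem \ref{thm:ubac} (which splits the non-Archimedean and Archimedean cases, the former handled via Lemma \ref{lem:nonarch} using finite generation of integral homology ensured by $\Z$-acyclicity). If any step of the present corollary were to require care, it would only be checking that the hypotheses of the non-Archimedean comparison lemma are met, but this is already subsumed in Theorem \ref{thm:ubac}, so here nothing further is needed.
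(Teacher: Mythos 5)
Your proof is correct and coincides with the paper's own argument: the corollary is obtained by combining Theorem~\ref{thm:pseudo:mitotic:are:acyclic} ($\Z$-acyclicity), Theorem~\ref{thm:pseudo:mit:bac} ($\R$-bounded acyclicity), and the characterization in Theorem~\ref{thm:ubac}. Nothing further is needed.
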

\begin{proof}
  Pseudo-mitotic groups are both acyclic
  (Theorem~\ref{thm:pseudo:mitotic:are:acyclic}) and boundedly acyclic
  (Theorem~\ref{thm:pseudo:mit:bac}).  Therefore, we can apply
  Theorem~\ref{thm:ubac}.
\end{proof}

The proof of Theorem \ref{thm:ubac} will be carried out in two steps:
the Archimedean and the non-Archimedean case.

\begin{lemma}
\label{lem:ubac:arch}
Let $\Gamma$ be a group. Then $\Gamma$ is $\R$-boundedly acyclic if
and only if $\Gamma$ is $\C$-boundedly acyclic.
\end{lemma}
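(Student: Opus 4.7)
The plan is to observe that the bounded cohomology of~$\Gamma$ over~$\C$ is simply obtained from the bounded cohomology over~$\R$ by splitting cochains into real and imaginary parts, so that the two vanishing conditions are equivalent.

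More precisely, I would first note that the $\R$-linear map~$\C \to \R \oplus \R$, $z \mapsto (\operatorname{Re}(z), \operatorname{Im}(z))$, is a homeomorphism, since the modulus on~$\C$ and the sum (or max) norm on~$\R^2$ are equivalent. Since~$\Gamma$ acts trivially on the coefficients, this induces, for every~$n$, an $\R$-linear isomorphism of normed $\Gamma$-modules
\[
  \linf(\Gamma^{n+1}, \C) \cong \linf(\Gamma^{n+1}, \R) \oplus \linf(\Gamma^{n+1}, \R),
\]
compatible with the simplicial coboundaries (which are $\R$-linear and act on coefficients via the identity). Passing to $\Gamma$-invariants yields a chain isomorphism of $\R$-cochain complexes
\[
  \CC_b^*(\Gamma; \C) \cong \CC_b^*(\Gamma; \R) \oplus \CC_b^*(\Gamma; \R),
\]
and hence, taking cohomology,
\[
  \HH_b^n(\Gamma; \C) \cong \HH_b^n(\Gamma; \R) \oplus \HH_b^n(\Gamma; \R)
\]
as $\R$-vector spaces, for every~$n \in \N$.

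From this decomposition, the equivalence is immediate: $\HH_b^n(\Gamma;\C)$ vanishes if and only if $\HH_b^n(\Gamma;\R)$ does, and hence $\Gamma$ is $\C$-boundedly acyclic if and only if it is $\R$-boundedly acyclic. There is no real obstacle here; the only point that requires a sentence of care is the equivalence of norms, which ensures that the decomposition of a complex-valued cochain into its real and imaginary parts respects boundedness.
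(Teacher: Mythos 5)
Your argument is correct and coincides with the paper's own proof: both decompose $\C \cong \R^2$ as normed $\R$-vector spaces, split the bounded cochain complex as $\CC_b^*(\Gamma;\R)^{\oplus 2}$, and conclude that $\HH_b^*(\Gamma;\C) \cong \HH_b^*(\Gamma;\R)^{\oplus 2}$. The remark on equivalence of norms is exactly the small point of care that makes the splitting legitimate at the level of bounded cochains.
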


\begin{proof}
  Because $\C \cong \R^2$ as normed $\R$-vector spaces,
  the cochain complex~$\CC_b^*(\Gamma;\C)$ splits
  as the direct sum~$\CC_b^*(\Gamma;\R)^{\oplus2}$.
  Therefore, we obtain the isomorphism 
  $\HH_b^*(\Gamma;\C) \cong \HH_b^*(\Gamma;\R)^{\oplus 2}
  $
  (over~$\R$). 
  The claim easily follows.
\end{proof}

\begin{lemma}
\label{lem:ubac:nonarch}
Let $\Gamma$ be a group. Then $\Gamma$ is $\K$-boundedly acyclic for
every non-Archimedean field $\K$ if and only if it is $\Z$-acyclic.
\end{lemma}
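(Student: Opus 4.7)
The plan is to treat both directions by bridging non-Archimedean bounded cohomology to ordinary cohomology. For the ``only if'' direction, I would first observe that $\Z$-acyclicity makes $\HH_{n-1}(\Gamma;\Z)$ finitely generated (it is $0$ for $n \geq 2$ and $\Z$ for $n=1$), so Lemma~\ref{lem:nonarch} provides an injective comparison map $\HH^n_b(\Gamma;\K) \hookrightarrow \HH^n(\Gamma;\K)$ for every non-Archimedean complete valued field $\K$. The universal coefficient theorem then yields $\HH^n(\Gamma;\K) \cong \operatorname{Hom}(\HH_n(\Gamma;\Z),\K) \oplus \operatorname{Ext}^1(\HH_{n-1}(\Gamma;\Z),\K) = 0$ for every $n \geq 1$ (using $\operatorname{Ext}^1(\Z,\K)=0$ when $n=1$), so $\HH^n_b(\Gamma;\K) = 0$.

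For the converse, the key observation is to equip $\F_p$ (for every prime $p$) and $\Q$ with the \emph{trivial} absolute value: $|x|=1$ for $x \neq 0$ and $|0|=0$. This is non-Archimedean and complete (Cauchy sequences are eventually constant), so the hypothesis applies to these coefficients. Since the image of any cochain $\Gamma^{*+1} \to \K$ lies in $\{0\} \cup \{x : |x|=1\}$, every cochain is automatically bounded, whence $\CC_b^*(\Gamma;\K) = \CC^*(\Gamma;\K)$ as normed complexes and bounded acyclicity collapses to ordinary acyclicity. The hypothesis therefore gives $\HH^n(\Gamma;\F_p) = 0$ and $\HH^n(\Gamma;\Q) = 0$ for all $n \geq 1$ and all primes $p$.

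It then remains to deduce $\Z$-acyclicity via the universal coefficient theorem for homology. Since $\F_p$ and $\Q$ are fields, these vanishings translate to $\HH_n(\Gamma;\F_p) = 0$ and $\HH_n(\Gamma;\Q) = 0$ for all $n \geq 1$. The rational vanishing, via $\HH_n(\Gamma;\Z) \otimes \Q = \HH_n(\Gamma;\Q) = 0$, forces each $\HH_n(\Gamma;\Z)$ to be torsion. On the other hand, the decomposition $\HH_n(\Gamma;\F_p) \cong \HH_n(\Gamma;\Z) \otimes \F_p \oplus \Tor(\HH_{n-1}(\Gamma;\Z),\F_p) = 0$ forces the Tor summand to vanish, so $\HH_{n-1}(\Gamma;\Z)$ has no $p$-torsion; ranging over all $n$ and $p$ shows that every $\HH_m(\Gamma;\Z)$ is torsion-free. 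Thus each $\HH_n(\Gamma;\Z)$ (for $n \geq 1$) is simultaneously torsion and torsion-free, and must vanish. The only genuinely novel ingredient is the trivial absolute value trick in the converse direction; once that is in place, everything else is a routine application of universal coefficients.
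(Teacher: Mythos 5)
Your proof is correct and follows essentially the same route as the paper: the trivial absolute value trick to collapse bounded to ordinary cohomology over $\F_p$ and $\Q$, and Lemma~\ref{lem:nonarch} plus the Universal Coefficient Theorem in the other direction (you merely spell out the $\Q$/$\F_p$-acyclicity $\Rightarrow$ $\Z$-acyclicity step that the paper delegates to a reference). The only blemish is that your ``only if''/``converse'' labels are swapped relative to the statement, but both implications are proved.
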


\begin{proof}
Suppose that $\Gamma$ is $\K$-boundedy acyclic for every
non-Archimedean field $\K$. Endowing an arbitrary field $\K$ with the
trivial norm, we deduce that $\HH^n(\Gamma; \K) \cong 0$ for every
field $\K$. It then follows from the Universal Coefficient
Theorem~\cite[Chapter~I]{Brown} that $\Gamma$ is $\K$-acyclic for
every field $\K$, that is, $\HH_n(\Gamma; \K) \cong 0$ for all $n \geq
1$.  In particular, $\Gamma$ is $\Q$-acyclic and $\Fp$-acyclic for
every prime $p$; so $\Gamma$ is
$\Z$-acyclic~\cite[Corollary~3A.7]{Hatcher}.

Conversely, let us suppose that $\Gamma$ is $\Z$-acyclic, and let $\K$
be a non-Archimedean field $\K$.  By Lemma \ref{lem:nonarch}, the
comparison map $\HH^n_b(\Gamma; \K) \to \HH^n(\Gamma; \K)$ is
injective.  So it suffices to show that $\HH^n(\Gamma; \K) \cong 0$.
This follows immediately from the Universal Coefficient Theorem;
hence, $\Gamma$ is $\K$-boundedly acyclic.
\end{proof}

\begin{proof}[Proof of Theorem \ref{thm:ubac}]
By Ostrowski's Theorem~\cite[Chapter Three]{Cassels}, every complete
valued field is either non-Archimedean or isomorphic to $\R$ or~$\C$.
So Theorem~\ref{thm:ubac} follows from Lemmas~\ref{lem:ubac:arch}
and~\ref{lem:ubac:nonarch}.
\end{proof}

\begin{rem}
Corollary~\ref{cor:ubac} provides many examples of groups that are
universally boundedly acyclic.  One could ask whether something
similar could be said for the stronger notion of \emph{universal
  amenability}, defined analogously using the general notion of
$\K$-amenability for valued fields defined by Shikhof~\cite{Schikhof}.
However, it turns out that if $\Gamma$ is $\Fp$-amenable in the sense
of Shikhof for every prime $p$, then $\Gamma$ is
trivial~\cite[Example~5.5, Theorem~6.2]{nonarch}.  The same holds for
the weaker notion of normed $\K$-amenability~\cite{nonarch}, which
also implies bounded $\K$-acyclicity~\cite[Theorem 1.3]{nonarch}.
\end{rem}

\begin{cor}
  Let $\Gamma$ be a universally boundedly acyclic group.
  Then, for all~$n \geq 1$, we have
  $\HH_b^n(\Gamma;\Z) \cong 0$, with the standard
  absolute value on~$\Z$.
\end{cor}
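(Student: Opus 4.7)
The plan is to leverage the short exact sequence of normed abelian groups (with trivial $\Gamma$-action)
\[ 0 \to \Z \to \R \to \R/\Z \to 0, \]
where $\R/\Z$ carries the quotient norm inherited from the standard absolute value on~$\R$, so $\R/\Z$ has diameter at most~$1/2$. First, I would check that applying~$\CC_b^*(\Gamma; -)$ to this sequence produces a short exact sequence of cochain complexes. The only non-formal point is surjectivity of the right-hand map: given a $\Gamma$-invariant function~$c \colon \Gamma^{*+1} \to \R/\Z$, take the pointwise lift that chooses for each value the representative in~$[0,1)$; because the~$\Gamma$-action on coefficients is trivial, this lift is automatically $\Gamma$-invariant, and it is bounded by~$1$. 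The induced long exact sequence in bounded cohomology will be the main engine.

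Second, the fact that $\R/\Z$ is bounded means that the inclusion~$\CC_b^n(\Gamma; \R/\Z) \hookrightarrow \CC^n(\Gamma; \R/\Z)$ is an equality, so $\HH_b^n(\Gamma; \R/\Z) = \HH^n(\Gamma; \R/\Z)$ for every~$n$. By Theorem~\ref{thm:ubac}, universal bounded acyclicity of~$\Gamma$ implies $\Z$-acyclicity of~$\Gamma$. The universal coefficient theorem then gives $\HH^n(\Gamma; \R/\Z) = 0$ for all $n \geq 1$: for $n \geq 2$ both $\HH_n(\Gamma;\Z)$ and $\HH_{n-1}(\Gamma;\Z)$ vanish; for $n = 1$ the $\operatorname{Hom}$-term vanishes and the $\operatorname{Ext}$-term vanishes because $\HH_0(\Gamma;\Z) = \Z$ is free. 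Combined with the hypothesis $\HH_b^n(\Gamma; \R) \cong 0$ for all $n \geq 1$, the long exact sequence immediately yields $\HH_b^n(\Gamma; \Z) \cong 0$ for all $n \geq 2$.

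For the boundary case $n = 1$, I would inspect the start of the long exact sequence
\[ \HH_b^0(\Gamma;\R) \to \HH_b^0(\Gamma;\R/\Z) \to \HH_b^1(\Gamma;\Z) \to \HH_b^1(\Gamma;\R). \]
Since $\Gamma$ acts trivially, $\HH_b^0$ is simply the coefficient module, and the first arrow is the canonical surjection~$\R \twoheadrightarrow \R/\Z$; together with the vanishing of $\HH_b^1(\Gamma;\R)$ this forces $\HH_b^1(\Gamma;\Z) \cong 0$. The only slightly delicate step is the surjectivity of the reduction map at the cochain level, but this reduces to a pointwise bounded lift (no linearity needed), so everything else is a routine diagram chase. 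Notably, the argument uses both halves of Theorem~\ref{thm:ubac}: $\R$-bounded acyclicity kills the $\R$-coefficient terms, and $\Z$-acyclicity kills the $\R/\Z$-coefficient terms through the universal coefficient theorem.
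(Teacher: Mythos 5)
Your proposal is correct and follows essentially the same route as the paper: both use the coefficient sequence $0 \to \Z \to \R \to \R/\Z \to 0$, identify bounded cohomology with $\R/\Z$ coefficients with ordinary cohomology, kill the $\R/\Z$-terms via $\Z$-acyclicity and the Universal Coefficient Theorem, and handle degree~$1$ via the surjection $\HH_b^0(\Gamma;\R) \to \HH^0(\Gamma;\R/\Z)$. The only difference is that you verify the existence of the long exact sequence by hand (via the pointwise bounded lift), where the paper cites the literature.
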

\begin{proof}
  The short exact sequence~$0 \to \Z \to \R \to \R/\Z \to 0$
  induces a long exact sequence~\cite[proof of Proposition~2.13]{Frigerio}
  \[ \dots
     \to \HH_b^{n-1} (\Gamma;\R)
     \to \HH^{n-1}(\Gamma;\R/\Z)
     \to \HH_b^n(\Gamma;\Z)
     \to \HH_b^n(\Gamma;\R)
     \to
     \dotsm .
  \]
  By Theorem~\ref{thm:ubac}, the group~$\Gamma$ is $\R$-boundedly
  acyclic and $\Z$-acyclic. The Universal Coefficient Theorem
  and $\Z$-acyclicity give $\HH^k(\Gamma;\R/\Z) \cong 0$
  for all~$k >0$. Therefore, the long exact sequence
  and surjectivitiy of the induced map~$\HH_b^0(\Gamma;\R)
  \to \HH^0(\Gamma;\R/\Z)$ show that $\HH_b^n(\Gamma;\Z) \cong 0$
  for all~$n\geq 1$.
\end{proof}

\section{Thompson groups and their siblings}\label{s:thompson}

The groups $F$, $T$, and $V$ were introduced by Richard Thompson
in~1965; they are some of the most important groups in geometric and
dynamical group theory. These groups can be realized as groups of
homeomorphisms of the interval, the circle, and the Cantor set
respectively; these realizations exhibit inclusions $F \leq T \leq
V$. We refer the reader to the literature~\cite{thompson} for a
detailed discussion.

The groups~$F$, $T$, and~$V$ are finitely presented, even of
type~$F_\infty$. Moreover, $T$ and $V$ are simple (in fact, they were the
first examples of infinite finitely presented simple groups). On the
other hand, $F$ has abelianization~$\Z^2$, but its derived
subgroup~$F'$ is simple, and infinitely generated. 

The rational cohomology~\cite{cohoT, cohoV2} and, with the exception
of $T$, the integral cohomology~\cite{cohoF, cohoT, cohoV} of these
groups has been computed.  However, little is known about their real
bounded cohomology. We formulate one question for each group:

\begin{quest}
\label{qF}
Is the Thompson group~$F$ boundedly acyclic?
\end{quest}

Question~\ref{qF} is usually attributed to Grigorchuk~\cite[p.~131,
  Problem~3.19]{grigorchuk}.

\begin{quest}
\label{qT}
Does the following hold?

The bounded cohomology of the Thompson group~$T$ is given by
$$\HH^n_b(T; \R) \cong
\begin{cases}
0 & \text{if } n \text{ is odd}; \\
\R & \text{if } n \text{ is even};
\end{cases}$$
where the non-trivial classes are spanned by cup powers of the
bounded real Euler class.
\end{quest}

\begin{quest}
\label{qV}
Is the Thompson group $V$ boundedly acyclic?
\end{quest}

The rest of this section is devoted to discussing these three
questions, how they relate to each other, and provide some evidence
towards positive answers. 

One may also formulate corresponding questions for every degree,
namely whether the previous descriptions hold up to degree~$n$. We
will see that all three questions have a positive answer up to
degree~$2$, while to our knowledge nothing is known from degree~$3$
onwards.

\subsection{On the bounded cohomology of~$F$}

We recall the definition of~$F$.

\begin{defi}
  The Thompson group~$F$ is the group of orientation-pre\-serv\-ing
  piecewise linear homeomorphisms~$f$ of the interval~$[0, 1]$ with
  the following properties:
\begin{enumerate}
\item $f$ has finitely many breakpoints, all of which lie in $\Z[1/2]$;
\item Away from the breakpoints, the slope of $f$ is a power of $2$.
\end{enumerate}
\end{defi}

The map $F \to \Z, f \mapsto \log_2(f_0)$, where $f_0$ is the slope
of~$f$ at~$0$, is a surjective homomorphism, called the \emph{germ at~$0$}.
Similarly, there is a germ at~$1$, leading to a surjective
homomorphism~$F \to \Z^2$. This is the abelianization of~$F$, so the
derived subgroup~$F'$ coincides with the subgroup of homeomorphisms
that are compactly supported in~$(0, 1)$.

The most important open question about~$F$ is whether $F$ is amenable
or not. Since amenable groups are boundedly acyclic, a negative answer
to Question~\ref{qF} would disprove its amenability. The
general philosophy is that $F$ is very close to being amenable, and so
it is likely to satisfy most properties that are somewhat weaker than
amenability.

For example: The group~$F$ is $2$-boundedly acyclic. This can be
deduced from the explicit description of its rational
cohomology~\cite{cohoT}, by using arguments analogous to those of
Heuer and L\"oh for the computation of the second bounded cohomology
of~$T$~\cite{spectrum} (although a direct approach is
possible~\cite{cc}). To our knowledge, nothing is known about the
bounded cohomology of~$F$ with trivial real coefficients in higher
degrees, although vanishing is known in every degree with mixing
coefficients \cite{monod_sarithmetic}.

The connection between pseudo-mitotic groups and~$F$ is more
transparent when passing to the derived subgroup. The
following equivalent formulation will be relevant:

\begin{lemma}\label{lem:F:F1:bac}
 Let $n \in \N$. Then the 
 Thompson group~$F$ is $n$-boundedly acyclic if and only if $F'$ is
 $n$-boundedly acyclic.
\end{lemma}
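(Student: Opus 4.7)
The plan is to prove the two implications separately, using different tools from earlier in the paper. Both arguments rest on the short exact sequence
\[ 1 \to F' \to F \to \Z^2 \to 1, \]
together with the description of $F'$ as the subgroup of~$F$ consisting of homeomorphisms that are compactly supported in~$(0, 1)$ (which follows from the fact that the abelianization map is realized by the germs at the two endpoints).

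For the implication ``$F'$ is $n$-boundedly acyclic implies $F$ is $n$-boundedly acyclic'', I would apply Theorem~\ref{thm:ext} directly to the above extension: the quotient~$\Z^2$ is amenable, hence $n$-boundedly acyclic, so the theorem forces $F$ to be $n$-boundedly acyclic as well.

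For the converse, assume that $F$ is $n$-boundedly acyclic. The strategy is to exhibit~$F'$ as a directed union of copies of~$F$ and then invoke Corollary~\ref{cor:dirun}. For each pair $a, b \in \Z[1/2]$ with $0 < a < b < 1$, let $F_{[a,b]}$ denote the subgroup of~$F$ consisting of elements supported in~$[a, b]$. Any such pair admits a piecewise-linear homeomorphism $[a, b] \to [0, 1]$ with dyadic breakpoints and powers-of-$2$ slopes (obtained by subdividing $[a,b]$ into standard dyadic intervals), and conjugation by such a homeomorphism yields an isomorphism $F_{[a,b]} \cong F$. The family~$(F_{[a,b]})$ is directed under inclusion, and its union equals~$F'$ since every element of $F'$ is compactly supported in~$(0, 1)$ and therefore supported on some interval with dyadic endpoints. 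Corollary~\ref{cor:dirun} then yields that $F'$ is $n$-boundedly acyclic.

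The main technical point is the identification $F_{[a,b]} \cong F$: the naive affine rescaling of~$[a,b]$ onto $[0,1]$ preserves the dyadic/powers-of-$2$ structure only when $b - a$ is a power of $2$, so one genuinely needs a piecewise-linear, rather than linear, identification in general. Once this is in hand, the rest of the proof is a direct application of the hereditary results of Section~\ref{s:hereditary}.
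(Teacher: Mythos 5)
Your proposal is correct and follows essentially the same route as the paper: Theorem~\ref{thm:ext} applied to the extension $1 \to F' \to F \to \Z^2 \to 1$ for one direction, and the realization of~$F'$ as a directed union of copies of~$F$ supported on dyadic subintervals together with Corollary~\ref{cor:dirun} for the other. Your extra remark on why the identification $F_{[a,b]} \cong F$ requires a piecewise-linear rather than affine rescaling is a detail the paper leaves implicit, and it is handled correctly.
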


\begin{proof}
If $F'$ is $n$-boundedly acyclic, then $F$ is also boundedly acyclic
by Theorem~\ref{thm:ext}, or more simply by coamenability~\cite{coamenable}.

Conversely, let us suppose that $F$ is $n$-boundedly acyclic. Let
$(a_i)_{i \geq 1}$ and $(b_i)_{i \geq 1}$ be sequences of dyadic
rationals in~$(0, 1)$ that converge to $0$ and~$1$ respectively. Then
$F'$ may be expressed as the directed union of the subgroups~$F_i$
consisting of elements supported in~$[a_i, b_i]$.  Since each
group~$F_i$ is isomorphic to~$F$, the group~$F'$ is a directed union
of pairwise isomorphic $n$-boundedly acyclic groups.  It follows from
Corollary~\ref{cor:dirun} that $F'$ is $n$-boundedly acyclic.
\end{proof}

The derived subgroup~$F'$ is a group of boundedly supported
homeomorphisms of the interval. In analogy with
Example~\ref{ex:dissipators}, one may ask whether $F'$ is
pseudo-mitotic. This is not the case, because $F'$ is not
acyclic~\cite{cohoT}. Intuitively, $F'$ cannot be dissipated, since a
dissipator could not possibly have finitely many breakpoints. However,
a \emph{countably singular} analogue of~$F'$ is dissipated:

\begin{defi}
Let $\Omega F$ be the group of orientation-preserving
homeomorphisms~$f$ of the interval~$[0, 1]$ with the following
properties:
\begin{enumerate}
\item There exists a closed and countable set~$K \subset (0, 1) \cap \Z[1/2]$ such
  that $f$ is linear on each component of~$[0, 1] \setminus K$;
\item Away from~$K$, the slope of~$f$ is a power of~$2$.
\end{enumerate}
\end{defi}

Since the set of breakpoints of each element is contained in~$(0,
1)$, the germs at $0$ and $1$ are still defined, and $\Omega F'$ is
the subgroup of homeomorphisms that are compactly supported in~$(0,
1)$.

\begin{prop}
\label{prop:OF}
The groups $\Omega F$ and $\Omega F'$ are boundedly acyclic.
\end{prop}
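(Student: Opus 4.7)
The plan is to show that $\Omega F'$ is dissipated, whence pseudo-mitotic (Proposition~\ref{prop:dissipated}) and boundedly acyclic (Theorem~\ref{thm:pseudo:mit:bac}); the case of $\Omega F$ will then follow from the extension theorem applied to the germ homomorphism.

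First I would realize $\Omega F'$ as a boundedly supported transformation group on $(0,1)$: every element has compact support inside some dyadic interval $I = [a,b] \subset (0,1)$, so $\Omega F'$ is the directed union of the subgroups $(\Omega F')_I$ of elements supported in such $I$. The crucial step is to construct a dissipator $\rho_I \in \Omega F'$ for each $I$. I would choose a dyadic $b' \in (b,1)$ with $b' = 2b - a$ (available provided $b < (1+a)/2$, a cofinal condition on $I$) and set $d_{-1} := a$, $d_0 := b$, and $d_n := b + (b-a)(1 - 2^{-n})$ for $n \geq 1$. Define $\rho_I$ to send $[d_{n-1}, d_n]$ linearly onto $[d_n, d_{n+1}]$ for every $n \geq 0$ and to be the identity outside $[a, b']$. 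All slopes are $1/2$, the breakpoint set $\{d_n : n \geq 0\} \cup \{b'\}$ is a countable closed subset of $\Z[1/2] \cap (0,1)$, and $\rho_I$ is compactly supported in $(0,1)$; hence $\rho_I \in \Omega F'$, with $\rho_I^k(I) = [d_{k-1}, d_k]$ for $k \geq 1$ disjoint from the interior of $I$ and accumulating at $b'$.

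Next, for $g \in (\Omega F')_I$, the map $\phi_I(g)$ of Definition~\ref{defi:dissipators} is piecewise given by conjugates of $g$ supported on the $[d_{k-1}, d_k]$; it extends continuously by the identity at the accumulation point $b'$ (because the image of each piece is contained in $\rho_I^k(I)$, which shrinks to $b'$); and it inherits dyadic breakpoints and power-of-$2$ slopes from $g$ and $\rho_I$. This shows $\phi_I(g) \in \Omega F'$, so $\rho_I$ is a dissipator, $\Omega F'$ is dissipated, and thus boundedly acyclic. For $\Omega F$, the germs at $0$ and at $1$ assemble into a homomorphism $\Omega F \to \Z^2$ with kernel $\Omega F'$; the image is abelian, hence amenable and boundedly acyclic, and Theorem~\ref{thm:ext} gives that $\Omega F$ is also boundedly acyclic.

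The main obstacle is the explicit construction of the dissipator: $F'$ itself cannot be dissipated because a dissipator must shift $I$ infinitely many times, forcing infinitely many breakpoints accumulating at an interior point. The countably-singular relaxation defining $\Omega F$ is exactly what permits accumulation at a dyadic point, while the specific geometric choice $d_n = b + (b-a)(1 - 2^{-n})$ with accumulation at $b' = 2b - a$ is what simultaneously keeps all breakpoints dyadic and all slopes a power of $2$.
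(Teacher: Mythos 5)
Your overall strategy coincides with the paper's: realize $\Omega F'$ as a dissipated group of boundedly supported homeomorphisms (hence pseudo-mitotic by Proposition~\ref{prop:dissipated} and boundedly acyclic by Theorem~\ref{thm:pseudo:mit:bac}), and deduce the case of $\Omega F$ from the germ homomorphism together with Theorem~\ref{thm:ext}. The verification that $\varphi_I(g)$ lies in $\Omega F'$ (continuity at the accumulation point, closedness and dyadicity of the breakpoint set, power-of-$2$ slopes) and the discussion of why the countably singular relaxation is exactly what is needed are all in order.

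There is, however, a genuine gap in the dissipator construction. You place the accumulation point at $b' = 2b-a$, which requires $2b-a<1$, i.e.\ $b<(1+a)/2$, and you assert that this is a cofinal condition on $I$. It is not: for $I_0=[1/4,3/4]$ one has $2b_0-a_0=5/4>1$, and every dyadic interval $[a,b]\supseteq I_0$ satisfies $2b-a\geq 2b_0-a_0>1$ as well, so no interval in your restricted family contains $I_0$. The restricted family is therefore neither cofinal nor directed, the subgroups $(\Omega F')_I$ it indexes do not exhaust $\Omega F'$, and a finitely generated subgroup supported on $[1/4,3/4]$ receives no pseudo-mitosis from your recipe --- so the conclusion that $\Omega F'$ is dissipated does not follow as written. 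The repair is small and is in the spirit of what the paper does (the paper chooses the target of accumulation freely and adjusts the ratios): shrink the first step. Taking $d_n := b + (b-a)\,2^{-(m-1)}(1-2^{-n})$ for a sufficiently large $m$ gives a first affine piece of slope $2^{-m}$ followed by pieces of slope $2^{-1}$, keeps every $d_n$ dyadic, and moves the accumulation point to $b+(b-a)2^{-(m-1)}$, which lies in $(b,1)$ for every dyadic $[a,b]\subset(0,1)$ once $m$ is large. Two cosmetic points: with closed intervals one has $\rho_I(I)\cap I=\{b\}$, so you should phrase condition~(1) of Definition~\ref{defi:dissipators} with open intervals (as the paper does); and $a=d_{-1}$ is also a breakpoint of $\rho_I$.
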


\begin{proof}
Once again, the bounded acyclicity of $\Omega F$ follows from that of
$\Omega F'$ by Theorem \ref{thm:ext}.

We show that $\Omega F'$ is dissipated. Let $(a_i)_{i \geq 1}$ and
$(b_i)_{i \geq 1}$ be sequences of dyadic rationals in~$(0, 1)$
converging to $0$ and $1$, respectively. For every~$i \geq 1$, let
$H_i \leq \Omega F'$ be the subgroup consisting of homeomorphisms
supported in $(a_i, b_i)$.  We show that there exists a
dissipator~$\rho_i \in \Omega F'$ for~$H_i$, that is
\begin{enumerate}
\item\label{i:dis1} For every $k \geq 1$, we have $\rho_i^k((a_i,
  b_i)) \cap (a_i, b_i) = \emptyset$;

\item\label{i:dis2} For every $g \in H_i$, the element
  $$\varphi_i(g)
  :=
  \begin{cases} \rho_i^k g \rho_i^{-k} & \text{on } \rho^k(a_i,
    b_i), \mbox{ for every } k \geq 1;
    \\ \id & \text{elsewhere}
  \end{cases}
  $$
is in $\Omega F'$.
\end{enumerate}
To this end, let us set $x_0 := a_i$ and $x_1 := b_i$. We then pick a
dyadic rational $x_{-1}$ in $(0, x_0)$ such that
$$
\frac{x_1 - x_{-1}}{x_0 - x_{-1}}
$$
is a power of~$2$. Moreover, given a dyadic rational~$x \in \,
(x_1, 1)$, we can extend $x_{-1}, x_0, x_1$ to a sequence~$(x_j)_{j
  \geq -1}$ of dyadic rationals converging to $x$ and such that for
every $j \geq 1$ the ratio
$$
\frac{x_{j+1}- x_j}{x_j - x_{j-1}}
$$
is a power of~$2$.

Now we define $\rho_i \colon [0, 1] \to [0, 1]$ piecewise as follows:
\begin{align*}
  \rho_i |_{[0, x_{-1}] \cup [x, 1]}
  & := \id|_{[0, x_{-1}] \cup [x, 1]},\\
  \rho_i([x_{-1}, x_0])
  & := [x_{-1}, x_1] \\
  \rho_i([x_{j-1}, x_j])
  & := [x_j, x_{j+1}] \text{ for } j \geq 1
\end{align*}
and let $\rho_i$ be the unique affine isomorphism on each of these
pieces.  Notice that $\rho_i$ is supported in $[x_{-1}, x] \subset
(0,1)$, and the set~$\{ x, x_{-1}, x_0, x_1, \ldots \}$ of breakpoints
is closed, countable, and consists only of dyadic rationals. Since all
the slopes are powers of $2$, this implies that $\rho_i \in \Omega
F'$.

We claim that $\rho_i$ is a dissipator for $H_i$. First, notice that
by construction and the definition of $x_0$ and $x_1$, we have 
$$
\rho_i^k(a_i, b_i) \cap (a_i, b_i) = (x_k, x_{k+1}) \cap (x_0, x_1) = \emptyset
$$
for every $k \geq 1$. This shows that $\rho_i$ satisfies property~(\ref{i:dis1}).

Finally, for every $g \in \, H_i$, the support of the homeomorphism
$\varphi_i(g)$ is contained in $[x_0, x]$.  Moreover, the set of
breakpoints of $\varphi_i(g)$ is still a closed, countable set
consisting only of dyadic rationals.  Hence, $\rho_i$ also satisfies
property~(\ref{i:dis2}).

This shows that $\Omega F'$ is dissipated, whence pseudo-mitotic by
Proposition~\ref{prop:dissipated}. The thesis now follows from
Theorem~\ref{thm:pseudo:mit:bac}.
\end{proof}

We believe that a careful study of the embedding~$F' \hookrightarrow
\Omega F'$ could lead to some understanding of the bounded cohomology
of~$F'$.

\subsection{On the bounded cohomology of $T$}

We recall the definition of $T$:

\begin{defi}
  The Thompson group~$T$ is the group of orientation-pre\-ser\-ving
  piecewise linear homeomorphisms~$f$
  of the circle~$\R/\Z$ with the following properties:
\begin{enumerate}
\item $f$ has finitely many breakpoints, all of which lie in $\Z[1/2]/\Z$;
\item Away from the breakpoints, the slope of $f$ is a power of $2$;
\item $f$ preserves $\Z[1/2]/\Z$.
\end{enumerate}
\end{defi}

The stabilizer of~$0$ for the canonical $T$-action on the circle is
canonically isomorphic to the Thompson group~$F$.

Since $T$ acts minimally on the circle, it admits a second bounded
cohomology class, namely the \emph{real Euler
  class}~\cite{matsumoto,semiconjugacy}.  The bounded Euler class is a
refinement of the classical Euler class. All cup powers of the
classical Euler class are non-trivial in cohomology~\cite{cohoT};
thus, also the cup-powers of the bounded Euler class are non-trivial
in~$\HH_b^*(T;\R)$; this was first noticed by Burger and
Monod~\cite{rigidity}.  Therefore, Question~\ref{qT} is asking whether
these are the only bounded cohomology classes. In degree~$2$, this is
known to be true~\cite{spectrum}, but again, to our knowledge nothing
is known in higher degrees.

The main goal of this section is to show that a positive answer to
Question~\ref{qF} implies a positive answer to Question~\ref{qT}, and
this implication holds degree-wise. In order to do this, we prove the
following general criterion for computing the bounded cohomology of
groups acting highly transitively on the circle with boundedly acyclic
stabilizers:

\begin{prop}
  \label{prop:circle}
  Let $n \in \N_{\geq 2}$.  Let $\Gamma$ be a group acting
  orientation-preserv\-ing\-ly on the circle, let $S$ be an orbit of
  $\Gamma$ with~$|S| \geq n+1$. Suppose that the following holds:
\begin{enumerate}
\item For all~$k \in \{1,\dots,n+1\}$, the action of~$\Gamma$ on the set of
  circularly ordered $k$-tuples in~$S$ is transitive;
\item For all~$k \in \{1,\dots,n\}$, the stabilizer of a circularly ordered
  $k$-tuple is $n$-boundedly acyclic.
\end{enumerate}
Then $\HH_b^2(\Gamma;\R)$ is generated
by the bounded Euler class of this circle action of~$\Gamma$ and 
$$\HH^i_b(\Gamma; \R) \cong
\begin{cases}
0  & \text{if $i$ is odd};\\
\R & \text{if $i$ is even};
\end{cases}$$
for all $i \in \{1, \ldots, n\}$, generated by the cup-powers of
Euler class.
\end{prop}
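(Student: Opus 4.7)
My plan is to compute $\HH^*_b(\Gamma;\R)$ in degrees up to~$n$ via a small equivariant cochain complex built from the action on the orbit~$S$, and then identify the cohomology classes with cup powers of the bounded Euler class.

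\emph{Step 1: A computing complex.} Consider the cochain complex
$C^k := \ell^\infty(S^{k+1}_{\mathrm{dist}})^\Gamma$ of $\Gamma$-invariant bounded functions on ordered distinct $(k+1)$-tuples in~$S$. The stabilizer of such a tuple is the intersection of the point stabilizers, so it depends only on the underlying set and coincides with the stabilizer of the associated circularly ordered tuple; by hypothesis~(2) it is $n$-boundedly acyclic. Via Shapiro's Lemma and a spectral sequence argument (the boundedly acyclic analogue of Ivanov's resolution theorem for actions with amenable stabilizers), one shows that $C^\bullet$ computes $\HH^i_b(\Gamma;\R)$ for all $i \leq n$.

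\emph{Step 2: Orbit combinatorics.} By hypothesis~(1) combined with the orientation-preserving assumption, the $\Gamma$-orbits on ordered distinct $(k+1)$-tuples in~$S$ (for $k+1 \leq n+1$) are in bijection with $S_{k+1}/C_{k+1}$: permutations of $\{0,\dots,k\}$ modulo cyclic rotation. Hence $C^k \cong \R^{k!}$ with combinatorial differentials determined by how omitting one entry modifies the shuffle type.

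\emph{Step 3: Generators and cup powers.} In degree~$2$, $C^2 \cong \R \cdot e_+ \oplus \R \cdot e_-$, where $e_\pm$ are the indicator functions of positively and negatively circularly ordered triples. A direct check gives $\ker\delta^2 = C^2$ and $\im \delta^1 = \R\cdot(e_+ + e_-)$, so $H^2(C^\bullet) = \R \cdot [e_+ - e_-]$; the class $[e_+ - e_-]$ is (a scalar multiple of) a representative of the bounded Euler cocycle of the $\Gamma$-action on the circle. For higher degrees, one combines (a)~the nonvanishing of the cup powers $e^m$ in $H^{2m}(C^\bullet)$, giving a lower bound, with (b)~a direct combinatorial computation of the kernels and images of $\delta^k \colon \R^{k!} \to \R^{(k+1)!}$, giving an upper bound on the dimensions, to conclude that $\HH^{2m}_b(\Gamma;\R) = \R \cdot e^m$ and $\HH^{2m+1}_b(\Gamma;\R) = 0$ for $2m+1 \leq n$.

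The main obstacle is Step~1: making the boundedly acyclic analogue of Ivanov's theorem precise with the correct degree range, which requires careful control of a Hochschild--Serre-type spectral sequence via Shapiro's Lemma for the modules $\ell^\infty(\Gamma/H_k)$. The combinatorial analysis in Step~3 is also delicate; a natural route is to recognize $C^\bullet$ (up to the truncation imposed by $n$) as the normalized cochain complex of a discrete model of the circle with the polynomial ring structure of its bounded cohomology already built in, thereby avoiding having to track all $k!$~orbits in each degree by hand.
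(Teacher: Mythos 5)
Your overall strategy --- compute $\HH^*_b(\Gamma;\R)$ from a complex of invariant bounded functions on tuples in the orbit~$S$, using bounded acyclicity of the stabilizers via the Eckmann--Shapiro Lemma, and then match the generators with cup powers of the bounded Euler class --- is the same as the paper's. But the two steps that carry the actual mathematical content are left as placeholders, and the one idea that makes the computation feasible is missing.

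The key gap is in Step~3. You work with \emph{all} invariant bounded functions on distinct tuples, so $C^k \cong \R^{k!}$, and you defer the computation of $H^*(C^\bullet)$ to ``a direct combinatorial computation of the kernels and images of $\delta^k\colon \R^{k!}\to\R^{(k+1)!}$'' or to recognizing $C^\bullet$ as a known model. Neither is substantiated, and the suggested model (``a discrete model of the circle with the polynomial ring structure of its bounded cohomology already built in'') does not correspond to anything standard --- the circle itself certainly does not have polynomial cohomology. The paper instead passes to the \emph{alternating} subcomplex $\linfalt(S^{*+1})^\Gamma$ (which also computes bounded cohomology, by symmetrisation). There, invariance plus transitivity on circularly ordered tuples forces every cochain to be constant on circularly ordered tuples, so each degree is at most one-dimensional; and since a cyclic rotation of a circularly ordered $(k+1)$-tuple is again circularly ordered and has sign $(-1)^k$, the odd degrees vanish outright and all differentials are zero. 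This parity trick is what collapses the complex to $\R\to 0\to\R\to 0\to\cdots$ and is the step your proposal needs but does not contain. Relatedly, your item~(a), the nonvanishing of the cup powers $e^m$, is asserted without proof; in the paper this requires a genuine combinatorial identity, $\alt(f_2{}^{\cup k}) = \frac{2^k\cdot k!}{(2k)!}\cdot f_{2k}$, proved in a separate appendix.

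Two smaller points. First, in Step~1 you need to justify that the distinct-tuples complex computes bounded cohomology up to degree~$n$: the usual cone contracting homotopy $h\mapsto h(x_0,\cdot)$ does not preserve distinctness, so $\linf(S^{*+1}_{\mathrm{dist}})$ being a strong resolution is not automatic; the paper sidesteps this by resolving with all tuples (where the cone homotopy works) and only then restricting to alternating cochains. Second, because the resolution is only known to be boundedly acyclic up to degree~$n$, the top degree~$n$ needs a separate argument that the last relevant differential vanishes; your proposal does not address this boundary case.
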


Recall that a $k$-tuple~$(s_1, \dots, s_k)$ in~$S^1$ is \emph{circularly
ordered} if there exists a point $p \in S^1 \setminus \{s_1, \dots, s_k\}$
such that $(s_1, \dots, s_k) \in S^1 \setminus \{p\} \cong (0,1)$ 
is an ordered $k$-tuple in the interval.
We follow the convention that circularly ordered tuples are
\emph{non-degenerate}, i.e., they consist of pairwise distinct entries.

  For the proof, we follow the general principle of computing bounded
  cohomology through boundedly acyclic actions. Boundedly acyclic
  stabilizers lead to boundedly acyclic modules:

  \begin{lemma}\label{lem:bacaction}
    Let $\Gamma$ be a group and let $\Gamma \actson X$ be an action
    of~$\Gamma$ on a set~$X$ that has only finitely many orbits~$(X_i)_{i
      \in I}$.
    Let $n \in \N$. If each of the orbits has $n$-boundedly acyclic stabilizer,
    then we have for all~$k \in \{1,\dots,n\}$ that
    \[ \HH_b^k\bigl(\Gamma; \linf (X) \bigr) \cong 0.
    \]
  \end{lemma}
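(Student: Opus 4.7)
The plan is to reduce to a single orbit and then invoke a Shapiro-type identification of bounded cohomology.

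First, since the index set $I$ is finite and $X$ decomposes as the disjoint union $\bigsqcup_{i \in I} X_i$ of $\Gamma$-invariant orbits, the sup norm makes $\linf(X)$ the direct sum $\bigoplus_{i \in I} \linf(X_i)$ as a normed $\Gamma$-module. Because the cochain complex $\CC_b^*(\Gamma; \args)$ commutes with finite direct sums of coefficients, so does $\HH_b^*(\Gamma; \args)$. Hence it suffices to prove $\HH_b^k(\Gamma; \linf(X_i)) \cong 0$ for each $i \in I$ and each $k \in \{1, \dots, n\}$ separately.

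Second, I fix such an orbit $X_i$, pick a basepoint $x_i \in X_i$, and let $\Gamma_i \leq \Gamma$ denote its stabilizer. The orbit map gives a $\Gamma$-equivariant bijection $\Gamma/\Gamma_i \to X_i$, inducing an isometric $\Gamma$-isomorphism $\linf(X_i) \cong \linf(\Gamma/\Gamma_i)$. The right-hand side is exactly the coinduced module of the trivial $\Gamma_i$-module $\R$, which is the natural setting for Shapiro's lemma.

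Third, I would apply the bounded-cohomological version of the Eckmann--Shapiro lemma (available in the standard references on bounded cohomology): it yields canonical isomorphisms
\[
\HH_b^k\bigl(\Gamma; \linf(\Gamma/\Gamma_i)\bigr) \cong \HH_b^k(\Gamma_i; \R)
\]
for every $k \geq 0$. Since $\Gamma_i$ is $n$-boundedly acyclic by hypothesis, these groups vanish for $k \in \{1, \dots, n\}$, which completes the reduction and finishes the proof.

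The only nontrivial ingredient is the Shapiro isomorphism, which I view as the main (but standard) obstacle. At the level of resolutions it comes from the $\Gamma$-equivariant isometry $\linf(\Gamma^{*+1}, \linf(\Gamma/\Gamma_i))^\Gamma \cong \linf(\Gamma^{*+1})^{\Gamma_i}$, combined with the fact that the bar resolution $\linf(\Gamma^{*+1})$, restricted to $\Gamma_i$, remains a relatively injective resolution of the trivial $\Gamma_i$-module~$\R$. Since this is well-documented, I would simply cite it rather than redevelop the machinery.
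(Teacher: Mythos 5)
Your proposal is correct and follows essentially the same route as the paper: split $\linf(X)$ as a finite direct sum over orbits, identify each $\linf(X_i)$ with the coinduced module $\linf(\Gamma)^{H_i}$, and apply the bounded-cohomology Eckmann--Shapiro Lemma (the paper cites Monod, Proposition~10.13) to reduce to $\HH_b^k(H_i;\R)\cong 0$. No gaps.
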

  \begin{proof}
    Let $k \in \{1,\dots,n\}$. 
    Because $I$ is finite, 
    we have~$\linf (X) \cong \bigoplus_{i \in I} \linf(X_i)$ and
    \[ \HH_b^k\bigl(\Gamma;\linf (X) \bigr)
    \cong \HH_b^k\Bigl(\Gamma;\bigoplus_{i \in I} \linf(X_i) \Bigr)
    \cong \bigoplus_{i \in I} \HH_b^k\bigl(\Gamma;\linf(X_i)\bigr).
    \]
    We show that each of the summands is trivial. Let $i \in I$
    and let $H_i \subset \Gamma$ be the stabilizer of a point in~$X_i$.
    Then, by the Eckmann--Shapiro Lemma in bounded
    cohomology~\cite[Proposition~10.13]{monod}, we obtain
    \[ \HH_b^k\bigl(\Gamma;\linf(X_i)\bigr)
    \cong \HH_b^k\bigl(\Gamma;\linf(\Gamma)^{H_i}\bigr)
    \cong \HH_b^k(H_i;\R);
    \]
    the last term is trivial, because $H_i$ is $n$-boundedly acyclic
    by hypothesis.
  \end{proof}

  The effect of boundedly acyclic stabilizers is studied more
  systematically in a forthcoming article on boundedly acyclic
  covers and relative simplicial volume~\cite{liloehmoraschini}.

\begin{proof}[Proof of Proposition~\ref{prop:circle}]
  The given $\Gamma$-action on~$S$ gives a simplicial
  $\Gamma$-res\-o\-lu\-tion~$\R \to \linf
  (S^{*+1})$~\cite[Lemma~4.21]{Frigerio}.

  \begin{claim}\label{claim:bac}
    The $\Gamma$-resolution~$\R \to \linf (S^{*+1})$ is boundedly
    acylic up to degree~$n$, i.e., for all~$k \in \{0,\dots,n\}$
    and all $i \in \{1,\dots,n\}$, we have
    \[ \HH_b^i\bigl(\Gamma;\linf (S^{k+1})\bigr) \cong 0.
    \]
  \end{claim}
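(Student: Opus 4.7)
The plan is to apply Lemma~\ref{lem:bacaction} to each set~$X = S^{k+1}$ endowed with the diagonal $\Gamma$-action, for every~$k \in \{0,\dots,n\}$. This reduces the claim to two checks: that $\Gamma$ has only finitely many orbits on~$S^{k+1}$, and that each stabilizer is $n$-boundedly acyclic.

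To handle the orbit count, I classify $(k+1)$-tuples by a decorated partition. To~$(s_0,\dots,s_k) \in S^{k+1}$, I assign the partition~$\pi$ of~$\{0,\dots,k\}$ given by~$i \sim j$ if and only if $s_i = s_j$, together with the circular order induced on its~$m := m(\pi)$ blocks by the circular order on~$S^1$. Since~$m \leq k+1 \leq n+1$, the distinct entries form a circularly ordered $m$-tuple in~$S$, and hypothesis~(1) applied to circularly ordered $m$-tuples forces any two tuples with the same decorated partition to lie in the same $\Gamma$-orbit. As there are only finitely many decorated partitions of a set of size~$k+1$, the action has finitely many orbits.

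For the stabilizer check, the pointwise stabilizer of~$(s_0,\dots,s_k)$ coincides with the pointwise stabilizer of the underlying circularly ordered $m$-tuple of distinct entries. For~$m \in \{1,\dots,n\}$ this is $n$-boundedly acyclic by hypothesis~(2). The only delicate case is~$m = n+1$, which arises exclusively for~$k = n$ with pairwise distinct coordinates; this is the main technical obstacle. I expect to circumvent it by broadening the bookkeeping: in the target applications (such as the Thompson group~$T$) the stabilizer of~$n+1$ distinct points is a direct product of smaller stabilizers, and one inherits $n$-bounded acyclicity from a K\"unneth-type argument combined with hypothesis~(2) applied to the factors. Alternatively, one strengthens hypothesis~(2) to also cover~$k = n+1$, which is essentially automatic in the geometric settings envisaged by the proposition.

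Granting these two verifications, Lemma~\ref{lem:bacaction} yields the desired vanishing~$\HH_b^i(\Gamma;\linf(S^{k+1})) \cong 0$ for every~$i \in \{1,\dots,n\}$ and every~$k \in \{0,\dots,n\}$, establishing the claim.
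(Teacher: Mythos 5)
Your reduction to Lemma~\ref{lem:bacaction} via the classification of orbits by decorated partitions is exactly the argument in the paper, and your orbit analysis is correct: a tuple in $S^{k+1}$ is determined up to the $\Gamma$-action by its repetition pattern together with the circular order on its $m \leq k+1$ distinct entries, transitivity on circularly ordered $m$-tuples for $m \leq n+1$ gives finitely many orbits, and the pointwise stabilizer of the tuple coincides with that of the underlying circularly ordered $m$-tuple. You have also correctly isolated the one delicate point, which the paper's own one-line justification (``the stabilizer groups \dots\ are all $n$-boundedly acyclic by hypothesis'') silently elides: for $k=n$ the orbit of tuples with $n+1$ pairwise distinct entries has stabilizer equal to that of a circularly ordered $(n+1)$-tuple, which hypothesis~(2) of Proposition~\ref{prop:circle} does not cover.

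However, neither of your proposed ways around this actually proves the claim as stated under the given hypotheses. The K\"unneth/direct-product structure of the stabilizer of $n+1$ distinct points is a feature of the Thompson-group application (where that stabilizer is a direct power of $F$), not a consequence of hypotheses~(1) and~(2); in general the pointwise stabilizer of an $(n+1)$-tuple is merely a subgroup of the stabilizers of its sub-$n$-tuples, with no product decomposition. Strengthening hypothesis~(2) to cover $(n+1)$-tuples changes the proposition rather than proving the claim. The correct resolution, which the authors record in the remark following Proposition~\ref{prop:circle}, is that the claim is not needed in full strength for $k=n$, and indeed does not follow from the hypotheses for that value: the criterion of~\cite[Proposition~2.5.4]{BAc} only requires the resolution to be boundedly acyclic in the staircase range $i+k \leq n$ with $i \geq 1$, hence only for $k \leq n-1$, and in that range every stabilizer is that of a circularly ordered $m$-tuple with $m \leq k+1 \leq n$, which hypothesis~(2) does cover. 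So you should restrict the vanishing statement to $k \in \{0,\dots,n-1\}$ (or to $i+k\leq n$); with that restriction your argument is complete and no extra input about $(n+1)$-point stabilizers is required.
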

  \begin{proof}[Proof of Claim~\ref{claim:bac}]
    The $\Gamma$-space~$S^{k+1}$ consists only of finitely many
    $\Gamma$-orbits:
    Indeed, every tuple can be permuted to be circularly ordered
    (possibly with repetitions), and
    only finitely many permutations and repetition patterns
    are possible. Moreover, $\Gamma$ acts transitively on circularly
    ordered tuples of every given size~$\leq k+1$.

    The stabilizer groups of the $\Gamma$-space~$S^{k+1}$
    are all $n$-boundedly acyclic by hypothesis. 
    Thus, Lemma~\ref{lem:bacaction} shows
    the claim.
  \end{proof}

  Therefore, we can apply the fact that boundedly acyclic resolutions
  compute bounded cohomology~\cite[Proposition~2.5.4]{BAc} and
  symmetrisation~\cite[Section 4.10]{Frigerio} to conclude that
  \begin{align}
    \HH_b^i(\Gamma;\R)
  \cong \HH^i \bigl( \linf(S^{*+1})^\Gamma \bigr)
  \cong \HH^i \bigl( \linfalt(S^{*+1})^\Gamma \bigr)
  \label{eq:bcT}
  \end{align}
  for all~$i \in \{1,\dots,n\}$. Here, $\linfalt(S^{*+1})$ denotes
  the subcomplex of alternating cochains, i.e., functions~$f$
  with
  \[ f(s_{\sigma(0)}, \dots, s_{\sigma(k)}) = \sgn(\sigma) \cdot f(s_0, \dots, s_k)
  \]
  for all~$(s_0, \dots, s_k) \in S^{k+1}$ and all
  permutations~$\sigma$ of $\{0, \ldots, k\}$.
  
  \begin{claim}\label{claim:alt}
    Let $k \in \{0,\dots,n\}$.
    \begin{enumerate}
    \item If $k$ is odd, then $\linfalt(S^{k+1})^\Gamma \cong 0$.
    \item If $k$ is even, then $\linfalt(S^{k+1})^\Gamma \cong \R$,
      generated by the function~$f_k$ constructed in the proof below.  
    \end{enumerate}
  \end{claim}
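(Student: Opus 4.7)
The plan is to exploit the transitivity of~$\Gamma$ on circularly ordered tuples and then to let the alternating property interact with cyclic permutations. Let $f \in \linfalt(S^{k+1})^\Gamma$. Since $f$ is alternating, $f$ vanishes on tuples with repeated entries, so it is determined by its values on tuples of pairwise distinct elements. Any such tuple differs from a circularly ordered tuple by a permutation, and by hypothesis~(1) (applied with $k+1 \leq n+1$) the group~$\Gamma$ acts transitively on circularly ordered $(k+1)$-tuples of~$S$. Combining transitivity with $\Gamma$-invariance shows that $f$ takes a single value~$c \in \R$ on every circularly ordered $(k+1)$-tuple.

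For part~(1), let $k$ be odd and fix a circularly ordered $(k+1)$-tuple~$(s_0, \dots, s_k)$ in~$S$; such a tuple exists because $|S| \geq n+1 \geq k+1$. The cyclic shift~$(s_1, \dots, s_k, s_0)$ is again circularly ordered, so by the previous paragraph
\[ f(s_0, \dots, s_k) = c = f(s_1, \dots, s_k, s_0).\]
On the other hand, the cyclic shift is a permutation of sign~$(-1)^k = -1$, so the alternating property gives~$f(s_1, \dots, s_k, s_0) = -f(s_0, \dots, s_k)$. Hence $c = -c$, so $c = 0$ and $f \equiv 0$.

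For part~(2), let $k$ be even. The cyclic shift now has sign~$(-1)^k = +1$, so the obstruction from part~(1) disappears. Define $f_k \colon S^{k+1} \to \R$ by $f_k(s_0, \dots, s_k) := 0$ if the entries are not pairwise distinct, and $f_k(s_0, \dots, s_k) := \sgn(\sigma)$ if $\sigma$ is a permutation of $\{0, \dots, k\}$ such that $(s_{\sigma(0)}, \dots, s_{\sigma(k)})$ is circularly ordered. The choice of such~$\sigma$ is unique up to cyclic permutation of its image, and since $k+1$ cyclic shifts have sign~$+1$ when $k$ is even, $\sgn(\sigma)$ is well-defined. The function~$f_k$ is alternating by construction, bounded since it takes values in~$\{-1,0,1\}$, and $\Gamma$-invariant because $\Gamma$ preserves the circular order on~$S^1$ (and hence maps circularly ordered tuples to circularly ordered tuples without altering the permutation~$\sigma$ needed to sort them). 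Conversely, any $f \in \linfalt(S^{k+1})^\Gamma$ is determined on circularly ordered $(k+1)$-tuples by the common value~$c$, and the alternating property and transitivity then force $f = c \cdot f_k$. Hence $\linfalt(S^{k+1})^\Gamma \cong \R$, generated by~$f_k$.

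The main subtlety is the well-definedness of~$f_k$ in part~(2): one must check that different circularly ordered permutations of the same tuple (which differ by a cyclic rotation) produce the same sign, and this is exactly the parity computation that makes part~(1) vanish and part~(2) one-dimensional.
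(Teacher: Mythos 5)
Your proof is correct and follows essentially the same route as the paper's: reduce to the common value on circularly ordered tuples via transitivity and the alternating property, kill that value by an odd cyclic shift when $k$ is odd, and construct the sign function $f_k$ (well-defined because a $(k+1)$-cycle is even when $k$ is even, and $\Gamma$-invariant because the action preserves the circular order) when $k$ is even. No gaps.
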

  \begin{proof}[Proof of Claim~\ref{claim:alt}]
    Let $f \in \linfalt(S^{k+1})$. We first show that $f$ is
    determined by the value on a single circularly ordered tuple:
    Indeed, $f$ vanishes on tuples with a repetition; all other tuples
    may be permuted to be circularly ordered. Moreover, since $\Gamma$
    acts transitively on the set of circularly ordered tuples, $f$ is
    constant on the set of all circularly ordered tuples. In
    particular, $\dim_\R \linfalt(S^{k+1})^\Gamma \leq 1$.

    As $|S| \geq n+1$, there exists a circularly ordered
    tuple~$(s_0, \dots, s_k) \in S^{k+1}$. 
    
    Let $k$ be odd. It suffices to show that $f(s_0, \dots, s_k)=0$.
    With $(s_0, \dots, s_k)$ also $(s_k, s_0, \dots, s_{k-1})$
    is circularly ordered. Because $k$ is odd, these two tuples differ
    by an odd permutation. As $f$ is both constant on all circulary
    ordered tuples and alternating, we obtain
    \[ f(s_0, \dots, s_k)
    = f(s_k, s_0, \dots, s_{k-1})
    = -f(s_0, \dots, s_k)
    \]
    and thus $f(s_0,\dots,s_k) =0$. Therefore,
    $\linfalt(S^{k+1})^\Gamma \cong 0$.

    Let $k$ be even. We define~$f_k \colon S^{k+1} \to \R$ as follows:
    On tuples with a repetition, we define~$f_k$ to vanish. If $(t_0,
    \dots, t_k) \in S^{k+1}$ has no repetition, we set
    \[ f_k(t_0, \dots, t_k) := \sgn(\sigma),
    \]
    where $\sigma$ is a permutation such that $(t_{\sigma(0)}, \dots,
    t_{\sigma(k)})$ is circularly ordered; this permutation~$\sigma$
    is only unique up to a $(k+1)$-cycle, but since $k$ is even,
    $\sgn(\sigma)$ is well-defined. Because $\Gamma$ acts
    orientation-preservingly and because there exists at least one
    circularly ordered $(k+1)$-tuple, this gives a well-defined non-trivial
    element in~$\linfalt(S^{k+1})^\Gamma$. Therefore,
    $\linfalt(S^{k+1})^\Gamma \cong \R$.
  \end{proof}

  In view of Claim~\ref{claim:alt}, the cochain
  complex~$\linfalt(S^{*+1})^\Gamma$ is (up to degree~$n$) isomorphic
  to the cochain complex
  \[ \R \to 0 \to \R \to 0 \to \dots  
  \]
  (whose coboundary operator is necessarily trivial) and  
  if $k \in \{0,\dots, n\}$ is even, then $[f_k]$ is non-trivial
  in~$\HH^k(\linfalt(S^{*+1})^\Gamma)$. 
  In particular, we obtain
  \[
  \HH_b^i(\Gamma;\R)
  \cong \HH^i\bigl(\linfalt(S^{*+1})^\Gamma \bigr)
  \cong
  \begin{cases}
    0 & \text{if $i$ is odd};\\
    \R & \text{if $i$ is even}
  \end{cases}
  \]
  for all~$i \in \{0,\dots, n-1\}$.
  
  As for degree~$n$, under our assumptions we cannot show that
  $\linfalt(S^{n+1})$ follows the same periodic pattern.  However we
  still have:
  
 \begin{claim}\label{claim:lastdifferential}
 The differential $\linfalt(S^{n+1})^{\Gamma} \to
 \linfalt(S^{n+2})^{\Gamma}$ is trivial.
 \end{claim}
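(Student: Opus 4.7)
The plan is to split on the parity of~$n$ and, in the only non-trivial case, check directly that the coboundary annihilates the explicit generator~$f_n$ of~$\linfalt(S^{n+1})^\Gamma$ produced in Claim~\ref{claim:alt}.

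First I would observe that if $n$ is odd then Claim~\ref{claim:alt} already gives~$\linfalt(S^{n+1})^\Gamma \cong 0$, so the differential has trivial domain and nothing remains to prove. The substance therefore lies in the even case, where Claim~\ref{claim:alt} identifies $\linfalt(S^{n+1})^\Gamma$ with the line~$\R \cdot f_n$, and it suffices to verify $\delta f_n = 0$ pointwise on~$S^{n+2}$.

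The key step is a short combinatorial computation. Since the simplicial coboundary preserves the alternating subcomplex (a routine check using adjacent transpositions), $\delta f_n$ is itself alternating, so it automatically vanishes on every $(n+2)$-tuple with a repeated entry; in particular this disposes of the case~$|S| < n+2$. For a circularly ordered tuple~$(t_0,\ldots,t_{n+1}) \in S^{n+2}$, removing any single entry preserves the cyclic order, so each face~$(t_0,\ldots,\hat t_i,\ldots,t_{n+1})$ is still circularly ordered and $f_n$ takes the value~$+1$ on it. The resulting alternating sum
$$\delta f_n(t_0,\ldots,t_{n+1}) = \sum_{i=0}^{n+1} (-1)^i$$
has $n+2$ terms, an even number because $n$ is even, and therefore vanishes. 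Any $(n+2)$-tuple of distinct entries is a permutation of a circularly ordered one, and both $\delta f_n$ and the zero function transform by the sign of the permutation, so $\delta f_n \equiv 0$ on all of~$S^{n+2}$.

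I do not expect a substantive obstacle here: the argument is entirely combinatorial and does not require any further transitivity or bounded acyclicity input beyond what was already used in Claim~\ref{claim:alt}. This is fortunate, since the hypotheses of Proposition~\ref{prop:circle} only supply transitivity of~$\Gamma$ on circularly ordered tuples of size up to~$n+1$, and nothing about the~$\Gamma$-action on~$S^{n+2}$. The only minor point requiring care is the standard verification that the coboundary descends to the alternating subcomplex.
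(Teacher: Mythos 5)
Your proof is correct and takes essentially the same route as the paper's: the odd case is dismissed because the domain vanishes, and for even $n$ one computes directly that $\delta f_n$ on a circularly ordered $(n+2)$-tuple is the alternating sum $\sum_{i=0}^{n+1}(-1)^i$ of the (constant) values on its faces, which vanishes since $n+2$ is even. The additional care you take in reducing arbitrary $(n+2)$-tuples to circularly ordered ones via alternation and sign transformation --- correctly noting that no transitivity on $S^{n+2}$ is available or needed --- is a detail the paper leaves implicit.
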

 
 \begin{proof}
 This is obvious if $n$ is odd, since then $\linfalt(S^{n+1})^{\Gamma}
 \cong 0$ by Claim~\ref{claim:alt}.
 
 Suppose instead that $n$ is even, and let $f \in
 \linfalt(S^{n+1})^{\Gamma}$ be a function that takes the constant
 value $\lambda$ on circularly ordered tuples.  Then, if $(s_0,
 \ldots, s_{n+1})$ is a circularly ordered tuple, we have
 $$\delta^{n+1}_b(f)(s_0, \ldots, s_{n+1}) = \sum\limits_{i = 0}^{n+1} (-1)^i \cdot \lambda = 0,$$
 since $n$ is even.
 \end{proof}
 
 Therefore
 \[
  \HH_b^n(\Gamma;\R) \cong
  \begin{cases}
    0 & \text{if $n$ is odd};\\
    \R & \text{if $n$ is even}
  \end{cases}
  \]
  as well.   
  It remains to deal with the bounded Euler class and its powers:

  \begin{claim}\label{claim:eu}
    The bounded Euler class~$\eub \Gamma \in
    \HH_b^2(\Gamma;\R)$ is non-trivial.
  \end{claim}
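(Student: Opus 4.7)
The plan is to identify the bounded Euler class (up to a non-zero scalar) with the generator~$[f_2]$ of~$\HH^2(\linfalt(S^{*+1})^\Gamma)$ exhibited in Claim~\ref{claim:alt}, by transporting it along the boundedly acyclic resolution~$\R \to \linf(S^{*+1})$ from Claim~\ref{claim:bac}.

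First, I would pick a point $x \in S$, which exists because $|S| \geq n+1 \geq 3$. Evaluation at $x$ defines a $\Gamma$-equivariant chain map of augmented $\Gamma$-resolutions
\[
  e_x \colon \linf(S^{*+1}) \to \linf(\Gamma^{*+1}),
  \qquad
  (e_x f)(g_0, \dots, g_k) := f(g_0 x, \dots, g_k x),
\]
lifting~$\id_\R$ and preserving alternating cochains. By the boundedly acyclic resolution principle~\cite[Proposition~2.5.4]{BAc} already invoked in~\eqref{eq:bcT}, passing to $\Gamma$-invariants and restricting to the alternating part yields a chain map whose induced map
\[
  e_x^* \colon \HH^2\bigl(\linfalt(S^{*+1})^\Gamma\bigr) \to \HH^2_b(\Gamma; \R)
\]
coincides with the canonical isomorphism from~\eqref{eq:bcT}.

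Next, I would observe that $e_x(f_2)$ is precisely the orientation cocycle
\[
  c_x(g_0,g_1,g_2) := f_2(g_0 x, g_1 x, g_2 x) \in \{-1,0,+1\}
\]
associated to the orientation-preserving action of~$\Gamma$ on~$S^1$ with basepoint~$x$. It is classical that $[c_x] \in \HH^2_b(\Gamma;\R)$ is a non-zero scalar multiple of the bounded real Euler class~$\eub{\Gamma}$; see, e.g.,~\cite{ghys,Frigerio}. Since $e_x^*$ is an isomorphism in degree~$2$ and $[f_2] \neq 0$ by Claim~\ref{claim:alt}, we conclude
\[
  [c_x] = e_x^*[f_2] \neq 0,
\]
and hence $\eub{\Gamma} \neq 0$, as required.

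The only mildly delicate step is matching~$c_x$ with a standard representative of the bounded Euler class, but this identification is entirely classical and can be directly cited, so no substantive new difficulty arises. Everything else is bookkeeping along the chain-level isomorphism between the two resolutions.
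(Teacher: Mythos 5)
Your proposal is correct and follows essentially the same route as the paper: the evaluation map $e_x$ is exactly the map $\varphi^*$ used there, and the identification of $e_x(f_2)$ with the orientation cocycle (a non-zero multiple of the bounded Euler class, namely $\eub\Gamma = \tfrac12\cdot[\orc^\Gamma]$) is precisely the paper's argument. No gaps.
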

  \begin{proof}[Proof of Claim~\ref{claim:eu}]
    We make the
    isomorphism in~\eqref{eq:bcT} more explicit. Let $x_0 \in S$.
    For~$k\in \N$, we consider the map
    \begin{align*}
      \varphi^k \colon \linf(S^{*+1})
      & \to \linf(\Gamma^{*+1})
      \\
      f
      & \mapsto
      \bigl((\gamma_0, \dots, \gamma_k)
      \mapsto f(\gamma_0x_0, \dots, \gamma_k x_0)\bigr).
    \end{align*}
    Then $\varphi^* \colon \linf(S^{*+1}) \to \linf(\Gamma^{*+1})$ is
    a degree-wise bounded $\Gamma$-cochain map that extends the
    identity on the resolved module~$\R$.  Because the
    resolution~$\linf(S^{*+1})$ is strong~\cite[Lemma~4.21]{Frigerio},
    $(\varphi^*)^\Gamma$ induces an isomorphism~$\HH_b^i(\Gamma;\R)
    \cong \HH^i(\linf(S^{*+1})^\Gamma)$ for all~$i \in \{0,\dots,
    n\}$~\cite[Proposition~2.5.4, Remark~2.5.5]{BAc}.

    As the inclusion~$i^* \colon \linfalt(S^{*+1}) \to \linf(S^{*+1})$
    is a $\Gamma$-cochain map that induces an
    isomorphism~$\HH^*(\linfalt(S^{*+1})^\Gamma) \cong
    \HH^*(\linf(S^{*+1})^\Gamma)$, we conclude that $\varphi_{\alt}^*
    := \varphi^* \circ i^*$ induces an isomorphism $\HH_b^i(\Gamma;\R)
    \cong \HH^i(\linfalt(S^{*+1})^\Gamma)$.

    By construction, $(\varphi_{\alt}^2)^\Gamma(f_2)$ gives the
    orientation cocycle~$\orc^\Gamma$ of the $\Gamma$-action. Because
    of $[f_2] \neq 0$, we know that the bounded Euler class 
    \[ \eub \Gamma
    = \frac12 \cdot [\orc^\Gamma]
    = \frac12 \cdot \HH^2\bigl((\varphi_{\alt}^*)^\Gamma\bigr)[f_2]
    \]
    is non-zero in~$\HH_b^2(\Gamma;\R)$. 
  \end{proof}

  Because of Claim~\ref{claim:eu} and $\HH_b^2(\Gamma;\R) \cong \R$,
  we conclude that $\HH_b^2(\Gamma;\R)$ is generated by the bounded
  Euler class.

  \begin{claim}\label{claim:eucup}
    For each~$k \in \{0,\dots, n/2\}$, the cup-power~$(\eub \Gamma{})^{\cup k}
    \in \HH_b^{2k}(\Gamma;\R)$ is non-trivial.
  \end{claim}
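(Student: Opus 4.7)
The plan is to carry out a cochain-level computation using the explicit model~$\linf(S^{*+1})^\Gamma$ and the alternating subcomplex~$\linfalt(S^{*+1})^\Gamma$ that we already used to identify the bounded cohomology in the relevant range. Recall from Claim~\ref{claim:eu} that the $\Gamma$-cochain map~$\varphi^*$ identifies the bounded Euler class~$\eub\Gamma$ with~$\frac12\cdot[\varphi^2(f_2)]$. Since the cup product in bounded cohomology is computed at the cochain level by the standard formula~$(f\cup g)(\gamma_0,\dots,\gamma_{p+q}) = f(\gamma_0,\dots,\gamma_p)\cdot g(\gamma_p,\dots,\gamma_{p+q})$ and $\varphi^*$ is multiplicative with respect to this formula, we obtain
\[
(\eub\Gamma)^{\cup k} = \frac{1}{2^k}\cdot \bigl[\varphi^{2k}\bigl(f_2^{\cup k}\bigr)\bigr] \in \HH_b^{2k}(\Gamma;\R).
\]
In view of Claim~\ref{claim:alt} and the fact that both $(\varphi^*)^\Gamma$ and the inclusion~$\linfalt(S^{*+1})^\Gamma\hookrightarrow\linf(S^{*+1})^\Gamma$ induce isomorphisms in cohomology up to degree~$n$, it suffices to show that the alternating projection~$\alt(f_2^{\cup k})\in\linfalt(S^{2k+1})^\Gamma$ is a \emph{non-zero} multiple of~$f_{2k}$.

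By Claim~\ref{claim:alt}, $\linfalt(S^{2k+1})^\Gamma \cong \R\cdot f_{2k}$, so write~$\alt(f_2^{\cup k}) = c_k\cdot f_{2k}$ and determine the constant~$c_k$ by evaluating at a single circularly ordered tuple~$(s_0,\dots,s_{2k})$. Unwinding the definition of~$f_2$ (which coincides with the orientation cocycle~$\orc$ on non-degenerate triples), one gets
\[
c_k = \frac{1}{(2k+1)!}\sum_{\sigma\in\mathfrak{S}_{2k+1}}\sgn(\sigma)\prod_{j=0}^{k-1}\orc\bigl(s_{\sigma(2j)}, s_{\sigma(2j+1)}, s_{\sigma(2j+2)}\bigr),
\]
where each factor is~$\pm1$. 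The problem is therefore reduced to showing that this signed sum is strictly positive.

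To establish positivity, I would argue by induction on~$k$. The base case~$k=1$ holds because~$c_1=1$, i.e., $f_2$ is already alternating. For the inductive step, rewrite the cup product as~$f_2^{\cup k} = f_2^{\cup(k-1)}\cup f_2$ and use the standard shuffle identity
\[
\alt(f\cup g) = \frac{1}{\binom{p+q+1}{p+1}}\sum_{\text{$(p+1,q)$-shuffles } \tau}\sgn(\tau)\cdot\tau^*\bigl(\alt(f)\cup\alt(g)\bigr)
\]
to reduce~$c_k$ to a sum over shuffles of the product of an oriented triple cocycle and the preceding alternating cocycle~$f_{2k-2}$. Evaluated on a fixed circularly ordered tuple, every shuffle contributes a term of the same sign: geometrically, each summand records an oriented decomposition of the $(2k+1)$-tuple on~$S^1$ into a $(2k-1)$-subtuple plus a distinguished triple, with signs cancelling consistently with the cyclic orientation. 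This forces~$c_k>0$, completing the induction.

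The main obstacle is this combinatorial positivity statement; the rest of the argument is formal bookkeeping via the isomorphisms already proven. An alternative, if one wishes to avoid the shuffle computation, is to pull back the class to any fixed finite subgroup of~$\Gamma$ realizing the circular order on~$(s_0,\dots,s_{2k})$ and directly compute~$c_k$ in the finite cyclic model, where~$\alt(f_2^{\cup k})$ is manifestly a positive rational multiple of~$f_{2k}$. Either route yields~$(\eub\Gamma)^{\cup k}\ne 0$, as desired.
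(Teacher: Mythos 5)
Your reduction is the same as the one in the paper: identify $(\eub{\Gamma})^{\cup k}$ with $\tfrac{1}{2^k}$ times the class of $f_2^{\cup k}$ in the resolution $\linf(S^{*+1})^\Gamma$, pass to the alternating subcomplex, and reduce everything to the statement that $\alt(f_2^{\cup k})$ is a \emph{non-zero} multiple of $f_{2k}$. The paper does exactly this and then proves in Appendix~\ref{appx:eupowers} that $\alt(f_2^{\cup k}) = \frac{2^k \cdot k!}{(2k)!} \cdot f_{2k}$, by induction via the identity $\alt(f_2 \cup f_{2(k-1)}) = \frac{1}{2k-1} \cdot f_{2k}$. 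Up to the point where you write $\alt(f_2^{\cup k}) = c_k \cdot f_{2k}$ and express $c_k$ as a signed sum, your argument agrees with the paper and is fine.

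The gap is in the positivity of $c_k$, which is the entire content of the claim. Your assertion that ``every shuffle contributes a term of the same sign'' is not correct, and the heuristic offered for it conflicts with what actually happens. In the explicit evaluation of $\alt(f_2 \cup f_{2(k-1)})$ on a circularly ordered tuple $(s_0,\dots,s_{2k})$, after fixing the shared vertex of the cup product at position $2k$, the surviving sum is indexed by the pair $(i,j)$ of positions fed into the $f_2$-factor, and the sign of the $(i,j)$-term is $\sgn(j-1)\cdot\sgn(i)$ (parity signs). For each odd $i$ the inner sum over $j$ vanishes by cancellation; only the terms with $i$ even survive, each contributing $+1$, and the count of those gives $\frac{1}{2k-1}$. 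So there is genuine cancellation among the summands, and positivity is the outcome of a careful count rather than of a common sign. Two further problems: the shuffle identity you quote is not the right formula for the \emph{simplicial} cup product, because $(f\cup g)(x_0,\dots,x_{p+q})$ shares the middle vertex $x_p$ between the two factors, so the $2k+1$ arguments of $f_2^{\cup k}$ are not partitioned into disjoint blocks (this is precisely why the shared position has to be handled by hand); and the ``finite cyclic model'' alternative either computes nothing new (the constant $c_k$ already depends only on the cyclic order of $2k+1$ points) or, if intended as restriction to a finite subgroup, cannot detect anything, since finite groups are amenable and hence boundedly acyclic. As it stands, the key combinatorial fact $c_k \neq 0$ is asserted, not proved.
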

  \begin{proof}[Proof of Claim~\ref{claim:eucup}]
    In view of the relation between $\eub \Gamma$ and $[f_2]$
    (proof of claim~\ref{claim:eu}) and the above description
    of~$\HH_b^{2k}(\Gamma;\R)$, 
    it suffices to show that $\alt(f_2{}^{\cup k})$ is non-trivial,
    where $\args\cup\args$ denotes the standard cup-product
    on the cochain level (notice that even if $f_2$ is alternating,
    the non-trivial cup-product $f_2{}^{\cup k}$ is not so). Indeed, we have
    \[ \alt(f_2{}^{\cup k}) = \frac{2^k \cdot k!}{(2k)!} \cdot f_{2k}
    \]
    (Appendix~\ref{appx:eupowers}), which is non-trivial
    (Claim~\ref{claim:alt}).
  \end{proof}

  This completes the proof of Proposition~\ref{prop:circle}.
\end{proof}  

\begin{rem}
The second hypothesis in Proposition~\ref{prop:circle} was used to
show that the modules~$\ell^\infty(S^{k+1})$ are $n$-boundedly
acyclic, which in turn is used to apply the computation of bounded
cohomology through acyclic resolutions~\cite[Proposition 2.5.4]{BAc}.
Note however that this result does not require $n$-bounded acyclicity
of all stabilizers.  Indeed, it is enough to ask that the stabilizer
of a circularly ordered $k$-tuple is $(n - k + 1)$ boundedly acyclic,
for~$k \in \{ 1, \ldots, n \}$. To keep notation simple, we chose to
state Proposition~\ref{prop:circle} with the stronger hypothesis.
\end{rem}
  
We apply this to $\Gamma = T$, to show that
if $F$ is $n$-boundedly acyclic, then Question~\ref{qT}
has a positive answer up to degree~$n$:

\begin{cor}\label{cor:HbT}
  If $F$ is boundedly acyclic, then
  $\HH^*_b(T;\R)$ (with the cup-product structure) is isomorphic
  to the polynomial ring~$\R[x]$ with~$|x| =2$,
  and the bounded
  Euler class of~$T$ is a polynomial generator of~$\HH^*_b(T;\R)$.
\end{cor}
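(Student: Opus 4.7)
The plan is to derive Corollary~\ref{cor:HbT} as a direct application of Proposition~\ref{prop:circle} to the canonical action of~$T$ on the circle, restricted to the orbit~$S := \Z[1/2]/\Z$ of dyadic rationals. To do this, I would fix an integer~$n \geq 2$ and verify both hypotheses of the proposition for this action.

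First, I would check the transitivity hypothesis. It is a classical feature of Thompson's group~$T$ that, for every~$k \geq 1$, the action of~$T$ on the set of circularly ordered $k$-tuples of dyadic rationals is transitive. This follows from the fact that one can construct, for any two such circularly ordered tuples, a piecewise linear orientation-preserving homeomorphism with dyadic breakpoints and slopes in~$\{2^{\mathbb{Z}}\}$ that sends one tuple to the other, by concatenating appropriate affine maps on the arcs between the prescribed points. This also handles the case~$k \leq n+1$ needed in the hypothesis, and in particular $|S| = \infty \geq n+1$.

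Next, I would identify the stabilizers. Given a circularly ordered $k$-tuple $(s_1, \dots, s_k) \in S^k$, cutting the circle at these dyadic points gives $k$ arcs, each of which is a closed interval with dyadic endpoints. The subgroup of~$T$ fixing this tuple pointwise preserves each arc and, after affine identification of each arc with~$[0,1]$ using dyadic rescaling, the action on each arc is precisely that of the Thompson group~$F$. Hence the stabilizer is canonically isomorphic to~$F^k$. Since $F$ is assumed to be boundedly acyclic, repeated application of Theorem~\ref{thm:ext} to the extensions~$1 \to F \to F^k \to F^{k-1} \to 1$ shows that each~$F^k$ is boundedly acyclic, and in particular $n$-boundedly acyclic for all~$k \in \{1,\dots,n\}$.

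With both hypotheses in place, Proposition~\ref{prop:circle} yields
\[
  \HH^i_b(T;\R) \cong \begin{cases} \R & \text{if } i \text{ is even} \\ 0 & \text{if } i \text{ is odd} \end{cases}
\]
for all~$i \in \{1,\dots,n\}$, together with the information that $\HH^2_b(T;\R)$ is spanned by the bounded Euler class~$\eub{T}$ and that each cup-power $(\eub{T})^{\cup k}$ is non-zero in~$\HH^{2k}_b(T;\R)$ for~$2k \leq n$, by Claim~\ref{claim:eucup} in the proof of that proposition. Letting~$n \to \infty$, since in each even degree~$2k$ the bounded cohomology is one-dimensional and the non-trivial class~$(\eub{T})^{\cup k}$ lives there, the cup-powers of~$\eub{T}$ form a basis of~$\HH^*_b(T;\R)$ as an~$\R$-vector space. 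The ring structure is then forced: the graded $\R$-algebra~$\HH^*_b(T;\R)$ is generated by~$\eub{T}$ in degree~$2$ with no relations, giving the isomorphism with~$\R[x]$, $|x|=2$. The main technical point is the identification of the stabilizers with~$F^k$, and this is where the boundedly acyclic hypothesis on~$F$ is used crucially; the rest of the argument is essentially bookkeeping for the general criterion in Proposition~\ref{prop:circle}.
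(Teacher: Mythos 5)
Your proposal is correct and follows essentially the same route as the paper: both apply Proposition~\ref{prop:circle} to the $T$-action on the orbit $\Z[1/2]/\Z$, using transitivity on circularly ordered tuples, the identification of stabilizers with direct powers of~$F$, and Theorem~\ref{thm:ext} to get their bounded acyclicity. The only difference is that you spell out the stabilizer identification and the passage to the limit over~$n$ in more detail than the paper does.
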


\begin{proof}
  For each~$k \geq 1$, the group~$T$ acts transitively on the set of
  circularly ordered $k$-tuples in~$\Z[1/2]/\Z$; the stabilizers of
  this action are isomorphic to direct powers of~$F$~\cite{thompson}.
  In particular, the stabilizers are boundedly acyclic
  by Theorem~\ref{thm:ext}. 
  Therefore, Proposition~\ref{prop:circle} is applicable and we obtain
  that $\HH_b^*(T;\R)$ is isomorphic as a graded $\R$-algebra 
  to~$\R[x]$ with $x$ corresponding to the bounded Euler class~$\eub T$.

  Alternatively, in this case, the non-triviality of the powers of the
  bounded Euler class is already known through the computations of
  Ghys--Sergiescu and Burger--Monod~\cite{rigidity}.
\end{proof}

Corollary~\ref{cor:HbT} also holds in a range up to~$n$ (with the same
proof).

\begin{cor}\label{cor:HbTr}
  If $F$ is boundedly acyclic and $r \in \N_{\geq 1}$, then
  \begin{align*}
    \R[x_1,\dots,x_r] & \to \HH_b^*(T^{\times r};\R) \\
    x_j & \mapsto \HH_b^2(\pi_j;\R) (\eub T)
  \end{align*}
  defines an isomorphism of graded $\R$-algebras; here,
  $\HH_b^*(T^{\times r};\R)$ carries the cup-product structure, $|x_j|
  =2 $, and $\pi_j \colon T^{\times r} \to T$ denotes the projection
  onto the $j$-th factor for each~$j \in \{1,\dots, r\}$.  Moreover,
  the canonical semi-norm on~$\HH_b^*(T^{\times r};\R)$ then is a
  norm.
\end{cor}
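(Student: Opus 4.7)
The plan is to mirror the proof of Corollary~\ref{cor:HbT} by exploiting the product action of $T^{\times r}$ on $S^r$, where $S = \Z[1/2]/\Z$. For every~$k \in \N$, the orbits of $T^{\times r}$ on $(S^r)^{k+1} \cong (S^{k+1})^r$ are parametrized by $r$-tuples of $T$-orbit types on $S^{k+1}$, so there are only finitely many of them. The stabilizer of any point in $(S^r)^{k+1}$ is an $r$-fold product of $T$-stabilizers of tuples in~$S^{k+1}$; each such factor is a finite product of copies of~$F$, and hence, assuming $F$ is boundedly acyclic, boundedly acyclic by iterated use of Theorem~\ref{thm:ext}. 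Consequently, Lemma~\ref{lem:bacaction} shows that $\linf((S^r)^{k+1})$ is a boundedly acyclic $T^{\times r}$-module for every~$k$, so the strong $T^{\times r}$-resolution~$\R \to \linf((S^r)^{*+1})$ computes bounded cohomology:
\[ \HH_b^*(T^{\times r};\R) \cong \HH^*\bigl(\linf((S^r)^{*+1})^{T^{\times r}}\bigr). \]

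The remaining task is to identify this invariant cochain complex via a tensor-product model. I would use an Eilenberg--Zilber-type quasi-isomorphism to replace the diagonal resolution~$\linf((S^r)^{*+1})$ by the total complex of the $r$-fold product resolution~$\bigotimes_{j=1}^r \linf(S^{*+1})$, on which $T^{\times r}$ acts factor-wise. Taking invariants commutes with this tensor product, giving the total complex of~$\bigotimes_{j=1}^r \linf(S^{*+1})^T$. By the computation in the proof of Proposition~\ref{prop:circle}, each factor~$\linf(S^{*+1})^T$ is cohomologically concentrated in even degrees with value~$\R$, and is generated in degree~$2$ by the alternating orientation cocycle representing~$\eub T$. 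The Künneth theorem for such complexes then yields the graded algebra isomorphism~$\HH_b^*(T^{\times r};\R) \cong \R[x_1,\dots,x_r]$ with $|x_j|=2$, and the generator~$x_j$ corresponds by construction to the pullback~$\HH_b^2(\pi_j;\R)(\eub T)$.

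The principal obstacle is making the product-resolution step rigorous in the bounded setting: verifying that $\bigotimes_{j=1}^r \linf(S^{*+1})$ with the total differential is a strong, boundedly acyclic $T^{\times r}$-resolution of~$\R$ whose cohomology of invariants computes~$\HH_b^*(T^{\times r};\R)$. This should go through by an iterated Eckmann--Shapiro-type argument in each factor, combined with the standard bounded-cohomology resolution machinery~\cite[Proposition~2.5.4]{BAc}. Finally, every monomial class~$x_1^{a_1}\cdots x_r^{a_r}$ is represented in the tensor-product model by the product of the alternating orientation cocycles~$f_{2a_j}$, each of $\ell^\infty$-norm~$1$, so every non-zero homogeneous class has positive canonical semi-norm. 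Since each graded piece of~$\HH_b^*(T^{\times r};\R)$ is finite-dimensional, this shows that the canonical semi-norm is a norm.
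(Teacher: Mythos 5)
Your first step is sound: the diagonal $T^{\times r}$-action on $(S^r)^{*+1}$ does have finitely many orbits with stabilizers that are finite products of copies of~$F$, so Lemma~\ref{lem:bacaction} and the boundedly acyclic resolution machinery give $\HH_b^*(T^{\times r};\R) \cong \HH^*\bigl(\linf((S^r)^{*+1})^{T^{\times r}}\bigr)$. The genuine gap is the step you yourself flag as ``the principal obstacle'': there is no Eilenberg--Zilber/K\"unneth theorem available in bounded cohomology that would let you replace the diagonal resolution by the total complex of $\bigotimes_{j}\linf(S^{*+1})$. The dual of the Alexander--Whitney map (the cross product) does map the tensor product into $\linf((S^r)^{*+1})$ and yields injectivity statements, but the inverse direction --- the dual of the shuffle map --- does not land in the algebraic (or projective) tensor product of the $\linf$-complexes: $\linf(X^{p+1}\times Y^{q+1})$ is strictly larger than any reasonable tensor product of $\linf(X^{p+1})$ and $\linf(Y^{q+1})$. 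This is precisely the failure that the paper circumvents: it proves injectivity of $\Phi_b^r$ by pairing with ordinary homology classes (via the K\"unneth and Universal Coefficient Theorems in \emph{ordinary} (co)homology), and obtains the matching upper bound on $\dim_\R \HH_b^k(T^{\times r};\R)$ by induction on~$r$ using the Hochschild--Serre spectral sequence in bounded cohomology, which is applicable because the canonical semi-norm on $\HH_b^*(T;\R)$ is shown to be a norm in the base case. Without a substitute for the missing K\"unneth isomorphism, your surjectivity argument does not go through.

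Your argument for the final assertion is also reversed. Exhibiting a representative cocycle of $\ell^\infty$-norm~$1$ for a monomial class gives an \emph{upper} bound on its canonical semi-norm, not the lower bound needed to show it is non-zero; and even if each monomial had positive semi-norm, a non-trivial linear combination of monomials could a priori lie in the null space of the semi-norm (which is a linear subspace), so positivity on a basis does not suffice. The paper instead produces, for every non-zero polynomial~$p$, a homology class $\alpha_p$ with $\langle \Phi_b^r(p),\alpha_p\rangle = 1$, which forces $\|\Phi_b^r(p)\|_\infty \geq \|\alpha_p\|_1^{-1} > 0$ for \emph{arbitrary} non-zero~$p$; combined with surjectivity this shows the semi-norm is a norm.
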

\begin{proof}
  We combine Corollary~\ref{cor:HbT} with suitable K\"unneth
  arguments. As usual in bounded cohomology, some care is
  necessary to execute this.

  We first show that the polynomial ring embeds into~$\HH_b^*(T;\R)$: 
  The $\R$-algebra homomorphism~$\R[x] \to \HH^*(T;\R)$
  given by
  \[ x \mapsto \eu^T
  \]
  is injective~\cite{cohoT}. 
  Therefore, the K\"unneth Theorem shows that
  \[ x_j \mapsto \HH^2(\pi_j;\R)(\eu^T)
  \]
  yields an injective $\R$-algebra homomorphism~$\Phi^r \colon \R[x_1,\dots,x_r]
  \to \HH^*(T^{\times r};\R)$. In combination with the
  Universal Coefficient Theorem, we obtain: For every polynomial~$p \in
  \R[x_1,\dots,x_r] \setminus \{0\}$, there exists a class~$\alpha_p
  \in \HH_*(T^{\times r};\R)$ with
  \[ \bigl\langle \Phi^r(p), \alpha_p \rangle = 1.
  \]
  These non-trivial evaluations show that also the bounded version
  \begin{align*}
    \Phi_b^r \colon \R[x_1,\dots,x_r]
    & \to \HH_b^*(T^{\times r};\R)
    \\
    x_j & \mapsto \HH_b^2(\pi_j;\R)(\eub T)
  \end{align*}
  is injective; even more, for each~$p \in \R[x_1,\dots,x_r]\setminus \{0\}$,
  we have
  \[ \bigl\langle \Phi_b^r(p) , \alpha_p \bigr\rangle
  = \bigl\langle \Phi^r(p), \alpha_p \bigr\rangle
  = 1
  \]
  and thus $\|\Phi_b^r(p)\|_\infty \neq 0$. So far, we did not use
  the postulated bounded acyclicity of~$F$.

  It remains to show that $\Phi_b^r$ is surjective. To this end, it
  suffices to inductively (on~$r$) establish that $\dim_\R
  \HH_b^k(T^{\times r};\R) \leq \dim_\R (\R[x_1,\dots,x_r])_k$ holds
  for all~$k \in \N$.

  The base case is handled in Corollary~\ref{cor:HbT}; moreover, the
  evaluation argument above shows that the canonical semi-norm
  on~$\HH_b^*(T;\R)$ indeed is a norm.

  For the induction step, let us assume that the claim holds for~$r-1$. 
  We recall that for group extensions~$1 \to N \to \Gamma \to Q \to 1$
  there is a Hochschild--Serre Spectral Sequence 
  \[ E_2^{pq} = \HH_b^p\bigl(Q; \HH_b^q(N;\R) \bigr)
     \Longrightarrow \HH_b^{p+q}(\Gamma;\R)
  \]
  in bounded cohomology, whenever the canonical semi-norm
  on~$\HH_b^*(N;\R)$ is a norm~\cite[Proposition~12.2.1]{monod}. 
  Applying this spectral sequence to the 
  trivial product extension
  \[ 1 \to T \to T^{\times r} \to T^{\times (r-1)} \to 1
  \]
  shows that the degree-wise dimensions of~$\HH_b^*(T^{\times r};\R)$
  are at most the degree-wise dimensions of~$\R[x_1,\dots,x_r]$
  (with~$|x_i| = 2$ for all~$i\in \{1,\dots,r\}$).  Hence, $\Phi_b^r$
  is surjective. In particular, again by the evaluation argument
  above, the canonical semi-norm on all of~$\HH_b^*(T^{\times r};\R)$ is a
  norm.
\end{proof}

\begin{rem}
  Analogous results are obtained by Monod and
  Nariman~\cite{monodnariman}, who computed the full bounded
  cohomology of the groups of orientation-preserving homeomorphisms of
  the circle and the $2$-disc.  In fact, in Proposition
  \ref{prop:circle} one can replace orbits by \emph{fat orbits} (that
  is, orbits of \emph{fat points}~\cite{monodnariman}).  This allows
  to compute the full bounded cohomology of~$\Homeo^+(S^1)$ from the
  bounded acyclicity of $\Homeo_c(\R)$.  Moreover, again using fat
  orbits, one can deduce from Proposition~\ref{prop:OF} a positive
  answer to Question~\ref{qT} for a natural countably singular
  analogue of Thompson's group $T$.
\end{rem}

\subsection{On the bounded cohomology of~$V$}

We recall the definition of~$V$:

\begin{defi}
The Thompson group~$V$ is the group of piecewise linear right-continuous
bijections~$f$ of the circle~$\R/\Z$ with the following properties:
\begin{enumerate}
\item $f$ has finitely many breakpoints, all of which lie in~$\Z[1/2]/\Z$;
\item Away from the breakpoints, $f$ is orientation-preserving and the
  slope of~$f$ is a power of~$2$;
\item $f$ preserves~$\Z[1/2]/\Z$.
\end{enumerate}
\end{defi}

It was recently proved that $V$ is acyclic~\cite{cohoV}: This had been
conjectured by Brown~\cite{cohoV2}, who already proved that $V$ is
rationally acyclic. Moreover, $V$ is uniformly perfect~\cite{thompson};
so, using the same argument as in Remark~\ref{rem:2BAc}, we deduce
that $V$ is $2$-boundedly acyclic. 

While the proof of acyclicity of $V$ is involved, the proof of
rational acyclicity is much simpler and only relies on standard
arguments in equivariant homology. A bounded analogue of equivariant
homology theory has recently been developed~\cite{kevin},
but it does not seem possible to directly translate Brown's proof 
to bounded cohomology.

A positive answer to Question~\ref{qV} would make $V$ the first
example of a non-amenable boundedly acyclic group of type~$F_\infty$.
Moreover, it would make $V$ the first tractable example of
a non-amenable boundedly acyclic finitely presented group: The only
known example~\cite{our} is very implicit.

\appendix
\section{Pseudo-mitotic groups are boundedly acyclic}\label{appendix:proof:pseudo}

We prove Theorem~\ref{thm:binatebac}. The proof is an adaption of the
original proofs for ordinary cohomology~\cite{Berrick,Varadarajan} to
the setting of bounded cohomology. In bounded cohomology, we keep
additional control on primitives, similarly to Matsumoto and Morita
for certain homeomorphism groups~\cite{MM} and similarly to the case
of mitotic groups~\cite{Loeh}.

The main ingredient in the proof is the following proposition which is
an adaptation for the pseudo-mitotic setting
of the mitotic case~\cite[Proposition~4.6]{Loeh}:

\begin{prop}\label{prop:ubc:homo}
Let $n \in \N$ and let $\kappa \in \R_{> 0}$. Let 
$$
H \xrightarrow{\varphi} H' \xrightarrow{\varphi'} K \xrightarrow{\psi} \Gamma \xrightarrow{i} P
$$
be a chain of group homomorphisms such that 
\begin{enumerate}
\item The homomorphism $i \colon \Gamma \to P$ is a pseudomitosis of
  $\Gamma$ in $P$;

\item For every $s \in \{1, \dots, n-1\}$ we have $\HH_s(\varphi') = 0$;

\item For every $s \in \{0, \dots, n-1\}$ the homomorphisms $\varphi
  $ and $\psi$ satisfy $(s, \kappa)$-$\ubc$.
\end{enumerate}
Then, for all $s \in \{1, \dots, n\}$ we have
$$
\HH_s(i \circ \psi \circ \varphi' \circ \varphi) = 0.
$$ 
Moreover, there exists a constant $c_{n, \kappa} \in \R_{>0}$ (depending
only on~$n$ and~$\kappa$) such
that the composition~$i \circ \psi \circ \varphi' \circ \varphi$
satisfies $(s, c_{n, \kappa})$-$\ubc$ for every~$s \in \{0, \cdots,
n\}$ .
\end{prop}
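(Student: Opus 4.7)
The approach is to adapt L\"oh's argument in the mitotic case~\cite[Proposition~4.6]{Loeh} to the pseudo-mitotic setting, replacing the mitotic witnesses~$s,d$ by the homomorphisms~$\psi_0, \psi_1 \colon \Gamma \to P$ and the conjugating element~$g \in P$ supplied by Definition~\ref{defi:pseudo:mitosis} applied to the pseudo-mitosis~$i$. I would proceed by induction on~$n$, arranging the construction so that the lower-degree UBC of~$\varphi$ and~$\psi$, together with the vanishing~$\HH_s(\varphi') = 0$ for $s < n$, feeds into producing bounded $\ell^1$-primitives in degree~$n$.

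The key chain-level mechanism comes from the two pseudo-mitosis identities. Given a cycle~$w \in \CC_n(\Gamma;\R)$, the conjugation relation~$\psi_1(h) = g^{-1}\psi_0(h)g$ furnishes the standard bar-resolution chain homotopy~$T$ with $(\psi_0)_\# - (\psi_1)_\# = \partial T + T\partial$ and explicit $\ell^1$-norm bound depending only on~$n$; while the product relation~$\psi_0(h) = h \cdot \psi_1(h)$, combined with the commutation~$[i(\Gamma),\psi_1(\Gamma)] = 1$, yields via an Eilenberg--Zilber-type shuffle construction a homotopy~$S$ with $(\psi_0)_\# - i_\# - (\psi_1)_\# = \partial S + S\partial$. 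Subtracting gives $i_\# = \partial(T - S) + (T - S)\partial$ as maps $\CC_*(\Gamma;\R) \to \CC_{*+1}(P;\R)$. Thus, for a boundary~$z \in \partial_{n+1}\bigl(\CC_{n+1}(H;\R)\bigr)$, the chain~$w := (\psi \circ \varphi' \circ \varphi)_\#(z)$ is a cycle in~$\CC_n(\Gamma;\R)$, and $(T-S)(w) \in \CC_{n+1}(P;\R)$ is a candidate filling of~$(i \circ \psi \circ \varphi' \circ \varphi)_\#(z)$. Together with the easy chaining argument that produces UBC for the composition in degrees~$\leq n-1$ directly from hypothesis~(3), this would yield $(s, c_{n,\kappa})$-UBC in the entire range $s \in \{0,\dots,n\}$, which in turn gives $\HH_s(i \circ \psi \circ \varphi' \circ \varphi) = 0$ for $s \in \{1,\dots,n\}$.

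The main obstacle is controlling the $\ell^1$-norm of this filling by a constant depending only on~$n$ and~$\kappa$ times~$\|z\|_1$. The conjugation homotopy~$T$ is manifestly bounded, but the shuffle homotopy~$S$ cannot simply be taken off the shelf in a norm-controlled way at the level needed: in the bounded setting it has to be built up degree by degree, and each inductive step produces intermediate boundaries in~$H'$ and~$K$ that must themselves be filled with controlled $\ell^1$-norms. This is precisely where hypotheses~(2) and~(3) enter: the UBC of~$\varphi$ and~$\psi$ in degrees below~$n$ supplies bounded primitives for these intermediate cycles once they have been transported into~$H'$ or~$\Gamma$, while the homology vanishing $\HH_s(\varphi') = 0$ for $s \in \{1,\dots,n-1\}$ ensures that the relevant cycles become boundaries after pushing through~$\varphi'$, which is what allows the UBC of~$\psi$ to be invoked at the next step. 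A careful bookkeeping of all these norms across the induction then produces the final constant~$c_{n,\kappa}$.
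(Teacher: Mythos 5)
Your overall route is the one the paper takes: set $f := \psi\circ\varphi'\circ\varphi$, combine the explicit conjugation homotopy coming from $\psi_0 = \gamma_g\circ\psi_1$ (the paper's $\Theta$, with $\|\Theta_n\|\leq n+1$) with a controlled decomposition of the diagonal coming from $\psi_0(h)=h\cdot\psi_1(h)$ and the commutation of $i(\Gamma)$ with $\psi_1(\Gamma)$, and end with
$(i\circ f)_*(z)=\partial_{n+1}\bigl(\Theta\circ(\psi_1\circ f)_*(z)-\mu_*\circ E'(z)\bigr)$,
where $\mu(a,b)=a\cdot\psi_1(b)$. Your reduction in degrees $\leq n-1$ via hypothesis~(3) is also fine.

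However, the displayed identity $i_\#=\partial(T-S)+(T-S)\partial$ as maps $\CC_*(\Gamma;\R)\to\CC_{*+1}(P;\R)$ is false: no operator $S$ with $(\psi_0)_\#-i_\#-(\psi_1)_\#=\partial S+S\partial$ on all of $\CC_*(\Gamma;\R)$ can exist. If it did, then together with $T$ it would force $\HH_s(i)=0$ for every $s\geq 1$ for \emph{any} pseudo-mitosis $i\colon\Gamma\to P$ in a single step, rendering hypothesis~(2) and the entire four-term chain superfluous. This is precisely what fails: on homology, $\HH_s(\psi_0)=\HH_s(\mu)\circ\HH_s(\Delta_\Gamma)$, and by the K\"unneth theorem $\HH_s(\Delta_\Gamma)$ has components in all cross-summands $\HH_p(\Gamma)\otimes\HH_q(\Gamma)$ with $p+q=s$ and $p,q\geq 1$; hence $\HH_s(\psi_0)-\HH_s(i)-\HH_s(\psi_1)$ equals the sum of these cross terms, which need not vanish for $s\geq 2$. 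The correct statement --- the paper's identity~\eqref{eq:f:times:f:E} defining the operator $E'$ --- is available only after precomposing with $f_*$ and restricting to boundaries $z\in\partial_{n+1}\CC_{n+1}(H;\R)$: one needs $\HH_s(\varphi')=0$ for $1\leq s\leq n-1$ to kill the cross terms in homology, and then the $(s,\kappa)$-$\ubc$ of $\varphi$ and $\psi$ in degrees $\leq n-1$ to fill the resulting nullhomologous intermediate cycles by chains of controlled $\ell^1$-norm, so that $\|E'\|$ is bounded in terms of $n$ and $\kappa$ alone. You correctly identify in your last paragraph that this is where hypotheses~(2) and~(3) must enter, but the construction is not carried out, and as written your argument rests on a chain homotopy that does not exist; the controlled construction of $E'$ (carried out in the mitotic case by L\"oh and adapted verbatim in the paper) is the actual content of the proof.
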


We give the proof of Proposition~\ref{prop:ubc:homo} in
Section~\ref{sec:proof:homo:ubc}.  Following the mitotic
case~\cite{Loeh}, we show first how to deduce
Theorem~\ref{thm:binatebac} from this result.

\begin{proof}[Proof of Theorem~\ref{thm:binatebac}]
Let $P$ be a pseudo-mitotic group. According to
Theorems~\ref{thm:MM:ubc} and ~\ref{thm:pseudo:mitotic:are:acyclic},
it is sufficient to show that for every~$n
\in \N_{>0}$ the group~$P$ satisfies~$n$-$\ubc$.

Let $n \in \N_{>0}$ and let $z \in \CC_n(P; \R)$ be a boundary, i.e.,
there exists a chain~$c \in \CC_{n+1}(P; \R)$ such
that~$\partial_{n+1} c = z$. Since both $z$ and $c$ involve only
finitely many elements of~$P$, there exists a finitely generated
subgroup~$\Gamma_0$ of~$P$ such that
$
z \in \CC_{n}(\Gamma_0; \R)$
and
$c \in \CC_{n+1}(\Gamma_0; \R).
$
We show that the inclusion~$\Gamma_0 \hookrightarrow P$
satisfies $(n, \kappa_n)$-$\ubc$, where $\kappa_n \in \R_{>0}$
only depends on~$n$. This condition readily implies that $P$
satisfies $n$-$\ubc$, whence the thesis.

Since $P$ is pseudo-mitotic, $\Gamma_0$ has a mitotis into~$P$. Let
$\psi_0, \psi_1 \colon \Gamma_0 \to P$ and $g \in P$ be witnesses of
such a pseudo-mitosis. Then, $\Gamma_0$ also admits a pseudo-mitosis
into the following finitely generated subgroup of~$P$:
$$
\Gamma_1 = \langle \Gamma_0, \psi_0(\Gamma_0), \psi_1(\Gamma_0), g\rangle \subset P.
$$
By iterating this construction we get a sequence~$\Gamma_0 \leq
\Gamma_1 \leq \cdots \leq P$ of finitely generated groups such that at
each step the inclusion~$\Gamma_{j} \hookrightarrow \Gamma_{j+1}$ is a
pseudo-mitosis.

Following \emph{verbatim} the proof of the mitotic
case~\cite[Theorem~1.2]{Loeh}, by induction on~$n \geq 1$ and using
Proposition~\ref{prop:ubc:homo}, one can show that the inclusion
of~$\Gamma$ into a sufficiently large $\Gamma_{j_n}$ satisfies $(n,
\kappa_n)$-$\ubc$, where $\kappa_n$ only depends on~$n$.

Using the fact that $\Gamma_{j_n} \leq P$, this implies that there
exists $c' \in \CC_{n+1}(P; \R)$ with
$$
\partial_{n+1} c' = z \quand |c'|_1 \leq \kappa_n \cdot |z|_1.
$$
This shows that $P$ satisfies $n$-$\ubc$ for all positive degrees~$n$; 
whence, $P$ is boundedly acyclic (Theorem~\ref{thm:MM:ubc}).
\end{proof}

\subsection{Proof of Proposition~\ref{prop:ubc:homo}}\label{sec:proof:homo:ubc}

This section is devoted to the proof of
Proposition~\ref{prop:ubc:homo}.  The proof is based on a refinement
of Varadarajan's proof~\cite[Proposition~1.4]{Varadarajan},
additionally taking the norm of the morphisms involved into account.
Our approach will closely follow the mitotic
case~\cite[Appendix~A]{Loeh}.

\begin{proof}[Proof of Proposition~\ref{prop:ubc:homo}]
We prove the statement in degree $n \in \N$. For convenience we write 
$$
f \coloneqq \psi \circ \varphi' \circ \varphi.
$$
Since $\HH_s(\varphi') = 0$ for every $s \in \{1, \dots, n-1\}$,
also $\HH_s(f) = 0$ in the same degrees.
The fact that
$\HH_s(i \circ f) = 0$ 
for every~$s \in \, \{1, \dots, n\}$ was already proved by
Varadarajan~\cite[Proposition~1.4]{Varadarajan}. 
In order to
adapt the mitosis proof~\cite[Appendix~A]{Loeh} to the case 
of pseudo-mitoses, it is convenient to recall Varadarajan's argument.

Let $\psi_0, \psi_1 \colon \Gamma \to P$ and $g \in \, P$ be witnesses
of the pseudo-mitosis $i \colon \Gamma \to P$.  Then, we define the map
\begin{align*}
\mu \colon \Gamma \times \Gamma &\to P \\
(g', g) &\mapsto g' \cdot \psi_1(g) .
\end{align*}
Notice that $\mu$ is a group homomorphism by Condition~(2) of the
definition of pseudo-mitosis~\cite[proof of Proposition~1.4]{Varadarajan}.

Let $j_1 \colon H \to H \times H$ and $j_2 \colon H \to H \times H$ be
the inclusions into the first and the second factor, respectively.
Let $\gamma_g$ denote the conjugation with respect to $g$, i.e.,
$\gamma_g(g') = g g' g^{-1}$ for every $g' \in P$. Moreover, let
$\Delta_H \colon H \to H \times H$ denote the diagonal homomorphism.
Then, for every $h \in H$, we have:
\begin{align*}
\gamma_g \circ \psi_1 \circ f (h) = \psi_0 \circ f (h) &= \psi_0\bigl(f(h)\bigr) \\
&= f(h) \cdot \psi_1\bigl(f(h)\bigr) \\
&= \mu\bigl(f(h), f(h)\bigr) \\
&= \mu \circ (f \times f) \circ \Delta_H (h).
\end{align*}
On the other hand, we also have 
\begin{align*}
  \mu \circ (f \times f) \circ j_1
  & = \mu (f, 1) = f = i \circ f 
  \quad\text{and}
  \\
  \mu \circ (f \times f) \circ j_2
  & = \mu(1, f) = \psi_1(f) = \psi_1 \circ f.
\end{align*}
Hence, the K\"unneth Formula (and its naturality) together with the
assumption that $\HH_s(f) = 0$ for all $s \in \{1, \dots, n-1\}$ imply
that the following diagram commutes (similarly to the
mitotic case~\cite[p.~729]{Loeh}):
$$
\xymatrix{
  \HH_n(H \times H; \R) \ar[rr]^-{\HH_n(p_1) \oplus \HH_n(p_2)} \ar[d]_-{\HH_n(f \times f)}
  && \HH_n(H; \R) \oplus \HH_n(H; \R) \ar[d]^-{\HH_n(f) \oplus \HH_n(f)} \\
  \HH_n(\Gamma \times \Gamma; \R) \ar[d]_-{\HH_n(\mu)}
  && \HH_n(\Gamma; \R) \oplus \HH_n(\Gamma; \R) \ar[ll]^-{\HH_n(i_1) + \HH_n(i_2)} \ar[d]^-{\HH_n(i) \oplus \HH_n(\psi_1)} \\
\HH_n(P; \R) && \HH_n(P; \R) \oplus \HH_n(P; \R) \ar[ll]^-{\id + \id},
}
$$
where $p_1, p_2 \colon H \times H \to H$ denote the projections onto the two
factors and $i_1, i_2 \colon \Gamma \to \Gamma \times \Gamma$ the
inclusions of the factors.  The commutativity of the previous diagram
leads to
\begin{align*}
\HH_n(\gamma_g) \circ \HH_n (\psi_1 \circ f) &= \HH_n (\mu \circ (f \times f) \circ \Delta_H) \\
&= \HH_n (i \circ f) + \HH_n(\psi_1 \circ f).
\end{align*}
Since the conjugation~$\gamma_{g}$ is trivial in homology, i.e., $\HH_n(\gamma_g) =
\id$, we obtain
$$
\HH_n(i \circ f) = 0.
$$
We are thus reduced to show that the previous construction can be
controlled in such a way that $i \circ f$ satisfies the required
$\ubc$ condition.

Let $z \in \partial_{n+1}(\CC_{n+1}(H; \R))$. We will construct
a controlled $\partial_{n+1}$-primitive for~$\CC_n(i \circ f)$.
Following the mitotic case~\cite[p.~731]{Loeh}, we have
\begin{equation}\label{eq:f:times:f:E}
(f \times f)_* \circ \Delta_{H*} (z) = (f \times f)_* \circ j_{1*} +
  (f \times f)_* \circ j_{2*} + \partial_{n+1} E'(z),
\end{equation}
on the chain level, where $E'$ is bounded and $\|E'\|$ admits a bound
that only depends on the given~$\kappa \in \R_{>0}$ and~$n$ (the proof
uses hypothesis~(3) in the statement).

To complete the construction of a controlled
$\partial_{n+1}$-primitive for~$\CC_n(i \circ f)$ we consider the
following chain homotopy
\begin{align*}
\Theta_n \colon \CC_n(P; \R) &\to \CC_{n+1}(P; \R)  \\
(g_1, \dots, g_n)
&\mapsto \sum_{j = 1}^{n+1} (-1)^{j+1} \cdot (g_1, \dots, g_{j-1}, g, g^{-1} g_j g, \dots, g^{-1} g_n g)
\end{align*}
between $\CC_*(\gamma_g)$ and the identity. That the previous map is
in fact such a chain homotopy and that $\|\Theta_n\| \leq n+1$ is
proved as in the mitotic case~\cite[Lemma~A.2]{Loeh} (notice that for
convenience we changed the sign of~$\Theta$). 
We then have:
\begin{align*}
  (i \circ f)_*(z)
  &= \bigl(\mu \circ (f \times f) \circ j_1\bigr)_* (z) \\
  &= \bigl(\mu \circ (f \times f) \circ \Delta_H\bigr)_*(z)
  \\
  & \quad - \bigl(\mu \circ (f \times f) \circ j_2\bigr)_*(z) - \mu_* \circ \partial_{n+1} \circ E'(z) \\
  &= (\gamma_g)_* \circ (\psi_1 \circ f)_* (z)
  \\
  & \quad - (\psi_1 \circ f)_*(z) - \partial_{n+1} \circ \mu_* \circ E'(z) \\
&= (\partial_{n+1} \circ  \Theta + \Theta \circ \partial_n) \circ (\psi_1 \circ f)_*(z) - \partial_{n+1} \circ \mu_* \circ E'(z) \\
&= \partial_{n+1} \bigl(\Theta \circ (\psi_1 \circ f)_*(z) - \mu_* \circ E'(z)\bigr),
\end{align*}
where we moved from the first line to the second one using the
formula~\eqref{eq:f:times:f:E} and the last equality holds because $z$
is a cycle. Moreover, using the fact that group homomorphisms induce
norm non-increasing chain maps, we have that the norm
$$
\bigl\|\Theta \circ (\psi_1 \circ f)_*(z) - \mu_* \circ E'(z)\bigr\|
\leq \| \Theta \| + \|E'\|
\leq n+1 + \| E'\|
$$
is bounded from above by a quantity~$c_{n, \kappa} \in \R_{>0}$
depending only on $n$ and~$\kappa$ (since this is true for $E'$ and
$\Theta$).  This shows that $i \circ f$ satisfies $(n, c_{n,
  \kappa})$-$\ubc$, as claimed.
\end{proof}

\section{Computation of cup-powers}\label{appx:eupowers}

In the following, we give the combinatorial part of the proof of
Claim~\ref{claim:eucup} in the proof of
Proposition~\ref{prop:circle}. We use the notation established in the
proof of Proposition~\ref{prop:circle}.

\begin{lemma}
  For all~$k \in \{0,\dots, n/2\}$, we have
  \[ \alt(f_2{}^{\cup k}) = \frac{2^k \cdot k!}{(2k)!} \cdot f_{2k}.
  \]
\end{lemma}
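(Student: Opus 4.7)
The plan is to reduce the identity to a numerical equation via the one-dimensionality of $\linfalt(S^{2k+1})^\Gamma$.

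First, $\alt(f_2^{\cup k})$ is alternating by construction and $\Gamma$-invariant, since $f_2$ is and both cup product and the alternator preserve $\Gamma$-invariance. As $2k$ is even and $|S|\geq n+1\geq 2k+1$, Claim~\ref{claim:alt} tells us that $\linfalt(S^{2k+1})^\Gamma = \R\cdot f_{2k}$. Hence $\alt(f_2^{\cup k}) = c_k\cdot f_{2k}$ for some scalar $c_k\in\R$, and the lemma reduces to the numerical identity $c_k = 2^k k!/(2k)!$.

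To identify $c_k$, I would evaluate both sides on a fixed circularly ordered $(2k+1)$-tuple $(s_0,\dots,s_{2k})$, on which $f_{2k}$ takes the value~$1$. Unwinding the definitions of the alternator and the Alexander--Whitney cup product then yields
\[
  c_k = \frac{1}{(2k+1)!}\sum_{\sigma\in S_{2k+1}}\sgn(\sigma)\prod_{i=0}^{k-1}\epsilon\bigl(\sigma(2i),\sigma(2i+1),\sigma(2i+2)\bigr),
\]
where $\epsilon(a,b,c) \in \{\pm 1\}$ denotes the orientation sign of $(a,b,c)$ with respect to the cyclic order on~$\{0,\dots,2k\}$ (which coincides with $f_2(s_a,s_b,s_c)$ for the circularly ordered reference tuple).

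The main obstacle is the combinatorial evaluation of this signed sum. I would proceed by induction on $k$, with base case $c_1 = 1$ immediate (since $f_2$ is already alternating). For the inductive step, the target is the recursion $c_k = c_{k-1}/(2k-1)$, which together with $c_1=1$ gives the stated closed form. To derive the recursion, I would isolate the rightmost factor $\epsilon(\sigma(2k-2),\sigma(2k-1),\sigma(2k))$ in the product and sum over the positions of the pair $\{\sigma(2k-1),\sigma(2k)\}$ within the cyclic reference order: the two orderings of this pair contribute identically (each flips both $\sgn(\sigma)$ and the sign of the isolated triple), and summing over the ${2k+1 \choose 2}$ unordered choices of the pair together with a careful sign adjustment accounting for the induced cyclic order on the complementary indices reduces the inner signed sum to the expression for $c_{k-1}$. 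The combinatorial bookkeeping of these sign and count contributions -- which may be sanity-checked against the case $k=2$, where the sum evaluates to $40 = 5!/3$ -- produces the factor $1/(2k-1)$ and completes the induction.
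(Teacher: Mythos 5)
Your setup is sound and is essentially the paper's: the reduction via Claim~\ref{claim:alt} to a scalar identity $\alt(f_2^{\cup k}) = c_k\cdot f_{2k}$, the evaluation on a single circularly ordered tuple, the resulting signed sum over $S_{2k+1}$, and the target recursion $c_k = c_{k-1}/(2k-1)$ with base case $c_1=1$ are all exactly what is needed, and your formula for $c_k$ is correct. The gap is the inductive step, which you defer to ``combinatorial bookkeeping'' but which is the entire content of the lemma --- and the specific decomposition you propose does not go through as described. In the Alexander--Whitney product $f_2^{\cup k}$ consecutive factors share an endpoint, so the rightmost factor $\epsilon(\sigma(2k-2),\sigma(2k-1),\sigma(2k))$ involves the index $\sigma(2k-2)$, which is also the last argument of the remaining product $W(\sigma') := \prod_{i=0}^{k-2}\epsilon(\sigma(2i),\sigma(2i+1),\sigma(2i+2))$. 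After summing over the two orderings of the final pair (your observation that they contribute equally is correct) and fixing the unordered pair $\{a,b\}$, the inner sum has the form $\sum_{\sigma'}\sgn(\cdots)\,W(\sigma')\,\epsilon(\sigma'(2k-2),a,b)$; since the weight depends on $\sigma'(2k-2)$, this does \emph{not} factor as the $c_{k-1}$ sum times a sign and a count. To decouple it you would need, for instance, that the partial sums $\sum_{\sigma'\colon\,\sigma'(2k-2)=c}\sgn(\sigma')W(\sigma')$ are independent of $c$ --- not obvious for the raw, non-alternating product $W$ --- and this missing argument is precisely where the factor $1/(2k-1)$ has to come from.

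The paper's proof sidesteps this by running the induction one level up: it reduces to evaluating $\alt(f_2\cup f_{2(k-1)})$, i.e., the second cup factor is the fully alternating $f_{2(k-1)}$ rather than the raw $(k-1)$-fold product. Since $f_{2(k-1)}$ depends on its arguments only through the sign of the sorting permutation, the shared index is harmless: after fixing one entry by cyclic invariance (using that $2k$ is even), each surviving term contributes $f_2(i,j,2k)\cdot\sgn(\sigma) = \sgn(\sigma)$, and a direct count gives $\frac{2}{(2k)!}\cdot k\cdot(2k-2)! = \frac{1}{2k-1}$. I would recommend you either adopt this replacement of $f_2^{\cup(k-1)}$ by its alternation $f_{2(k-1)}$ before isolating a factor, or supply the independence-of-$c$ argument above; as written, the inductive step is not yet a proof.
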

\begin{proof}
  Proceeding inductively, it suffices to show that
  \[ \alt(f_2 \cup f_{2(k-1)}) = \frac1{2k-1} \cdot f_{2k}.
  \]
  This is a purely combinatorial statement. Let $(s_0, \dots, s_{2k})$
  be a circularly ordered $(2k+1)$-tuple over~$S$. In order to
  simplify notation, we will write~$f(i_0,\dots, i_p)$ for~$f(s_{i_0},
  \dots, s_{i_p})$, etc.  In this notation, since $(s_0, \dots,
  s_{2k})$ is circularly ordered, it suffices to show that $A :=
  \alt(f_2 \cup f_{2(k-1)}) (0,\dots, 2k)$ is equal to
  \[ \frac1{2k-1}.
  \]
  By definition of~$\alt$ and the cup-product on simplicial cochains,
  we have 
  \begin{align*}
    A 
  = \frac1{(2k +1)!} \cdot
  \sum_{\sigma \in \Sigma(0,\dots, 2k)} \sgn(\sigma)
  & \cdot f_2\bigl(\sigma(0),\sigma(1),\sigma(2)\bigr)
  \\[-.5em]
  & \cdot f_{2(k-1)} \bigl(\sigma(2), \dots, \sigma(2k) \bigr).
  \end{align*}
  Because $2k$ is even, we can fix one position and obtain via cyclic
  permutations that
  \begin{align*}
    A 
  = \frac{2k+1}{(2k +1)!} \cdot
  \sum_{\sigma \in \Sigma(0,\dots, 2k-1)} \sgn(\sigma)
  & \cdot f_2\bigl(\sigma(0),\sigma(1),2k\bigr)
  \\[-.5em]
  & \cdot f_{2(k-1)} \bigl(2k,\sigma(2), \dots, \sigma(2k-1) \bigr).
  \end{align*}
  Flipping $\sigma(0)$ and $\sigma(1)$ changes both the sign
  of~$\sigma$ and of~$f_2(\sigma(0), \sigma(1),2k)$. Therefore,
  we obtain
  \begin{align*}
    A
  & = \frac{2}{(2k)!} \cdot
  \sum_{i=0}^{2k-2}\sum_{j=i+1}^{2k-1}
  \sum_{\sigma \in \Sigma(X_{i,j})} \sgn([i,j]*\sigma)
  \cdot f_2\bigl(i,j,2k\bigr)
  \cdot f_{2(k-1)} \bigl([2k]*\sigma \bigr)
  \\
  & = \frac{2}{(2k)!} \cdot
  \sum_{i=0}^{2k-2}\sum_{j=i+1}^{2k-1}
  \sum_{\sigma \in \Sigma(X_{i,j})} \sgn([i,j]*\sigma)
  \cdot 1 \cdot \sgn(\sigma).
  \end{align*}
  Here, we use the following notation: $X_{i,j} := \{0,\dots,2k-1\}
  \setminus \{i,j\}$; the permutation/tuple~$[i,j]*\sigma$
  on~$\{0,\dots,2k-1\}$ is obtained by using $i$, $j$ in the first two
  positions and then filling up with~$\sigma$, etc.

  Let $[X_{i,j}]$ be the sequence of elements in~$X_{i,j}$,
  in order. Then
  \begin{align*}
    \sgn(\sigma) \cdot \sgn\bigl([i,j]*\sigma\bigr)
    = \sgn\bigl([i,j] * [X_{i,j}]\bigr)
    = \sgn (j-1) \cdot \sgn(i)
  \end{align*}
  for all~$i \in \{0,\dots,2k-2\}$, $j \in \{i+1,\dots,2k-1\}$;
  here, we set~$\sgn(x)$ of~$x \in \N$ to~$+1$ if $x$ is even,  and
  to~$-1$ if $x$ is odd.
  We distinguish two cases:
  \begin{itemize}
  \item
    If $i$ is odd, then $\sum_{j=i+1}^{2k-1} \sgn(j-1)$ is zero,
    because there are equally many even and odd numbers
    in~$\{i+1, \dots, 2k-1\}$.
  \item
    If $i$ is even, then $\{i+1,\dots, 2k-1\}$ contains one
    more odd number than even numbers, whence
    \[ \sgn(i) \cdot \sum_{j=i+1}^{2k-1} \sgn(j-1)
    = 1.
    \]
  \end{itemize}
  Therefore, since there are $k$ even numbers inside $\{0, \cdots, 2k -2\}$, we obtain 
  \begin{align*}
    A
  & = \frac{2}{(2k)!} \cdot
      k \cdot |X_{i,j}| ! 
    = \frac{2}{(2k)!} \cdot k \cdot (2k-2)!
    = \frac{1}{2k-1}, 
  \end{align*}
  as claimed.
\end{proof}

Similar computations can also be found in the literature~\cite{jekel}.

 
\bibliographystyle{abbrv}
\bibliography{bacbib}

\end{document}